\documentclass[english,a4paper,reqno,11pt]{amsart} 
\usepackage[utf8]{inputenc} 
\usepackage[T1]{fontenc} 
\usepackage{lmodern} 
\usepackage{pdfrender}
\usepackage{amsmath,bm}
\usepackage{amssymb} 
\usepackage{stmaryrd} 
\usepackage{mathtools} 
\usepackage{mathrsfs}  
\usepackage{graphicx} 
\usepackage{xcolor}   
\usepackage[english]{babel} 
\usepackage{amsthm, thmtools} 
\usepackage{caption} 
\usepackage{subcaption}
\usepackage{varioref} 
\usepackage{microtype} 
\usepackage[shortlabels]{enumitem} 
\usepackage{xargs} 
\usepackage[colorinlistoftodos,
	prependcaption,
	textsize=scriptsize
	]
	{todonotes} 
\usepackage{csquotes} 
\usepackage{dsfont} 
\usepackage{bbm} 
\usepackage{tikz,tikz-cd} 
\tikzcdset{scale cd/.style={every label/.append style={scale=#1},
    cells={nodes={scale=#1}}}}
\usetikzlibrary{positioning}
\usepackage{shuffle} 
\usepackage{blkarray} 
\setlist[itemize]{
  nosep,
  align=left,
  leftmargin=0pt,
  labelwidth=1.25em,
  itemindent=1.25em,
  labelsep=0pt,
}
\usepackage[
	doi=false,
	isbn=false,
	backend=biber,
	style=alphabetic,
	maxbibnames=99,
	maxalphanames=99,
	giveninits=true,
	]
	{biblatex} 
\usepackage[hypertexnames=false]{hyperref} 

\addbibresource{biblio.bib}

\newcommandx{\lz}[2][1=]{\todo[inline,linecolor=blue,backgroundcolor=blue!25,bordercolor=blue,#1, author = LORENZO]{#2}} 
\newcommandx{\jdj}[2][1=]{\todo[inline,linecolor=red,backgroundcolor=red!25,bordercolor=red,#1, author = JEAN-DAVID]{#2}} 

\newcommand*{\fatten}[1][.4pt]{%
  \textpdfrender{
    TextRenderingMode=FillStroke,
    LineWidth={\dimexpr(#1)\relax},
  }%
}

\setenumerate[0]{label = \textit{\arabic*.}, ref = \textit{\arabic*.}} 

\numberwithin{equation}{section} 

\newtheorem{prop}{Proposition}[section]
\newtheorem{definition}[prop]{Definition}
\newtheorem{definition-theorem}[prop]{Definition-Theorem}
\newtheorem{lemma}[prop]{Lemma}

\newtheorem{theorem}[prop]{Theorem}
\newtheorem{remark}[prop]{Remark}
\newtheorem{corollary}[prop]{Corollary}

\newcommand{\0}{{\bf 0}}
\newcommand{\1}{\mathbf{1}}
\newcommand{\m}{{\bf m}}

\newcommand{\n}{{\bf n}}

\newcommand{\p}{{\bf p}}
\newcommand{\q}{{\bf q}}
\newcommand{\e}{{\bf e}}

\newcommand{\z}{\mathsf{z}}
\newcommand{\A}{\mathcal{A}}
\newcommand{\Abar}{\overline\A}
\newcommand{\B}{\mathcal{B}}
\newcommand{\Bbar}{\overline{\mathcal{B}}}

\newcommand{\D}{\mathcal{D}}

\newcommand{\N}{\mathbb{N}}

\newcommand{\R}{\mathbb{R}}
\newcommand{\sym}{\mathsf{Sym}}
\newcommand{\tens}{\mathsf{Tens}}

\newcommand{\tors}[1]{\mathbf{T}^{#1}}
\newcommand{\curv}[1]{\mathbf{R}^{#1}}

\newcommand{\env}{\mathcal{U}}
\newcommand{\envU}{\mathcal{U}_{[\cdot,\cdot]}}

\newcommand{\X}{\mathfrak{X}}
\newcommand{\Z}{\mathbb{Z}}

\newcommand{\lb}{\fatten[0.5pt]{[}}
\newcommand{\rb}{\fatten[0.5pt]{]}}
\newcommand{\lp}{\left(}
\newcommand{\rp}{\right)}
\newcommand{\rhohat}{\hat{\rho}}
\newcommand{\otimesbold}{\bm{\otimes}}

\newcommand{\id}{\textnormal{id}}
\newcommand{\End}{\textnormal{End}}
\newcommand{\Der}{\textnormal{Der}}
\newcommand{\Hom}{\textnormal{Hom}}
\newcommand{\conc}{\mathsf{conc}}

\newcommand{\ind}{\mathds{1}}

\newcommand{\tr}{\triangleright}
\newcommand{\td}{\scalebox{1.2}[0.7]{$\Diamond$}}
\newcommand{\la}{\left\langle}
\newcommand{\ra}{\right\rangle}


\newcommand{\writefun}[5]{\ensuremath{\begin{array}[t]{lrcl}
#1 : & #2 & \longrightarrow & #3 \\
    & #4 & \longmapsto & #5 \end{array}}} 

\newcommand{\writefuniso}[5]{\ensuremath{\begin{array}[t]{lrcl}
#1 : & #2 & \overset{\sim}{\longrightarrow} & #3 \\
    & #4 & \longmapsto & #5 \end{array}}} 



\newcommand{\C}{\mathcal{C}}
\newcommand{\ass}{\mathsf{a}}

\newcommand{\M}{\mathcal{M}}

\newcommand{\rhobar}{\overline{\rho}}

\makeatletter
\newcommand{\Scalecenter}[3]{#1{\mathpalette\Scalecenter@{{#2}{#3}}}}
\newcommand{\Scalecenter@}[2]{\Scalecenter@@#1#2}
\newcommand{\Scalecenter@@}[3]{%
  \vcenter{\hbox{\scalebox{#2}{$\m@th#1#3$}}}%
}
\newcommand{\btr}{%
  \Scalecenter{\mathord}{0.75}{\,\blacktriangleright\,}%
}
\newcommand{\ssum}{%
  \DOTSB\Scalecenter{\mathop}{0.75}{\sum}\slimits@
}

\makeatother

\title[Geometric post-Lie deformations of post-Lie algebras.]{Geometric post-Lie deformations of post-Lie algebras and regularity structures.}
\author{Jean-David JACQUES}
\address{Institut für Mathematik, Universität Potsdam, Germany}
\email{jean-david.jacques@uni-potsdam.de}
\date{\today}
%
%
\keywords{pre \& post-Lie algebra, deformation, Regularity Structures differential geometry, derivation, torsion, curvature}

\begin{document}

\begin{abstract} In order to derive a class of geometric-type deformations of post-Lie algebras, we first extend the geometrical notions of torsion and curvature for a general bilinear operation on a Lie algebra, then we derive compatibility conditions which will ensure that the post-Lie structure remains preserved.\\
This type of deformation applies in particular to the post-Lie algebra introduced in \cite{jacques2023post} in the context of regularity structures theory. We use this deformation to derive a pre-Lie structure for the regularity structures approach given in \cite{LOT}, which is isomorphic to the post-Lie algebra studied in \cite{jacques2023post} at the level of their associated Hopf algebras.\\
In the case of sections of smooth vector bundles of a finite-dimensional manifold, this deformed structure contains also, as a subalgebra, the post-Lie algebra structure introduced in \cite{munthe2013post} in the geometrical context of moving frames.
\end{abstract}

\maketitle

\tableofcontents

\section{Introduction}
Recent developments in the rough path theory and later more broadly in the theory of regularity structures \cite{Hai14} has led to an intensive study of some preexisting old algebraic structures, such as pre-Lie algebras, and to explain their relations to graded Hopf algebras as the Connes-Kreimer algebra or the Grossmann-Larson algebra, but also to develop new ones suitable for the purpose of renormalization of SPDEs.\\
As in the seminal article on the article of regularity structures theory \cite{Hai14}, the Picard iteration method for expressing solutions to SPDEs involves constructing tree-like nested iterated integrals against convolution kernels. , which renders a B-series-like sums over trees (see \cite{Butcher} for B-series). Operations on B-series, such as substitution, can be described using a coproduct on the free vector space generated by rooted trees \cite{CMEF}—namely, the renowned Butcher–Connes–Kreimer coproduct \cite{connes1999hopf}. The dual of this coproduct, known as the Grossman–Larson product \cite{grossmanlarson}, is closely related to the \textit{grafting pre-Lie algebra of rooted trees} \cite{Hoffman} which turns out to be the free pre-Lie algebra \cite{CL}. In the context of regularity structures, however, the underlying algebraic framework is richer, as the trees involved carry various types of decorations. This has lead to a multi-pre-Lie algebra structure \cite{foissy2021algebraic} \cite{bruned2019algebraic} \cite{bruned2023algebraic}.\\

We recall first some basic notations and definitions:
\begin{definition}
    Given a bilinear operation $\ast$ on a vector space $L$, we denote
    \begin{itemize}
    \item $[\cdot,\cdot]_\ast$ the \textbf{commutator} of $\ast$, defined as:
    \[[x,y]_\ast:=x\ast y-y\ast x, \qquad  x,y\in L\]
    \item $\ass_\ast$ the \textbf{associator} of $\ast$, defined as:
    \[\ass_\ast(x,y,z):=x\ast (y\ast z)-(x\ast y)\ast z,  \qquad x,y,z\in L\]
    \end{itemize} 
\end{definition}

\begin{definition}\label{def: pre-Lie}
A (left) \textbf{pre-Lie algebra} $(L,\tr,)$ is a vector space $L$ endowed with a bilinear operation $\tr:L\otimes L\rightarrow L$ which satisfies for all $x,y,z\in L$ the following equality:
\begin{equation}\label{eq: pre-Lie equality}
    \ass_\tr(x,y,z)=\ass_\tr(y,x,z)
\end{equation}
\end{definition}
This last identity \eqref{eq: pre-Lie equality} ensures that the commutator $[\cdot,\cdot]_\tr$ satisfies the Jacobi identity, and hence $(L,[\cdot,\cdot]_\tr)$ is a Lie algebra.

\begin{definition}\label{def: post-Lie}
A (left) \textbf{post-Lie algebra} $(L,\tr,[\cdot,\cdot])$ is a vector space $L$ endowed with two bilinear operations $\tr,[\cdot,\cdot]:L\otimes L\rightarrow L$ which satisfy for all $x,y,z\in L$ the following conditions:
\begin{enumerate}
    \item $[\cdot,\cdot]$ is a Lie bracket
    \item $\tr$ is a derivation on $(L,[\cdot,\cdot])$, that is to say:
     \begin{equation}\label{eq: post-Lie derivation condition}
        x\tr[y,z]=[x\tr y,z]+[y,x\tr z]
    \end{equation}
    \item \begin{equation}\label{eq: post-Lie associator condition}
        [x,y]\tr z=\ass_\tr(x,y,z)-\ass_\tr(y,x,z)
    \end{equation}
\end{enumerate}
\end{definition}
Post-Lie algebras form a category that includes both Lie and pre-Lie algebras as subcategories. Indeed:
\begin{itemize}
    \item a Lie algebra $(L,[\cdot,\cdot])$ can be seen as a post-Lie algebra $(L,\tr,[\cdot,\cdot])$ with null post-Lie product $\tr\equiv 0$,
    \item a pre-Lie algebra $(L,\tr)$ can be seen as a post-Lie algebra $(L,\tr,[\cdot,\cdot])$ with null Lie bracket $[\cdot,\cdot]\equiv 0$.
\end{itemize}~

Originally, the notion of pre-Lie algebras appeared simultaneously in two different approaches: in Gerstenhaber \cite{gerstenhaber1963cohomology} for Hochschild cohomology and in Vinberg \cite{vinberg1963} for differential geometry to handle tangential vector fields. Later, the more general notion of post-Lie algebra was first discovered by B.~Vallette \cite[\S 4.3.3]{vallette2007homology} in the context of
purely operadic questions related to Koszul dualization of the commutative trialgebra operad. The free post-Lie algebra has been first described in \cite{munthe2003enumeration} (before the formalisation of post-Lie algebras) and then later given as the free Lie algebra over the free magma over a set of indices in \cite{munthe2013post}, in which paper, the authors also observed that in the case of classical differential geometry, some hypothesis on the connection on a smooth finite dimensional manifold turn the connection along with the torsion into a post-Lie structure on the tangent bundle of the manifold.
We refer to the books \cite{lee2018introduction} and \cite{nakahara2018geometry} for the theory of smooth differential manifolds and to \cite{curry2017post} for relations between post-Lie algebras and Lie group integration.

Recently, the notion of post-Lie algebra has got a significant impact on the theory of regularity structures and especially concerning the tree-free approach of \cite{LOT}, in which the authors studied a class of semi-linear (S)PDEs on $\R^2$ of the type:
\begin{equation}\label{eq: semi-linear}
    \mathcal{L}(u)=a(u)\,\xi,
\end{equation}
where, $\xi$ is a rough driver (typically the gaussian white noise), $\mathcal{L}$ is a linear differential operator, and $a$ is a real function called the \textit{non-linearity}. The main tool for building the structure group are \textit{multiindices}, which are compactly supported functions $\beta:\N\times \N^2\raisebox{0.2pt}{\footnotesize{$\setminus\{0_2\}$}} \to \N$. In that context, the free polynomial algebra $\A=\R[\{\z^\beta\}_\beta]$, with multiplication $\z^{\beta}\cdot\z^{\beta'}=\z^{\beta+\beta'}$, is used as base space, along with a set of derivations on $\A$, which are of two types: the \textit{tilt derivations} $\{\partial_i\}_{i\in\{1,2\}}$, and the \textit{shift derivations} $\{D^{\n}\}_{\n\in \N^2}$.

In \cite{bruned2022post} post-Lie relations in the context of \cite{LOT} were derived and in \cite{jacques2023post} a strong algebraic framework has been built allowing for an extensive study. The algebraic framework developed in \cite{jacques2023post} is based on a post-Lie algebra structure on $\A\otimes \Der(\A)$ (which remain valid for every commutative and associative algebra $\A$). Its canonical nature doesn't require any extra hypothesis and it avoids the need to define an $\A$-free module structure on a subspace of $\Der(\A)$. The bracket of that post-Lie algebra should encode the non-commutativity of the derivations which is the case for regularity structures, the tilt and shift derivations don't commutes with each other.\\

The goal of that paper is twofold:
\begin{itemize}[align=parleft]
    \item Defining a class of deformation of post-Lie algebras, called \textbf{geometric post-Lie deformations}, abbreviated \textbf{gpL-deformations}, by defining for post-Lie structures $(\tr,[\cdot,\cdot])$ a linear perturbation by a bilinear operation $\td$, aiming to a new structure $(\tr+\td,[\cdot,\cdot]-[\cdot,\cdot]_{\diamond})$. Certain compatibility conditions between $\td$ and $(\tr,[\cdot,\cdot])$, will ensure that $(L,\tr+\td,[\cdot,\cdot]-[\cdot,\cdot]_{\diamond})$ forms again a post-Lie algebra. An interesting particularity is that the original post-Lie algebra and its gpL-deformation have their associated envelopping Hopf algebras, which are isomorphic.\\
    GpL-deformations of the post-Lie algebra of \cite{jacques2023post} give a whole class of post-Lie algebra structures on $\A\otimes\Der(\A)$. In particular, it permits to establish a close relation between the post-Lie structure in \cite{jacques2023post} and the one in \cite{munthe2013post} in the case where $L=\mathfrak X(\M)$ is the space of smooth vector bundle on a (finite dimensional) manifold $\M$.\\
    Note that recently in \cite{bruned2025post} (and independently of this present work), the authors were interested in similar deformations of pre-Lie algebras $(L,\tr)$ in which the pre-Lie product is perturbed additively by a bilinear operation $\omega$, leading to a post-Lie algebra $(L,\tr+\omega,\pi)$, where $\pi$ denotes a Lie bracket on $L$, the couple $(\omega,\pi)$ being characterised as a Maurer-Cartan element of a certain differential graded Lie algebra, see \cite[Theorem 2.5]{bruned2025post}.
    \item Constructing a pre-Lie algebra structure on the regularity structures space $L\subset \R[\{\z_k,\z_\n\}]\otimes \Der(\R[\{\z_k,\z_\n\}])$, as a $\td$-gpL-deformation of the post-Lie algebra of \cite{jacques2023post}, where $\td$ should be defined and studying its \textit{structure group} $G$, along with the recentering maps: $\{\Gamma_f^{\btr}\}_{f\in G}$.
\end{itemize}~\

The paper is organized as follows:\\
In section \ref{sec: torsion and curvature}, for sake of generality, we adopt a broad point of view starting with a general Lie algebra $(L,[\cdot,\cdot])$ endowed with a bilinear operation $\td$. In that context, we give an algebraic definition of the notion of \textit{torsion} and \textit{curvature} of $\td$ on $(L,[\cdot,\cdot])$.
A proof of the first Bianchi identity is provided in Appendix Section \ref{subsec: Appendix proofs}, in the most general setting of a connection that may have non-vanishing torsion. This is included for the sake of completeness, as references in the current literature are rather scarce and difficult to locate, and since this identity is the key to derive conditions for post-Lie structure in Theorem \ref{th: geometric post-Lie algebra}. Then we study gpL-deformation of post-Lie algebras in Theorem \ref{th: deformation post-Lie} by deriving compatibility conditions. We have relegated to the Appendix Subsection \ref{subsec: PL deformation in coordinates} the study of gpL deformation in coordinates for a fixed basis, where in Proposition \ref{prop: conditions coordinates} we give the polynomial equations characterising all possible gpL deformations.\\
In section \ref{sec: deformation post-Lie alg of derivations}, we apply the results of the preceding section to the the post-Lie algebra of derivations of \cite{jacques2023post} in Theorem \ref{theo: post-Lie structure from derivations}. This amounts to define the class of deformations in that case. Note that in the Appendix subsection \ref{subsec: appendix M-K--L post-Lie alg}, given a finite dimensional smooth manifold, we specialize our results for the algebra $\A=\C^\infty(\M,\R)$ endowed with the pointwise product, given a connection $\nabla$ on the smooth sections of the tangent bundle $\X(\M)$, which is flat and has constant torsion. In that context, we can identify the Munthe-Kaas-Lundervold (M-K--L) post-Lie algebra $(\mathfrak X(\M),\nabla,-\tors{\nabla,[\cdot,\cdot]_J})$ as a sub post-Lie algebra of $(\mathcal C^\infty(\M,\R)\otimes \mathfrak X(\M),\btr,\lb\cdot,\cdot\rb)$, with $\btr:=\tr+\nabla$ which is the $\nabla$-gpL-deformation of the post-Lie algebra $(\A\otimes\Der(\A),\tr,[\cdot,\cdot])$ of \cite{jacques2023post}.\\
Finally, in section \ref{sec: regularity structures} we apply the a gpL-deformation to the post-Lie algebra structure $L\subset \A\otimes \Der(\A)$ studied in \cite{jacques2023post} and given here in Theorem \ref{theo: post-Lie algebra structure for L} where $L$ is given by \eqref{eq: sub-post-Lie algebra L} in order to derive a pre-Lie algebra structure $(L,\tr+\td)$ that is adapted for regularity structures, in the sense that key finiteness conditions of Propositions \ref{prop: finiteness btr} and \ref{prop: finiteness representation} are satifyed, allowing to define the structure group and recentering maps.\\

The following diagram elucidates the logical structure of this paper (in plain arrows), where the only dashed arrow indicates a link explained in Subsection \ref{subsec: appendix M-K--L post-Lie alg}

\resizebox{13 cm}{13 cm}{
\begin{tikzpicture}[>=stealth,every node/.style={shape=rectangle,draw,rounded corners},]
    \node (c0) {\begin{tabular}{l} Torsion and curvature of $\td$\\ on Lie algebras $(L,[\cdot,\cdot])$\end{tabular}};
    \node (c1) [below =of c0]{\begin{tabular}{l} GpL-deformations\\ for Lie algebras \\ $(L,\td,-\tors{\td,[\cdot,\cdot]})$\end{tabular}};
    \node (c2) [below left=of c1]{\begin{tabular}{l} GpL-deformations\\ for post-Lie algebras \\$(L,\btr,\lb\cdot,\cdot\rb)$\end{tabular}};
    \node (c2') [right=of c2]{\begin{tabular}{l} Hopf algebra \\$(\env_{\lb\cdot,\cdot\rb}(L),\star_{\btr},\Delta)$\end{tabular}};
    \node (c3) [below =of c2]{\begin{tabular}{l} GpL-deformations\\ for the J--Z post-Lie alg. \\when $L=\A\otimes \Der(\A)$\end{tabular}};
    \node (c4) [below =of c3]{\begin{tabular}{l}Application to R.S.:\\ Pre-Lie alg. structure\\ $(L,\btr)$, $L\subset\A\otimes \Der(\A)$\\ with $\A=\R[\{\z_k,\z_\n\}]$\end{tabular}};
    \node (c6) [right=of c2']{\begin{tabular}{l} GpL-deformations\\ for $(\Der(\A),[\cdot,\cdot]_\circ)$:\\ $(\Der(\A),\td,-\mathbf{T}^{\nabla,[\cdot,\cdot]_\circ})$\end{tabular}};
    \node (c7) [below =of c6]{\begin{tabular}{l} M-K--L post-Lie alg.\\$(\X(\M),\nabla,-\mathbf{T}^{\nabla,[\cdot,\cdot]_J})$\end{tabular}};
    \node (c3') [right=of c4]{\begin{tabular}{l}Representation $\rhobar_{\btr}$\\ of $\env_{\lb\cdot,\cdot\rb}(L)$ on $\End(\A)$ \\ with $L\subset\A\otimes\Der(\A)$ \end{tabular}};
    \node (c5) [below=of c3']{\begin{tabular}{l} Recentering maps $\{\Gamma_f^{\btr}\}_{f\in G}$ \\  on $\Abar=\R[\{\z_k,\z_\n\}]$  \end{tabular}};
    \node (c3'')[right=of c3']{\begin{tabular}{l}Dual Hopf algebra\\ $(\env_{\lb\cdot,\cdot\rb}(L),\ast,\Delta_{\star_{\btr}})$
    \\and character group\\$(G,\star_{\btr},\ind)$\end{tabular}};
\draw[->] (c0.south) to (c1.north);
\draw[->] (c1.south west) to (c2.north);
\draw[->] (c2.south) to (c3);
\draw[->] (c2'.south) to (c3');
\draw[->] (c3.south) to (c4);
\draw[->] (c4.south east) to (c5);
\draw[->] (c3'.south) to (c5);
\draw[->] (c3''.south west) to (c5);
\draw[->] (c1.south east) to (c6.north);
\draw[->] (c6) -- (c7);
\draw[->] (c2.east) -- (c2');
\draw[->] (c3) -- (c3');
\draw[<->,dashed] (c3) to[bend right=10] (c7.west);
\draw[-latex] (c4.north east) to[bend left=20] (c3''.north west);
\draw[->] (c2'.south) to[bend right=17] (c3''.north);
\draw[->] (c6.south west) to (c3.east);
\end{tikzpicture}}

{\bf Acknowledgements}. The author kindly thanks Lorenzo Zambotti, Dominique Manchon, Kurusch Ebrahimi-Fard, Sylvie Paycha, Fabrizio Zanello and Martin Geller for insightful discussions on the topic of this paper.

\bigskip

\section{Torsion and curvature of a bilinear operation on a Lie algebra and conditions for post-Lie algebra structure.}\label{sec: torsion and curvature}
\subsection{Torsion and curvature of a bilinear operation on a Lie algebra.}

The aim of this first section is to generalize the geometric notions of torsion and curvature for general Lie algebras, endowed with a bilinear operation. We note that an algebraic approach of the notion of torsion and curvature has been used in \cite{gavriliov2008special} in the context of \textit{framed Lie algebras}.\\

Throughout this section, we adopt a broad perspective by considering an arbitrary Lie algebra $(L,[\cdot,\cdot])$. The term \textit{connection} on $L$ will refer to a bilinear operation:
\[\writefun{\td}{L\otimes L}{L}{(x,y)}{x\td y}\]
In differential geometry, when $L$ is the space of tangent vector fields on a smooth manifold, this operation is typically denoted $\nabla_{x}y$. However, following the notation used in \cite{gavriliov2008special}, we will use $x\td y$ instead, as it improves readability of the computations, without inducing any confusion.

\begin{definition}[\textbf{Torsion and Curvature tensors on a Lie algebra}]\label{def: torsion and curvature} Given a bilinear operation $\td\in \Hom(L^{\otimes 2},L)$, we associate two maps:
    \begin{itemize}
        \item $\textbf{T}^{\td,[\cdot,\cdot]}\in \Hom(\wedge^2 L,L)$ called \textbf{the torsion of} $\td$ \textbf{on} $(L,[\cdot,\cdot])$.
        \item $\textbf{R}^{\td,[\cdot,\cdot]} \in \Hom(L^{\otimes 3},L)$ called \textbf{the curvature of} $\td$ \textbf{on} $(L,[\cdot,\cdot])$.
    \end{itemize}
    which are defined by:
    \begin{align}
        \tors{\td,[\cdot,\cdot]}(x,y)&:=[x,y]_{\diamond}-[x,y]\label{eq: torsion}\\
        \curv{\td,[\cdot,\cdot]}(x,y,z)&:=x\td (y\td z)-y\td (x\td z)-[x,y]\td z\label{eq: curvature}
    \end{align}
    \end{definition}
We also remark by an easy computation, that there is an identity linking the curvature $\curv{\td,[\cdot,\cdot]}$, the torsion $\tors{\td,[\cdot,\cdot]}$, and the associator $\ass_{\td}$ of $\td$ which is given by:
\begin{equation}\label{eq: curvature vs torsion}
    \curv{\td,[\cdot,\cdot]}(x,y,z)=\ass_{\td}(x,y,z)-\ass_{\td}(y,x,z)+ \tors{\td,[\cdot,\cdot]}(x,y) \td z
\end{equation}

These definitions match with the geometric case where $\M$ is a finite dimensional smooth manifold and $(L,[\cdot,\cdot]):=(\Der(\C^\infty(\M,\R),[\cdot,\cdot]_\circ)$ is endowed with a connection $\nabla:\X(\M)\otimes \X(\M)\to \X(\M)$, we obtain the same notion of torsion and curvature.

Here we give in our context the classical notion of the covariant derivative of a $2$-fold tensor:

\begin{definition}[\textbf{Covariant derivative of a 2-fold tensor}]
    Let $\tors{}:L\otimes L\to L$ be a bilinear map. The $\td$-covariant derivative along $x\in L$, is the 2-fold tensor
    \[\writefun{\td \tors{}}{L^{\otimes 3}}{L}{x\otimes y\otimes z}{(x \td \tors{})(y,z)}\]
    defined by:
    \begin{equation}\label{eq: covariant derivative torsion}
        (x \td \tors{})(y,z):=x\td\left(\tors{}(y,z)\right) - \tors{}(x\td y,z) - \tors{}(y,x\td z)
    \end{equation}
    In particular, this definition will be applyed for the torsion: $\tors{}=\tors{\td,[\cdot,\cdot]}$
\end{definition}

\begin{remark}
    To define the notions of torsion and curvature, it is not necessary to assume that $[\cdot,\cdot]$ is a Lie bracket. However, we make this assumption from the begining, as $(L, [\cdot,\cdot])$ will be treated as a Lie algebra throughout the remainder of the paper.
\end{remark}

\subsection{Geometric post-Lie algebra deformation of a Lie algebra.}

We remind here the first Bianchi identity in our algebraical context, which is the key identity to understand the post-Lie conditions in a geometric perspective. A proof of it, inspired by \cite{nomizu1956lie}, can be found in the Appendix section \ref{sec: Appendix}. In order to write synthetically the first Bianchi identity, we introduce the following notation:

For a $3$-fold operator $\mathbf{A}\in\Hom(L^{\otimes 3},L)$, we denote using the symbol $\mathfrak{S}$ the operator on $\Hom(L^{\otimes 3},L)$ which sums over the cyclic permutations of the ordered set $(x,y,z)$, that is to say:
    \[\mathfrak{S}(\mathbf{A}(x,y,z))=\mathbf{A}(x,y,z)+\mathbf{A}(z,x,y)+\mathbf{A}(y,z,x)\]
\begin{lemma}[\textbf{The first Bianchi identity}]\label{lem: the first Bianchi identity}
Let $(L,[\cdot,\cdot])$ be a Lie algebra and let $\td:L\otimes L\to L$ be a bilinear operation. The following Bianchi equality is satisfied:
    \begin{equation}\label{eq: bianchi}
        \mathfrak{S}\left(\tors{\td,[\cdot,\cdot]}(\tors{\td,[\cdot,\cdot]} (x,y), z )\right)= \mathfrak{S}\left(\curv{\td,[\cdot,\cdot]}(x, y, z)\right) - \mathfrak{S}\left((x\td\tors{\td,[\cdot,\cdot]})(y,z)\right)
    \end{equation}
where we denoted again $\tors{\td,[\cdot,\cdot]}$ and $\curv{\td,[\cdot,\cdot]}$ respectively the torsion and the curvature defined by \eqref{eq: torsion} and  \eqref{eq: curvature}.
\end{lemma}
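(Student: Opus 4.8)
The plan is to prove the Bianchi identity \eqref{eq: bianchi} by a direct expansion, using the definitions \eqref{eq: torsion}, \eqref{eq: curvature}, \eqref{eq: covariant derivative torsion} together with the Jacobi identity for $[\cdot,\cdot]$. The cleanest route is to work with the identity \eqref{eq: curvature vs torsion}, which already expresses $\curv{\td,[\cdot,\cdot]}$ in terms of associators of $\td$ and the torsion, so that everything gets reduced to $\td$ and $[\cdot,\cdot]$ alone. First I would write out all three terms of \eqref{eq: bianchi} after applying $\mathfrak S$, expanding each occurrence of $\tors{\td,[\cdot,\cdot]}$, $\curv{\td,[\cdot,\cdot]}$ and $(x\td\tors{\td,[\cdot,\cdot]})$ into the ``$\td$-part'' and the ``$[\cdot,\cdot]$-part'' (recall $[x,y]_{\diamond}=x\td y-y\td x$). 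The key bookkeeping observation is that $\mathfrak S$ is invariant under cyclic relabelling of $(x,y,z)$, so any term of the form $\mathfrak S(\mathbf A(x,y,z)+\mathbf A(y,x,z))$ can be reorganised; this is what makes the purely-$\td$ contributions cancel.

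Concretely, I would split the computation into two independent parts according to which bracket/operation appears at the outermost level. For the terms that involve only $\td$: after expanding, the left-hand side contributes a cyclic sum of iterated $\td$-products like $x\td(y\td z)$, and the curvature terms contribute $\mathfrak S(\ass_\td(x,y,z)-\ass_\td(y,x,z))$ plus $\mathfrak S(\tors{\td,[\cdot,\cdot]}(x,y)\td z)$ via \eqref{eq: curvature vs torsion}; the covariant-derivative terms $\mathfrak S((x\td\tors{\td,[\cdot,\cdot]})(y,z))$ contribute (after using \eqref{eq: covariant derivative torsion} and \eqref{eq: torsion}) both $\td$-only pieces and $[\cdot,\cdot]$-pieces. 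I expect the purely associator-type $\td$-terms to cancel term by term after cyclic reindexing, leaving an equality among the $[\cdot,\cdot]$-pieces. For the $[\cdot,\cdot]$-part, the left-hand side has $\mathfrak S([[x,y],z])$-type contributions hidden inside $\tors{\td,[\cdot,\cdot]}\circ\tors{\td,[\cdot,\cdot]}$, which vanishes exactly by the Jacobi identity; the remaining mixed terms (one $\td$, one bracket) should match those coming from the $\td$-covariant derivative of the torsion on the right-hand side. Since the Lemma is stated without any hypothesis beyond $[\cdot,\cdot]$ being a Lie bracket, the Jacobi identity is the only structural input, and I would make sure it is invoked precisely once, at the step where $\mathfrak S([[x,y],z])=0$.

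The main obstacle is purely combinatorial rather than conceptual: keeping track of signs and of which of the three cyclic copies each monomial belongs to, so that the cancellations are transparent. To control this I would adopt a normal form — e.g.\ always reduce a term to one of a fixed list of ``shapes'' ($x\td(y\td z)$, $(x\td y)\td z$, $[x,y]\td z$, $x\td[y,z]$, $[[x,y],z]$, $[x\td y,z]$, and their images under cyclic permutation) and record its coefficient in $\{+1,-1\}$ — and then verify that left-hand side minus right-hand side collapses to $0$ shape by shape, with the single use of Jacobi absorbing the $[[x,y],z]$ shapes. Because the excerpt says the full proof is relegated to Appendix Section \ref{sec: Appendix} (following \cite{nomizu1956lie}), here I would present only this strategy and the organising identity \eqref{eq: curvature vs torsion}, and defer the line-by-line verification; the takeaway is that \eqref{eq: bianchi} is an identity in the free ``Lie algebra with a bilinear operation'', true for formal reasons once Jacobi is imposed, and in particular it does not require $\td$ to be torsion-free or flat.
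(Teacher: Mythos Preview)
Your strategy is correct and would work: expanding everything into the atomic ``shapes'' you list and checking that the coefficients cancel (with the single invocation of Jacobi killing the $\mathfrak S([[x,y],z])$ shape) is a valid proof, and your remark that the identity holds freely once Jacobi is imposed is exactly the point.

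The paper's proof follows the same underlying idea --- direct computation, cyclic relabelling, one use of Jacobi --- but is organised differently and never fully atomises the expression. Instead of expanding both sides into your list of shapes, it starts from the defining equation $\tors{}(x,y)=x\td y - y\td x - [x,y]$, applies the operator $\tors{}(\cdot,z)$ to both sides, and then cyclically sums. The resulting right-hand side is then regrouped at the level of $\tors{}$ itself: the two terms $\tors{}(x\td y,z)+\tors{}(y,x\td z)$ are recognised (via \eqref{eq: covariant derivative torsion}) as $x\td\tors{}(y,z)-(x\td\tors{})(y,z)$, and the remaining piece $x\td\tors{}(y,z)+\tors{}(x,[y,z])$ is expanded just enough to expose $\mathfrak S([x,[y,z]])$ (killed by Jacobi) and the defining combination of $\curv{}$. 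The upshot is that the paper avoids your full shape-by-shape bookkeeping by keeping $\tors{}$ unexpanded as long as possible and using cyclic relabelling inside $\mathfrak S$ to make the covariant derivative and curvature appear directly; your approach trades that structural economy for a more mechanical verification, which is perfectly fine but longer to write out.
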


As a direct corollary of the first Bianchi identity, we have the following Theorem, which give sufficient conditions on the connection $\td$ to ensure the geometric Post-Lie deformation on $(L,[\cdot,\cdot])$:
\begin{theorem}[Geometric post-Lie deformation of a Lie algebra]\label{th: geometric post-Lie algebra} Let $(L,[\cdot,\cdot])$ be a Lie algebra and let $\td:L\otimes L\to L$ be a bilinear operation with:
\begin{itemize}
    \item null curvature $\curv{\td,[\cdot,\cdot]}\equiv 0$,
    \item constant torsion $\td \tors{\td,[\cdot,\cdot]}\equiv 0$,
\end{itemize}
then $(L,\td,-\tors{\td,[\cdot,\cdot]})=(L,\td,[\cdot,\cdot]-[\cdot,\cdot]_{\diamond})$ is a post-Lie algebra, which is called the \textbf{$\td$-geometric post-Lie deformation of $(L,[\cdot,\cdot])$}, abbreviated \textbf{$\td$-gpL deformation of $(L,[\cdot,\cdot])$}.
\end{theorem}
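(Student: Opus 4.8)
The plan is to verify directly the three axioms of Definition \ref{def: post-Lie} for the pair of operations $(\td, -\tors{\td,[\cdot,\cdot]})$ on $L$, using the hypotheses $\curv{\td,[\cdot,\cdot]}\equiv 0$ and $\td\tors{\td,[\cdot,\cdot]}\equiv 0$ together with the first Bianchi identity (Lemma \ref{lem: the first Bianchi identity}) and the structural identity \eqref{eq: curvature vs torsion}. Write $\btr := \td$ and $\lb x,y\rb := -\tors{\td,[\cdot,\cdot]}(x,y) = [x,y] - [x,y]_{\diamond}$ for brevity; note that $\lb\cdot,\cdot\rb$ is automatically antisymmetric since the torsion is, by \eqref{eq: torsion}, a map on $\wedge^2 L$.

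First I would establish axiom (1), that $\lb\cdot,\cdot\rb$ is a Lie bracket. Antisymmetry is immediate, so only the Jacobi identity remains, i.e. $\mathfrak{S}\bigl(\lb \lb x,y\rb, z\rb\bigr)=0$, equivalently $\mathfrak{S}\bigl(\tors{}(\tors{}(x,y),z)\bigr)=0$ where $\tors{}=\tors{\td,[\cdot,\cdot]}$. This is exactly the left-hand side of the Bianchi identity \eqref{eq: bianchi}; under the hypotheses, its right-hand side is $\mathfrak{S}(\curv{\td,[\cdot,\cdot]}(x,y,z)) - \mathfrak{S}((x\td\tors{})(y,z)) = 0 - 0 = 0$. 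Hence Jacobi holds and (1) follows.

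Next, axiom (3), the associator condition $\lb x,y\rb\td z = \ass_\td(x,y,z)-\ass_\td(y,x,z)$. By \eqref{eq: curvature vs torsion} we have $\ass_\td(x,y,z)-\ass_\td(y,x,z) = \curv{\td,[\cdot,\cdot]}(x,y,z) - \tors{}(x,y)\td z$. Since the curvature vanishes, the right-hand side equals $-\tors{}(x,y)\td z = \lb x,y\rb \td z$, which is precisely the left-hand side; so (3) is a direct consequence of flatness and is essentially the reason \eqref{eq: curvature vs torsion} was recorded.

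Finally, axiom (2), the derivation property $x\td\lb y,z\rb = \lb x\td y, z\rb + \lb y, x\td z\rb$. Unwinding the definition of $\lb\cdot,\cdot\rb$ and of the covariant derivative \eqref{eq: covariant derivative torsion}, this is equivalent to $(x\td\tors{})(y,z) = 0$, which is exactly the ``constant torsion'' hypothesis $\td\tors{\td,[\cdot,\cdot]}\equiv 0$; so (2) holds as well. I expect the main (modest) obstacle to be bookkeeping in the derivation step: one must carefully expand $x\td\lb y,z\rb = x\td([y,z]-[y,z]_{\diamond})$ and recognize, after using bilinearity of $\td$, that the difference between $x\td\lb y,z\rb$ and $\lb x\td y,z\rb + \lb y,x\td z\rb$ collapses exactly to $-(x\td\tors{})(y,z)$ as defined in \eqref{eq: covariant derivative torsion} — the sign conventions and the fact that $[\cdot,\cdot]_{\diamond}$ is not itself a bracket require a little care, but no genuine difficulty. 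Having checked (1), (2), (3), the triple $(L,\td,-\tors{\td,[\cdot,\cdot]})$ is a post-Lie algebra, completing the proof; the identification with $(L,\td,[\cdot,\cdot]-[\cdot,\cdot]_\diamond)$ is just the definition \eqref{eq: torsion} of the torsion.
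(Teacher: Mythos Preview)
Your proposal is correct and follows essentially the same approach as the paper: the derivation axiom is obtained by unwinding the constant-torsion hypothesis via \eqref{eq: covariant derivative torsion}, the associator axiom comes from \eqref{eq: curvature vs torsion} together with flatness, and the Jacobi identity follows from the first Bianchi identity \eqref{eq: bianchi} once both hypotheses kill its right-hand side. The only difference is the order in which you verify the three axioms, which is immaterial.
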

\begin{proof}
    Let $x,y,z\in L$. By definition of the covariant derivative of the torsion \eqref{eq: covariant derivative torsion}, setting $x\td\tors{\td,[\cdot,\cdot]}=0$, one gets the following equality:
    \[x\td\tors{\td,[\cdot,\cdot]}(y,z)=\tors{\td,[\cdot,\cdot]}(y,x\td z)+\tors{\td,[\cdot,\cdot]}(x\td y,z)\]
    Then, if the curvature is null $\curv{\td,[\cdot,\cdot]}\equiv 0$, we get from the equality \eqref{eq: curvature vs torsion} that:
    \[-\tors{\td,[\cdot,\cdot]}(x,y) \td z=\ass_{\td}(x,y,z)-\ass_{\td}(y,x,z)\]
    Finally, the null curvature and constant torsion hypothesis make the right-hand side term of the first Bianchi equality \eqref{eq: bianchi} vanish and we get:
    \[\mathfrak{S} \left(\tors{\td,[\cdot,\cdot]}\lp\tors{\td,[\cdot,\cdot]} (x,y),z\rp\right)=0\]
    which is the Jacobi equality. Hence $-\tors{\td,[\cdot,\cdot]}:=[\cdot,\cdot]-[\cdot,\cdot]_{\diamond}$ is a Lie bracket.
\end{proof}

\begin{remark}
    A pre-Lie algebra $(L,\tr)$ is the $\tr$-gpL deformation of the Lie algebra $(L,[\cdot,\cdot]_{\tr})$.
\end{remark}

In a pre-Lie algebra $(L, \tr)$, the commutator $[\cdot,\cdot]_\tr$ satisfies the Jacobi identity, and therefore defines a Lie bracket. In contrast, in a post-Lie algebra $(L,\tr, [\cdot,\cdot])$, the commutator $[\cdot,\cdot]_\tr$ does not, in general, satisfy the Jacobi identity. Nevertheless, the following result holds:
\begin{prop}[\cite{ebrahimi2014lie}] \label{prop: composition Lie bracket}
Let $(L,\tr,[\cdot,\cdot])$ be a post-Lie algebra. The bilinear operation $\llbracket \cdot,\cdot \rrbracket:L\otimes L\rightarrow L$ given by:
\begin{equation} \label{eq: composition Lie bracket}
    \llbracket\cdot,\cdot \rrbracket: = [\cdot,\cdot]_\tr + [\cdot,\cdot]
\end{equation}
is a Lie bracket.
\end{prop}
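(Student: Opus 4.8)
The plan is a direct verification. Antisymmetry of $\llbracket\cdot,\cdot\rrbracket$ is immediate, as both $[\cdot,\cdot]_\tr$ and $[\cdot,\cdot]$ are antisymmetric, so the only point to check is the Jacobi identity $\mathfrak{S}\big(\llbracket\llbracket x,y\rrbracket,z\rrbracket\big)=0$. I would begin by setting $u:=\llbracket x,y\rrbracket=x\tr y-y\tr x+[x,y]$ and expanding
\[
\llbracket u,z\rrbracket=u\tr z-z\tr u+[u,z]
\]
by bilinearity. This produces three groups of terms: (i) the iterated $\tr$-products $(x\tr y)\tr z$, $(y\tr x)\tr z$, $z\tr(x\tr y)$, $z\tr(y\tr x)$; (ii) the terms coupling $\tr$ with $[\cdot,\cdot]$, namely $[x,y]\tr z$, $z\tr[x,y]$, $[x\tr y,z]$, $[y\tr x,z]$; and (iii) the iterated bracket $[[x,y],z]$.

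Next I would apply $\mathfrak{S}$ and simplify group by group. Group (iii) contributes $\mathfrak{S}([[x,y],z])$, which vanishes by the Jacobi identity for the Lie bracket $[\cdot,\cdot]$. In group (ii) I would rewrite $[x,y]\tr z$ via the post-Lie associator condition \eqref{eq: post-Lie associator condition} as $\ass_\tr(x,y,z)-\ass_\tr(y,x,z)$, and rewrite $z\tr[x,y]$ via the derivation condition \eqref{eq: post-Lie derivation condition} as $[z\tr x,y]+[x,z\tr y]$. After these substitutions the surviving terms split into a purely iterated-$\tr$ part — group (i) together with the associator expansion of $[x,y]\tr z$ — and a part consisting entirely of brackets of a $\tr$-product with a single element, namely the cyclic sum of $[x\tr y,z]-[y\tr x,z]-[z\tr x,y]-[x,z\tr y]$. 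In the first part the contributions $\pm(x\tr y)\tr z$ and $\pm(y\tr x)\tr z$ coming from $u\tr z$ cancel against those coming from the associator expansion, leaving $\mathfrak{S}\big(x\tr(y\tr z)-y\tr(x\tr z)-z\tr(x\tr y)+z\tr(y\tr x)\big)$, which vanishes because in this cyclic sum each monomial $a\tr(b\tr c)$ that occurs does so once with sign $+$ and once with sign $-$. In the second part one uses $[a,b]=-[b,a]$ to pair the six summands obtained after applying $\mathfrak{S}$, and they cancel as well.

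Collecting the three groups gives $\mathfrak{S}\big(\llbracket\llbracket x,y\rrbracket,z\rrbracket\big)=0$, which is the assertion. The argument uses only the two post-Lie axioms and the Jacobi identity for $[\cdot,\cdot]$; the one point that demands care is bookkeeping — tracking signs, keeping straight which cyclic permutation carries a given term to which, and invoking $[a,b]=-[b,a]$ at the right moments — so the main (and only) obstacle is clerical rather than conceptual. As consistency checks, the degenerate cases $\tr\equiv 0$ and $[\cdot,\cdot]\equiv 0$ reduce the statement respectively to the Jacobi identity for $[\cdot,\cdot]$ and to the fact that the commutator of a pre-Lie product is a Lie bracket.
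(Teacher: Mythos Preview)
Your direct verification is correct: the antisymmetry is immediate, and your expansion of $\mathfrak{S}\big(\llbracket\llbracket x,y\rrbracket,z\rrbracket\big)$ into the three groups, together with the use of the two post-Lie axioms \eqref{eq: post-Lie derivation condition}--\eqref{eq: post-Lie associator condition} and the Jacobi identity for $[\cdot,\cdot]$, indeed makes all terms cancel as you describe. Note, however, that the paper does not supply its own proof of this proposition --- it simply cites the result from \cite{ebrahimi2014lie} --- so there is nothing to compare against; your argument is precisely the standard direct computation one finds in that reference.
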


\begin{remark}
    Note that any post-Lie algebra $(L,\tr,[\cdot,\cdot])$, can be seen as the $\tr$-gpL-deformation of the Lie algebra $(L,\llbracket\cdot,\cdot \rrbracket)$, indeed in that case: 
    \[-\tors{\tr,\llbracket\cdot,\cdot \rrbracket}=\llbracket\cdot,\cdot \rrbracket - [\cdot,\cdot]_\tr= [\cdot,\cdot]\]
\end{remark}

\subsection{The case of the Lie algebra of derivations on a commutative and associative algebra.}
Let $(\A,\cdot)$ be an associative and commutative algebra. The space of derivations, denoted $\Der(\A,\cdot)$ (or simply $\Der(\A)$ when no confusions occur), is the subspace of all $D\in\End(\A)$ satisfying the following Leibniz rule:
\begin{equation}
    D(a\cdot b)=D(a)\cdot b + a\cdot D(b)
\end{equation}
which formula can be generalised inductively for all $a_1,\ldots,a_n\in\A$ into:
\begin{equation*}
    D(a_1\cdots a_n)=\sum_{i=1}^n a_1\cdots D(a_i)\cdots a_n.
\end{equation*}
By associativity of the compositon product $\circ$, we have that $[\cdot,\cdot]_\circ$ verifies the Jacobi identity, and we have also that it stabilizes $\Der(\A)$, that is to say:
\[[D_1,D_2]_\circ(a \cdot b)=[D_1,D_2]_\circ(a)\cdot b + a\cdot [D_1,D_2]_\circ(b),\qquad a,b\in\A\]
which proves that $[D_1,D_2]_\circ\in \Der(\A)$.\\

A particular case of interest where the last Theorem \ref{th: geometric post-Lie algebra} applies is when $(L,[\cdot,\cdot])=(\Der(\A),[\cdot,\cdot]_\circ)$. We obtain in that case a post-Lie algebra structure, given in the following corollary:

\begin{corollary}[of Theorem \ref{th: geometric post-Lie algebra}]
    Let $\A$ be a commutative and associative algebra, and let $\td:\Der(\A)\otimes \Der(\A)\to \Der(\A)$ be a bilinear operation. If $\td$ has null curvature $\curv{\td,[\cdot,\cdot]_\circ}\equiv0$ and constant torsion $\td \tors{\td,[\cdot,\cdot]_\circ}\equiv0$ on the Lie algebra $(\Der(\A),[\cdot,\cdot]_\circ)$, then  $(\Der(\A),\td,[\cdot,\cdot]_\circ-[\cdot,\cdot]_{\diamond})$ is a post-Lie algebra.
\end{corollary}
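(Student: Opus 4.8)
The plan is to recognize the statement as an immediate specialization of Theorem \ref{th: geometric post-Lie algebra}, obtained by taking the ambient Lie algebra to be $(L,[\cdot,\cdot])=(\Der(\A),[\cdot,\cdot]_\circ)$. So the only real work is to certify that this pair genuinely is a Lie algebra, so that the theorem may legitimately be invoked.

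First I would recall, exactly as in the discussion preceding the statement, that the commutator $[\cdot,\cdot]_\circ$ of the associative composition product on $\End(\A)$ satisfies the Jacobi identity, and that the Leibniz-rule computation shows $[D_1,D_2]_\circ$ remains a derivation whenever $D_1,D_2\in\Der(\A)$; hence $(\Der(\A),[\cdot,\cdot]_\circ)$ is a Lie algebra. Next, with the bilinear operation $\td:\Der(\A)\otimes\Der(\A)\to\Der(\A)$ playing the role of the connection, I would observe that the two hypotheses assumed here, $\curv{\td,[\cdot,\cdot]_\circ}\equiv 0$ and $\td\,\tors{\td,[\cdot,\cdot]_\circ}\equiv 0$, are verbatim the null-curvature and constant-torsion conditions required by Theorem \ref{th: geometric post-Lie algebra}. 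Applying that theorem then yields directly that $(\Der(\A),\td,-\tors{\td,[\cdot,\cdot]_\circ})=(\Der(\A),\td,[\cdot,\cdot]_\circ-[\cdot,\cdot]_{\diamond})$ is a post-Lie algebra, which is the asserted conclusion.

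I expect essentially no obstacle: the statement is a corollary in the strict sense. The one point deserving a word of care is closure within $\Der(\A)$ — that $\td$ maps into $\Der(\A)$ is part of the hypothesis, and that the deformed bracket $[\cdot,\cdot]_\circ-[\cdot,\cdot]_{\diamond}$ again takes values in $\Der(\A)$ follows because both $[\cdot,\cdot]_\circ$ and the commutator $[\cdot,\cdot]_{\diamond}$ of $\td$ do. Beyond that there is nothing to prove that is not already contained in Theorem \ref{th: geometric post-Lie algebra}.
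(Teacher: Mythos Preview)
Your proposal is correct and matches the paper's approach exactly: the corollary is stated without proof precisely because it is the direct specialization of Theorem~\ref{th: geometric post-Lie algebra} to $(L,[\cdot,\cdot])=(\Der(\A),[\cdot,\cdot]_\circ)$, the Lie-algebra status of which is established in the discussion immediately preceding the statement. Your additional remark on closure of the deformed bracket in $\Der(\A)$ is a harmless extra check, but not something the paper bothers to spell out.
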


In differential geometry, given a finite dimensional smooth manifold $\M$, and considering the (commutative and associative) algebra of smooth maps on the manifold $\C^\infty(\M,\R)$, endowed with the pointwise product defined as
\[(f\cdot g)(m)=f(m)g(m),\qquad m\in\M;\]
the space of smooth tangential vector fields $\X(\M)$ can be defined as a space of derivation (see for example \cite[Proposition 8.15]{lee2003smooth}):
\[\X(\M):=\Der(\C^\infty(\M,\R)).\]
Hence, considering the particular case othe the last Corollary where $\A:=\C^\infty(\M,\R)$ and $\Der(\A):=\X(\M)$, endowed with a connection \[\nabla:\X(\M)\otimes \X(\M)\to \X(\M),\] we obtain as a direct corollary the following:
\begin{corollary}[Munthe-Kaas Lundervolt conditions for Post-Lie structure \cite{munthe2013post}]\label{theo: munthe-kaas lundervold} Given finite dimensional smooth manifold $\M$ and a connection $\nabla$ on the smooth sections of the tangential vector bundle $\X(\M)$, if $\nabla$ is flat and has constant torsion, then denoting $[\cdot,\cdot]_J$ the Jacobi-Lie bracket of vector fields, we obtain that $(\X(\M),\nabla,-\mathbf{T}^{\nabla,[\cdot,\cdot]_J})$ is a post-Lie algebra.
\end{corollary}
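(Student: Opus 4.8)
The plan is to obtain this statement as the verbatim specialization of the preceding Corollary of Theorem~\ref{th: geometric post-Lie algebra} to the commutative and associative algebra $\A:=\C^\infty(\M,\R)$ equipped with the pointwise product, via the classical identification $\X(\M)=\Der(\C^\infty(\M,\R))$ recalled above; note that no finite-dimensionality of $\X(\M)$ as a vector space is needed, only that of $\M$, which enters through that identification. The whole content of the argument is then to check that, under this identification, the geometric hypotheses and the conclusion match word for word the algebraic data appearing in that Corollary.

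First I would record that, under $\X(\M)=\Der(\C^\infty(\M,\R))$, the associative commutator bracket $[\cdot,\cdot]_\circ$ of derivations corresponds exactly to the Jacobi--Lie bracket $[\cdot,\cdot]_J$ of vector fields: this is the standard characterization of $[X,Y]_J$ as the unique vector field acting on $\C^\infty(\M,\R)$ as the derivation $X\circ Y-Y\circ X$, and I would simply cite it (e.g.\ \cite{lee2003smooth}). Thus $(\X(\M),[\cdot,\cdot]_J)=(\Der(\C^\infty(\M,\R)),[\cdot,\cdot]_\circ)$ as Lie algebras.

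Next I would match the tensors. With the classical definitions of the torsion and curvature of the affine connection $\nabla$,
\[
T(X,Y)=\nabla_X Y-\nabla_Y X-[X,Y]_J,\qquad
R(X,Y)Z=\nabla_X\nabla_Y Z-\nabla_Y\nabla_X Z-\nabla_{[X,Y]_J}Z,
\]
and the covariant derivative of the $(1,2)$-tensor $T$,
\[
(\nabla_X T)(Y,Z)=\nabla_X\!\big(T(Y,Z)\big)-T(\nabla_X Y,Z)-T(Y,\nabla_X Z),
\]
substituting $[\cdot,\cdot]_\circ=[\cdot,\cdot]_J$ shows that $T$ is precisely the torsion $\tors{\nabla,[\cdot,\cdot]_J}$ of \eqref{eq: torsion} (with sign convention $\tors{\nabla,[\cdot,\cdot]_J}=T$), that $R(\cdot,\cdot)\cdot$ is precisely the curvature $\curv{\nabla,[\cdot,\cdot]_J}$ of \eqref{eq: curvature}, and that $\nabla_{\cdot}T$ is precisely the covariant derivative $\nabla\tors{\nabla,[\cdot,\cdot]_J}$ of \eqref{eq: covariant derivative torsion}.

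It then suffices to translate the hypotheses: $\nabla$ being \emph{flat} means $R\equiv 0$, i.e.\ $\curv{\nabla,[\cdot,\cdot]_J}\equiv 0$, and $\nabla$ having \emph{constant} (parallel) torsion means $\nabla T\equiv 0$, i.e.\ $\nabla\tors{\nabla,[\cdot,\cdot]_J}\equiv 0$. Hence both hypotheses of the preceding Corollary, applied to $\A=\C^\infty(\M,\R)$, are satisfied, and it delivers that $(\X(\M),\nabla,[\cdot,\cdot]_J-[\cdot,\cdot]_\diamond)=(\X(\M),\nabla,-\tors{\nabla,[\cdot,\cdot]_J})$ is a post-Lie algebra, which is the claim. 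I do not anticipate a genuine obstacle here: the argument is pure bookkeeping, and the only points deserving attention are the identification of the vector-field bracket with the derivation commutator and the consistent matching of the sign convention for the torsion.
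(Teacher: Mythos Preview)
Your proposal is correct and follows exactly the paper's approach: the paper presents this statement as a ``direct corollary'' of the preceding Corollary of Theorem~\ref{th: geometric post-Lie algebra} by specializing to $\A=\C^\infty(\M,\R)$ and $\Der(\A)=\X(\M)$, without writing out the bookkeeping you spell out. Your added details (matching $[\cdot,\cdot]_\circ$ with $[\cdot,\cdot]_J$, and the algebraic torsion/curvature with the geometric ones) are precisely the content implicit in the paper's one-line justification.
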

\medskip

\subsection{Geometric post-Lie (gpL) deformation of a post-Lie algebra.}

In that subsection, we define a class of deformations for post-Lie algebras and we derive sufficient compatibility conditions for the deformation to be a post-Lie algebra.

Let us begin with a theorem that is central in our paper and that is the starting point of the study of gpL-deformations applying for the general setting of post-Lie algebras, which is a generalisation of the previous Theorem \ref{th: geometric post-Lie algebra}:
\begin{theorem}\label{th: deformation post-Lie}
    Let $(L,\tr,[\cdot,\cdot])$ be a post-Lie algebra and let $\td:L\otimes L \to L$ be a bilinear operation. If the following compatibility conditions are satisfied:
    \begin{itemize}
        \item Compatibility between $\td$ and $[\cdot,\cdot]$: 
        \[\td\tors{\td,[\cdot,\cdot]}\equiv 0\qquad \text{and}\qquad \curv{\td,[\cdot,\cdot]}\equiv 0\]
        \item Compatibility between $\td$ and $\tr$:
        \begin{equation}\label{eq: compatibility derivation}
            x \tr\cdot ~\in\Der(L,\td)
        \end{equation}
    \end{itemize}
    then $(L,\btr,\lb\cdot,\cdot\rb)$ is a post-Lie algebra, with the notations:
    \begin{equation}\label{eq: notations}
        \btr:=\tr+\td\quad \text{and}\quad \lb\cdot,\cdot\rb:=[\cdot,\cdot] - [\cdot,\cdot]_{\diamond}=-\tors{\td,[\cdot,\cdot]}
    \end{equation} 
    We say that $(L,\btr,\lb\cdot,\cdot\rb)$ is the {\bf$\td$-gpL-deformation} of $(L,\tr,[\cdot,\cdot])$
\end{theorem}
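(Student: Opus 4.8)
The plan is to verify directly the three axioms of Definition \ref{def: post-Lie} for the pair $(\btr,\lb\cdot,\cdot\rb)$ with $\btr=\tr+\td$ and $\lb\cdot,\cdot\rb=-\tors{\td,[\cdot,\cdot]}=[\cdot,\cdot]-[\cdot,\cdot]_{\diamond}$, using Theorem \ref{th: geometric post-Lie algebra} as a black box for everything that involves $\td$ alone. Since $\td$ has null curvature and constant torsion on $(L,[\cdot,\cdot])$, that theorem already asserts that $(L,\td,-\tors{\td,[\cdot,\cdot]})$ is itself a post-Lie algebra. This immediately gives axiom (1) --- $\lb\cdot,\cdot\rb$ is a Lie bracket --- and also supplies two facts I will reuse: (a) $x\td\cdot$ is a derivation of $\lb\cdot,\cdot\rb$, and (b) $\lb x,y\rb\td z=\ass_{\td}(x,y,z)-\ass_{\td}(y,x,z)$ (which is just \eqref{eq: curvature vs torsion} with vanishing curvature).

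For axiom (2) I would show that $x\btr\cdot$ is a derivation of $\lb\cdot,\cdot\rb$ by splitting $\btr=\tr+\td$. The $\td$-part is fact (a). For the $\tr$-part: $x\tr\cdot$ is a derivation of $[\cdot,\cdot]$ by the post-Lie axiom of $(L,\tr,[\cdot,\cdot])$; and it is a derivation of $[\cdot,\cdot]_{\diamond}$ because, writing $[y,z]_{\diamond}=y\td z-z\td y$ and applying the compatibility hypothesis \eqref{eq: compatibility derivation} to $x\tr(y\td z)$ and to $x\tr(z\td y)$, one gets $x\tr[y,z]_{\diamond}=[x\tr y,z]_{\diamond}+[y,x\tr z]_{\diamond}$. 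Subtracting, $x\tr\cdot$ is a derivation of $\lb\cdot,\cdot\rb=[\cdot,\cdot]-[\cdot,\cdot]_{\diamond}$; adding the two parts yields axiom (2).

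Axiom (3) is the computational heart, and the step I expect to cost the most care. I would expand $\ass_{\btr}(x,y,z)$ by bilinearity of $\btr=\tr+\td$ into four pieces: $\ass_{\tr}(x,y,z)$, $\ass_{\td}(x,y,z)$, the mixed term $x\tr(y\td z)-(x\tr y)\td z$, and the mixed term $x\td(y\tr z)-(x\td y)\tr z$. The first mixed term collapses, by \eqref{eq: compatibility derivation}, to $y\td(x\tr z)$; the second is kept as is. Antisymmetrizing in $x\leftrightarrow y$, the surviving mixed contributions $y\td(x\tr z)$ and $x\td(y\tr z)$ cancel against their swaps, leaving
\begin{align*}
\ass_{\btr}(x,y,z)-\ass_{\btr}(y,x,z) &=\bigl(\ass_{\tr}(x,y,z)-\ass_{\tr}(y,x,z)\bigr)\\
&\quad+\bigl(\ass_{\td}(x,y,z)-\ass_{\td}(y,x,z)\bigr)-[x,y]_{\diamond}\tr z.
\end{align*}
Then I would use post-Lie axiom (3) for $\tr$ on the first bracket (giving $[x,y]\tr z$), fact (b) on the second (giving $\lb x,y\rb\td z$), and the identity $[x,y]-[x,y]_{\diamond}=\lb x,y\rb$, so that the right-hand side becomes $\lb x,y\rb\tr z+\lb x,y\rb\td z=\lb x,y\rb\btr z$, which is precisely axiom (3).

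The only genuine obstacle is the bookkeeping in axiom (3): one must recognize that the compatibility hypothesis \eqref{eq: compatibility derivation} is exactly what is needed to make the two mixed $\tr$--$\td$ associator terms cancel after antisymmetrization, whereas the Bianchi-type input (null curvature forcing $\lb x,y\rb\td z=\ass_{\td}(x,y,z)-\ass_{\td}(y,x,z)$, and constant torsion) is delivered ready-made by Theorem \ref{th: geometric post-Lie algebra}. Everything else is routine bilinear unwinding of the definitions \eqref{eq: notations}.
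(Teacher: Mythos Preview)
Your proposal is correct and follows essentially the same route as the paper: axiom (1) via the Bianchi input (which the paper cites directly, whereas you package it through Theorem \ref{th: geometric post-Lie algebra}); axiom (2) by splitting $\btr=\tr+\td$ and showing $x\tr\cdot\in\Der(L,[\cdot,\cdot]_{\diamond})$ from \eqref{eq: compatibility derivation} (the paper's Lemma \ref{lem: Der alg. ==> Der Lie alg.}) together with $x\td\cdot\in\Der(L,\lb\cdot,\cdot\rb)$ from constant torsion; and axiom (3) by the same four-term expansion of $\ass_{\btr}$ with the mixed terms collapsing to $-[x,y]_{\diamond}\tr z$ after antisymmetrization (the paper's Lemma \ref{lemma: curvature post-Lie identity}). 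The only difference is organizational: you invoke Theorem \ref{th: geometric post-Lie algebra} as a black box for facts (a) and (b), while the paper unpacks those same facts into separate lemmas.
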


\begin{remark}
    In the setting of the previous Theorem \ref{th: deformation post-Lie}, applying the Theorem \ref{th: geometric post-Lie algebra} to the Lie algebra $(L,[\cdot,\cdot])$, we also have that $(L,\td,\lb\cdot,\cdot\rb)$ is a post-Lie algebra, giving a second post-Lie structure on $L$ with the same Lie bracket component $\lb\cdot,\cdot\rb$.
\end{remark}
 
\begin{remark}
    GpL-deformations for pre-Lie algebras $(L,\tr)$ are of the form $(L,\tr+\td,-[\cdot,\cdot]_{\diamond})$ and they are therefore of the same type as the deformations in \cite{bruned2025post}[Theorem 2.4] where $\omega=\td$ and $\pi=-[\cdot,\cdot]_{\diamond}$.
\end{remark}


For sake of clarity, we split the proof of the Theorem in the three following Lemmas, whose proofs are relegated to the appendix section for greater clarity:
\begin{lemma}\label{lem: Der alg. ==> Der Lie alg.} Let $(L,\tr,\td)$ be a linear space, endowed with two bilinear operation $\tr,\td\in \Hom(L^{\otimes 2},L)$. For all $x\in L$:
    $$ x\tr \cdot \in \Der(L,\td) \Rightarrow  x\tr\cdot \in \Der(L,[\cdot,\cdot]_{\diamond})$$
\end{lemma}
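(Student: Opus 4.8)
The plan is to unwind the definition of $[\cdot,\cdot]_{\diamond}$ as the commutator of $\td$, and to exploit the derivation hypothesis $x\tr\cdot\in\Der(L,\td)$ directly. Recall that by the notation introduced for the commutator of a bilinear operation, we have $[y,z]_{\diamond}=y\td z-z\td y$ for all $y,z\in L$. So what must be shown is that, for a fixed $x\in L$,
\[
x\tr[y,z]_{\diamond}=[x\tr y,z]_{\diamond}+[y,x\tr z]_{\diamond}.
\]

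First I would expand the left-hand side using bilinearity of $\tr$ and the definition of $[\cdot,\cdot]_{\diamond}$:
\[
x\tr[y,z]_{\diamond}=x\tr(y\td z)-x\tr(z\td y).
\]
Then I would apply the hypothesis $x\tr\cdot\in\Der(L,\td)$, which states precisely that $x\tr(a\td b)=(x\tr a)\td b+a\td(x\tr b)$ for all $a,b\in L$. Applying this to each of the two terms gives
\[
x\tr[y,z]_{\diamond}=(x\tr y)\td z+y\td(x\tr z)-(x\tr z)\td y-z\td(x\tr y).
\]
Finally I would regroup: $(x\tr y)\td z-z\td(x\tr y)=[x\tr y,z]_{\diamond}$ and $y\td(x\tr z)-(x\tr z)\td y=[y,x\tr z]_{\diamond}$, which is exactly the derivation identity for $[\cdot,\cdot]_{\diamond}$.

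There is no real obstacle here: the statement is essentially the observation that an operator which is a derivation of a bilinear product $\td$ is automatically a derivation of the associated commutator bracket, and the proof is a two-line rearrangement once the definitions are written out. The only thing to be careful about is bookkeeping of signs when expanding $[z,y]_{\diamond}=z\td y-y\td z$ versus $[y,z]_{\diamond}=y\td z-z\td y$, and confirming that both occurrences of the derivation property are applied to the correct pair of arguments. I would present the computation as a single chain of equalities and conclude.
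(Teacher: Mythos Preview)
Your proof is correct and follows essentially the same approach as the paper: expand $[y,z]_{\diamond}$ as $y\td z - z\td y$, apply the derivation hypothesis to each term, and regroup into $[x\tr y,z]_{\diamond}+[y,x\tr z]_{\diamond}$. The paper presents this as a single chain of equalities, exactly as you suggest.
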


\begin{lemma}[\textbf{The constant torsion hypothesis}]\label{lemma: constant torsion post-Lie identity} Let $(L,\tr,[\cdot,\cdot])$ be a post-Lie algebra, and let $(L,\btr,\lb\cdot,\cdot\rb)$ be defined as in Theorem \ref{th: deformation post-Lie}. If the conditions $\td\tors{\td,[\cdot,\cdot]}\equiv 0$ and \eqref{eq: compatibility derivation} are satisfied, then we have the following equality for all $x,y,z\in L$:
\begin{equation}\label{eq: post-Lie geometric derivation alg. Leibniz}
    x\tr\lb y,z\rb=\lb x\tr y,z\rb+\lb y,x\tr z\rb
\end{equation}
\end{lemma}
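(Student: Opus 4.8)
The plan is to verify that $x\tr\cdot$ is a derivation of $\lb\cdot,\cdot\rb$ by treating $\lb\cdot,\cdot\rb=[\cdot,\cdot]-[\cdot,\cdot]_{\diamond}$ one summand at a time. For the bracket $[\cdot,\cdot]$ there is nothing new to prove: the second post-Lie axiom \eqref{eq: post-Lie derivation condition} of $(L,\tr,[\cdot,\cdot])$ states precisely that $x\tr\cdot\in\Der(L,[\cdot,\cdot])$. For the commutator $[\cdot,\cdot]_{\diamond}$ of $\td$, I would invoke the compatibility hypothesis \eqref{eq: compatibility derivation}, namely $x\tr\cdot\in\Der(L,\td)$, and feed it into Lemma \ref{lem: Der alg. ==> Der Lie alg.}, whose conclusion is exactly $x\tr\cdot\in\Der(L,[\cdot,\cdot]_{\diamond})$.

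To finish, I would use the elementary fact that for a fixed $D\in\End(L)$ the bilinear operations $m\in\Hom(L^{\otimes 2},L)$ for which $D$ obeys the Leibniz rule $D(m(y,z))=m(Dy,z)+m(y,Dz)$ form a linear subspace of $\Hom(L^{\otimes 2},L)$; applied to $D=x\tr\cdot$ this shows that, being simultaneously a derivation of $[\cdot,\cdot]$ and of $[\cdot,\cdot]_{\diamond}$, the map $x\tr\cdot$ is a derivation of their difference $\lb\cdot,\cdot\rb$, which is the asserted identity \eqref{eq: post-Lie geometric derivation alg. Leibniz}. If one prefers to avoid citing Lemma \ref{lem: Der alg. ==> Der Lie alg.}, the same computation can be done by hand: expand $\lb y,z\rb=-\tors{\td,[\cdot,\cdot]}(y,z)=[y,z]-y\td z+z\td y$ from \eqref{eq: torsion}, apply $x\tr\cdot$ termwise using \eqref{eq: post-Lie derivation condition} on $[y,z]$ and \eqref{eq: compatibility derivation} on $y\td z$ and $z\td y$, and regroup the six resulting terms into $\lb x\tr y,z\rb+\lb y,x\tr z\rb$.

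I do not expect a genuine obstacle here: the argument is a short piece of bookkeeping, and the only points that require a little attention are the sign conventions ($[\cdot,\cdot]_{\diamond}$ is the commutator of $\td$ and $\lb\cdot,\cdot\rb=[\cdot,\cdot]-[\cdot,\cdot]_{\diamond}$) and the precise form of the covariant-derivative definition \eqref{eq: covariant derivative torsion} used inside Lemma \ref{lem: Der alg. ==> Der Lie alg.}. It is worth noting that in this particular step only the post-Lie structure of $(L,\tr,[\cdot,\cdot])$ and the hypothesis \eqref{eq: compatibility derivation} actually enter; the constant-torsion condition $\td\tors{\td,[\cdot,\cdot]}\equiv 0$ is not needed for \eqref{eq: post-Lie geometric derivation alg. Leibniz} itself. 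Its role in the proof of Theorem \ref{th: deformation post-Lie} is the complementary one: rewritten through \eqref{eq: covariant derivative torsion} it says exactly that $x\td\cdot\in\Der(L,\lb\cdot,\cdot\rb)$, so combining it with the present lemma gives $x\btr\cdot=x\tr\cdot+x\td\cdot\in\Der(L,\lb\cdot,\cdot\rb)$, the derivation axiom of the deformed post-Lie algebra.
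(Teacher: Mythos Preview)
Your argument is correct and follows the same route as the paper: use the post-Lie axiom \eqref{eq: post-Lie derivation condition} for $[\cdot,\cdot]$, invoke Lemma \ref{lem: Der alg. ==> Der Lie alg.} (fed by \eqref{eq: compatibility derivation}) for $[\cdot,\cdot]_{\diamond}$, and combine linearly to handle $\lb\cdot,\cdot\rb=[\cdot,\cdot]-[\cdot,\cdot]_{\diamond}$.

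Your closing remark is also on point and worth highlighting. As written, identity \eqref{eq: post-Lie geometric derivation alg. Leibniz} involves $\tr$, and for that identity the constant-torsion hypothesis $\td\tors{\td,[\cdot,\cdot]}\equiv 0$ is indeed superfluous. The paper's own proof, however, does not stop there: after reaching the $\tr$ version it invokes the constant-torsion condition (rewritten via \eqref{eq: covariant derivative torsion} as $x\td\cdot\in\Der(L,\lb\cdot,\cdot\rb)$) and continues to the identity $x\btr\lb y,z\rb=\lb x\btr y,z\rb+\lb y,x\btr z\rb$, which is precisely the derivation axiom needed in the proof of Theorem \ref{th: deformation post-Lie}. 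This strongly suggests that the $\tr$'s in the displayed equation \eqref{eq: post-Lie geometric derivation alg. Leibniz} are typos for $\btr$. You have in fact anticipated this: your last paragraph gives exactly the missing step (add the $\td$-contribution using constant torsion), so your write-up already covers both the literal statement and its intended $\btr$ form.
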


\begin{lemma}[\textbf{Curvature identity}]\label{lemma: curvature post-Lie identity} Let $(L,\tr,[\cdot,\cdot])$ be a post-Lie algebra, and let $(L,\btr,\lb\cdot,\cdot\rb)$ be defined as in Theorem \ref{th: deformation post-Lie}. If the compatibility condition \eqref{eq: compatibility derivation} is satisfied, then we have the following identity for all $x,y,z\in L$:
\begin{equation}
    \ass_{\btr}(x,y,z)-\ass_{\btr}(y,x,z)=\lb x,y\rb \btr z + \curv{\td,[\cdot,\cdot]}[x,y,z]
\end{equation}
\end{lemma}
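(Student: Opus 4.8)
The plan is to compute $\ass_{\btr}(x,y,z)-\ass_{\btr}(y,x,z)$ by expanding $\btr=\tr+\td$ bilinearly into its four bidegree-$(\tr,\td)$ components and recombining them. Writing $x\btr(y\btr z)-(x\btr y)\btr z$ with $\btr=\tr+\td$ produces, after taking the antisymmetrization in $(x,y)$, the sum of four antisymmetrized pieces: the pure $\tr$-part $\ass_{\tr}(x,y,z)-\ass_{\tr}(y,x,z)$, the pure $\td$-part $\ass_{\td}(x,y,z)-\ass_{\td}(y,x,z)$, and two mixed parts. The pure $\tr$-part equals $[x,y]\tr z$ by the post-Lie associator axiom \eqref{eq: post-Lie associator condition}. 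The pure $\td$-part, by identity \eqref{eq: curvature vs torsion}, equals $\curv{\td,[\cdot,\cdot]}(x,y,z)-\tors{\td,[\cdot,\cdot]}(x,y)\td z=\curv{\td,[\cdot,\cdot]}(x,y,z)+\lb x,y\rb\td z$, using the notation $\lb\cdot,\cdot\rb=-\tors{\td,[\cdot,\cdot]}$ from \eqref{eq: notations}.

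Next I would treat the two mixed terms. The mixed terms are $\bigl(x\tr(y\td z)-x\td(y\tr z)\bigr)-\bigl((x\tr y)\td z+(x\td y)\tr z\bigr)$ antisymmetrized over $(x,y)$; schematically one collects all monomials with one $\tr$ and one $\td$. The key input here is the compatibility hypothesis \eqref{eq: compatibility derivation}, namely that $x\tr\cdot$ is a derivation of $\td$: $x\tr(y\td z)=(x\tr y)\td z+y\td(x\tr z)$. Substituting this (and the $(x,y)$-swapped version) into the mixed terms should cause a cancellation, leaving exactly the contribution $\lb x,y\rb\tr z$. Indeed, after cancellation the mixed leftovers should reduce to $[x,y]_{\diamond}\tr z$ with a sign, and combining with the pure $\tr$-part $[x,y]\tr z$ gives $([x,y]-[x,y]_{\diamond})\tr z=\lb x,y\rb\tr z$; together with the $\lb x,y\rb\td z$ coming from the pure $\td$-part, this assembles into $\lb x,y\rb\btr z$, and the curvature term $\curv{\td,[\cdot,\cdot]}(x,y,z)$ survives untouched, which is the claimed identity.

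The main obstacle I expect is bookkeeping: there are eight terms from each $\ass_{\btr}$ after expanding, so sixteen before antisymmetrization, and one must carefully track which of them reorganize via the derivation property \eqref{eq: compatibility derivation} versus which reorganize via the post-Lie axiom \eqref{eq: post-Lie associator condition}. A clean way to manage this is to write $\ass_{\btr}=\ass_{\tr}+\ass_{\td}+\Phi$, where $\Phi(x,y,z):=x\tr(y\td z)+x\td(y\tr z)-(x\tr y)\td z-(x\td y)\tr z$ collects the mixed part, and then show $\Phi(x,y,z)-\Phi(y,x,z)=-[x,y]_{\diamond}\tr z$ using only \eqref{eq: compatibility derivation}; this isolates the one genuinely computational lemma. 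Note that we do \emph{not} need the constant-torsion or null-curvature hypotheses here — only the derivation compatibility \eqref{eq: compatibility derivation} — consistent with the statement of Lemma \ref{lemma: curvature post-Lie identity}. Finally I would collect the three pieces and rewrite $[x,y]\tr z-[x,y]_{\diamond}\tr z+\lb x,y\rb\td z=\lb x,y\rb\tr z+\lb x,y\rb\td z=\lb x,y\rb\btr z$ to conclude.
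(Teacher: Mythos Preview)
Your proposal is correct and follows essentially the same route as the paper's proof: both expand $\btr=\tr+\td$ into pure-$\tr$, pure-$\td$, and mixed pieces, use the post-Lie associator axiom \eqref{eq: post-Lie associator condition} on the pure-$\tr$ part, the relation \eqref{eq: curvature vs torsion} on the pure-$\td$ part, and the derivation hypothesis \eqref{eq: compatibility derivation} to collapse the mixed part into $-[x,y]_{\diamond}\tr z$. Your packaging of the mixed terms into a single $\Phi$ (and the observation that $\Phi(x,y,z)-\Phi(y,x,z)=-[x,y]_{\diamond}\tr z$) is exactly what the paper does with its $E_2,E_2',E_3,E_3'$ bookkeeping, just stated more compactly; note there is a harmless sign typo in your second paragraph (the mixed block should read $x\tr(y\td z)+x\td(y\tr z)$, as you correctly write when defining $\Phi$).
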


Finaly, from the lasts two Lemmas, one easily deduces the proof of the Theorem \ref{th: deformation post-Lie}:
\begin{proof}[Proof the Theorem \ref{th: deformation post-Lie}] We prove easily that the conditions of Definition \ref{def: post-Lie} of post-Lie algebras are satisfied using the previous Lemmas:
\begin{itemize}
    \item By Lemma \ref{lemma: constant torsion post-Lie identity}, with $\td\tors{\td,[\cdot,\cdot]}\equiv0$, the condition \eqref{eq: post-Lie derivation condition} is satisfied.
    \item By Lemma \ref{lemma: curvature post-Lie identity}, with $\curv{\td,[\cdot,\cdot]}\equiv 0$, the condition \eqref{eq: post-Lie associator condition} is satisfied.
    \item By the first Bianchi identity \ref{lem: the first Bianchi identity} with both conditions $\td\tors{\td,[\cdot,\cdot]}\equiv 0$ and $\curv{\td,[\cdot,\cdot]}\equiv 0$, the bracket $\lb\cdot,\cdot\rb$ satifies the Jacobi identity since:
    \[\mathfrak{S}\left(\tors{\td,[\cdot,\cdot]}(\tors{\td,[\cdot,\cdot]} (x,y), z )\right)=\mathfrak{S}\Big(\lb\lb x,y\rb, z \rb\Big)=0\]
    We also trivially have the anti-symmetry condition, and hence $\lb\cdot,\cdot\rb$ is a Lie bracket on $L$.
\end{itemize}
\end{proof}

\begin{corollary}[Adjoint post-Lie algebras, see \cite{munthe2013post} Proposition 2.6.]
    Let $(L,\tr, [\cdot,\cdot])$ be a post-Lie algebra and define the product $\btr$ as:
\[x \btr y := x \tr y + [x, y].\]
Then, the adjoint $(L, \btr, -[\cdot,\cdot])$ of $(L,\tr, [\cdot,\cdot])$ is also a post-Lie algebra. The operation consisting in taking the adjoint of a post-Lie algebra is an involution.
\end{corollary}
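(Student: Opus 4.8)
The plan is to recognise the adjoint as a particular gpL-deformation and then invoke Theorem~\ref{th: deformation post-Lie} directly, with the connection $\td := [\cdot,\cdot]$ (the ambient Lie bracket itself). First I would spell out what the data of that theorem become for this choice. The commutator of $\td$ is $x\td y - y\td x = [x,y]-[y,x] = 2[x,y]$, so by~\eqref{eq: torsion} the torsion is
\[
\tors{\td,[\cdot,\cdot]}(x,y) = (x\td y - y\td x) - [x,y] = [x,y];
\]
hence the bracket predicted by the theorem is $\lb\cdot,\cdot\rb = -\tors{\td,[\cdot,\cdot]} = -[\cdot,\cdot]$, and the deformed product is $\btr = \tr + \td = \tr + [\cdot,\cdot]$, which is exactly $x\btr y = x\tr y + [x,y]$. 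Thus the $\td$-gpL-deformation of $(L,\tr,[\cdot,\cdot])$, if it exists, is precisely the adjoint $(L,\btr,-[\cdot,\cdot])$.

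Next I would verify the three compatibility hypotheses of Theorem~\ref{th: deformation post-Lie}. The compatibility between $\td$ and $\tr$, namely $x\tr\cdot\in\Der(L,\td)$, unwinds to $x\tr[y,z] = [x\tr y,z]+[y,x\tr z]$, which is nothing but the post-Lie derivation axiom~\eqref{eq: post-Lie derivation condition} and therefore holds automatically. For the curvature, \eqref{eq: curvature} with $\td=[\cdot,\cdot]$ gives
\[
\curv{\td,[\cdot,\cdot]}(x,y,z) = [x,[y,z]] - [y,[x,z]] - [[x,y],z],
\]
which vanishes by the Jacobi identity. Finally, the constant-torsion condition $\td\tors{\td,[\cdot,\cdot]}\equiv 0$: feeding $\tors{}=\tors{\td,[\cdot,\cdot]}=[\cdot,\cdot]$ and $x\td\cdot=[x,\cdot]$ into~\eqref{eq: covariant derivative torsion} yields once more $[x,[y,z]] - [[x,y],z] - [y,[x,z]] = 0$, which is again the Jacobi identity. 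With all three conditions checked, Theorem~\ref{th: deformation post-Lie} yields that $(L,\tr+[\cdot,\cdot],-[\cdot,\cdot]) = (L,\btr,-[\cdot,\cdot])$ is a post-Lie algebra, the asserted adjoint.

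It remains to see that taking the adjoint is an involution, which I would do by applying the construction twice. Starting from $(L,\tr,[\cdot,\cdot])$, its adjoint has product $x\mapsto x\tr y+[x,y]$ and bracket $-[\cdot,\cdot]$; applying the adjoint construction to this post-Lie algebra produces the product $x\mapsto (x\tr y+[x,y])+(-[x,y]) = x\tr y$ and the bracket $-(-[\cdot,\cdot]) = [\cdot,\cdot]$, so one recovers $(L,\tr,[\cdot,\cdot])$.

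I do not expect a real obstacle here: the entire argument is a short verification, the only genuine idea being the choice $\td=[\cdot,\cdot]$ as the connection. The two places that call for a little care are that the torsion in~\eqref{eq: torsion} subtracts the ambient bracket (so $\tors{[\cdot,\cdot],[\cdot,\cdot]}$ equals $[\cdot,\cdot]$, not $2[\cdot,\cdot]$), and the observation that the post-Lie derivation axiom is literally the derivation-compatibility hypothesis of Theorem~\ref{th: deformation post-Lie}.
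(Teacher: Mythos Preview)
Your proposal is correct and follows exactly the paper's approach: applying Theorem~\ref{th: deformation post-Lie} with $\td:=[\cdot,\cdot]$ and checking the hypotheses via the Jacobi identity. In fact you are more thorough than the paper, which omits the verification of the compatibility~\eqref{eq: compatibility derivation} (you correctly identify it with the post-Lie axiom~\eqref{eq: post-Lie derivation condition}) and the involution statement, and your sign $\tors{[\cdot,\cdot],[\cdot,\cdot]}=[\cdot,\cdot]$ is the correct one (the paper's $-[\cdot,\cdot]$ is a typo, immaterial to the argument).
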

\begin{proof}
    We apply the geometric deformation of Theorem \ref{th: deformation post-Lie} with $\td:=[\cdot,\cdot]$ on the post-Lie algebra $(L,\tr,[\cdot,\cdot])$. Let us verify that the compatibility conditions are satisfied:\\
    By anti-symmetricity of $[\cdot,\cdot]$, we trivially have that the torsion of $[\cdot,\cdot]$ on $(L,[\cdot,\cdot])$ is given by $\mathbf{T}^{[\cdot,\cdot],[\cdot,\cdot]}=-[\cdot,\cdot]$, and its $[\cdot,\cdot]$-covariant derivative, given by the equality \eqref{eq: covariant derivative torsion} is therefore null $[\cdot,\cdot]\td\tors{[\cdot,\cdot],[\cdot,\cdot]}\equiv 0$ because of the Jacobi identity. To prove that the curvature is null, using the anti-symmetricity of $[\cdot,\cdot]$ and the Jacobi identity:
    \begin{align*}
        \curv{[\cdot,\cdot],[\cdot,\cdot]}(x,y,z)&:=[x,[y, z]]-[y,[x, z]]-[[x,y], z]\\
        &=[x,[y, z]]+[y,[z, x]]+[z,[x,y]]\\
        &=0
    \end{align*}
\end{proof}

\subsection{Hopf algebra structure on the Lie enveloping algebra of a gpL-deformation of a post-Lie algebra.}

The Lie enveloping algebra of a Lie algebra $(L,[\cdot,\cdot])$, denoted $\env_{[\cdot,\cdot]}(L)$, is defined as the tensor algebra $\tens(L)=\bigoplus_{k\geq 0} L^{\otimes k}$ over $L$ quotiented by the two-sided ideal $\mathfrak{c}$ generated by $\{x\otimes y - y\otimes x - [x,y]:\,x,y\in L\}$:
\[\env_{[\cdot,\cdot]}(L):=\tens(L)/ \mathfrak{c}.\]
As no confusion is likely to arise, we will use the same notation \( x_1 \cdots x_n \) for both the equivalence class in \( \env_{[\cdot,\cdot]}(L) \) and its representative in \( \tens(L) \). The following theorem allows one to describe a basis on $\env_{[\cdot,\cdot]}(L)$:
\begin{theorem}[Poincaré-Birkhoff-Witt]\label{theo: PBW}
    Given a basis $\B_L$ of $L$ and a total order $\leq$ on it, a basis $\B_{\envU(L)}=\B_{\envU(L)}^\leq$ of $\env_{[\cdot,\cdot]}(L)$ is given by 
    \begin{equation}\label{eq:basis}
\begin{split}
\B_{\envU(L)}:=&\{\mathds 1\}\sqcup\Bigg\{\frac1{m_1!\cdots m_k!}\,x_1^{m_1}\cdots x_k^{m_k}: \   k, m_1,\ldots,m_k\geq 1, \\
& \qquad \qquad x_1<\ldots<x_k, \ x_i\in\B_L
\Bigg\}.
\end{split}
\end{equation}
\end{theorem}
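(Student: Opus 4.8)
The plan is to run the classical Poincaré--Birkhoff--Witt argument, noting first that the coefficients $\tfrac1{m_1!\cdots m_k!}$ in \eqref{eq:basis} merely rescale each candidate basis vector individually --- this is the only place one needs $\K$ of characteristic zero, which the statement tacitly assumes --- so it suffices to prove that the \emph{ordered monomials} $x_{i_1}\cdots x_{i_n}$ with $n\geq 0$ and $x_{i_1}\leq\cdots\leq x_{i_n}$ in $\B_L$ (a class being identified with a representative, $n=0$ giving $\ind$) form a $\K$-basis of $\env_{[\cdot,\cdot]}(L)$. The \emph{spanning} half is routine: working in $\tens(L)$ and using that $x\otimes y-y\otimes x-[x,y]\in\mathfrak c$, one shows by a double induction --- on the tensor degree $n$, and for fixed $n$ on the number of inverted pairs $k<\ell$ with $x_{i_k}>x_{i_\ell}$ --- that every monomial is congruent mod $\mathfrak c$ to a $\K$-combination of ordered monomials, since each use of the relation either removes one inversion at constant degree or produces a term of degree $n-1$, both covered by the induction hypothesis.

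\emph{Linear independence} is the crux. Let $P$ be the free $\K$-vector space on the set of \emph{formal} ordered words in $\B_L$ (so these words form a basis of $P$ by construction), graded by word-length. The heart of the matter is to build a Lie algebra action $\rho\colon L\to\End(P)$ characterised by the prescription that $\rho(x_j)$ prepends $x_j$ to any formal ordered word already beginning with a letter $\geq x_j$; the action on the remaining generators and words is then forced, on strictly lower length, by the commutator identity $\rho(x_a)\rho(x_b)-\rho(x_b)\rho(x_a)=\rho([x_a,x_b])$ together with the structure constants of $[\cdot,\cdot]$. That such a $\rho$ exists and is consistent is established by an induction on word-length whose inductive step consumes exactly the Jacobi identity for $[\cdot,\cdot]$; I expect this verification to be the only genuine obstacle in the proof.

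Granting the representation $\rho$, it extends to an algebra morphism $\widetilde\rho\colon\env_{[\cdot,\cdot]}(L)\to\End(P)$, and a one-line induction on $n$ shows $\widetilde\rho(x_{i_1}\cdots x_{i_n})(\ind)=x_{i_1}\cdots x_{i_n}\in P$ for every ordered monomial; since the formal ordered words on the right are linearly independent in $P$ by construction, so are the ordered monomials in $\env_{[\cdot,\cdot]}(L)$, which together with the spanning step proves the theorem. (Equivalently, one can package the same content via Bergman's Diamond Lemma applied to the rewriting rule sending $x_jx_i$ to $x_ix_j+[x_j,x_i]$ whenever $x_j>x_i$: the only overlap ambiguities arise from triples $x_kx_jx_i$ with $x_k>x_j>x_i$, and resolving them both ways reduces to $\mathfrak S\bigl([[x_k,x_j],x_i]\bigr)=0$, i.e.\ the Jacobi identity, whereupon the lemma identifies the ordered monomials as a basis.)
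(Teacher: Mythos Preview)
The paper does not prove this statement: it is quoted as the classical Poincar\'e--Birkhoff--Witt theorem and no argument is supplied. Your proposal is a correct outline of the standard proof --- the spanning step by a double induction on degree and number of inversions is routine, and for linear independence the construction of a representation on the free vector space $P$ of formal ordered words (or, equivalently, Bergman's Diamond Lemma applied to the rewriting rule $x_jx_i\mapsto x_ix_j+[x_j,x_i]$ for $x_j>x_i$) is exactly the classical approach, with the Jacobi identity resolving the only overlap ambiguities. Since the paper works over $\R$, the characteristic-zero hypothesis you flag for the normalisation factors $\tfrac1{m_1!\cdots m_k!}$ is automatically satisfied. There is therefore nothing in the paper to compare your argument against; your sketch stands on its own as a valid proof of the cited result.
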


We denote by $\Delta_\ast$ the \textit{coshuffle coproduct} which is defined on $\env_{[\cdot,\cdot]}(L)$, using the basis $\B_{\envU(L)}$ in \eqref{eq:basis} as:
\begin{equation}\label{eq:cop1}
\begin{split}
\Delta_\ast \prod_{i=1}^k \frac{x_i^{m_i}}{m_i!} = \prod_{i=1}^k \Delta_\ast  \frac{x_i^{m_i}}{m_i!}
&= \prod_{i=1}^k \sum_{\ell=0}^{m_i} \frac{x_i^{\ell}}{\ell!}\otimes\frac{x_i^{m_i-\ell}}{(m_i-\ell)!}
\\ & = \sum_{0\le\ell_i\le m_i} \left(\prod_{i=1}^k\frac{x_i^{\ell_i}}{\ell_i!}\right)\otimes
\left(\prod_{i=1}^k\frac{x_i^{m_i-\ell_i}}{(m_i-\ell_i)!}\right),
\end{split}
\end{equation}
for which we recall Sweedler's notation:
\begin{equation}\label{eq: Sweedler}
    \Delta_\ast(u)=\sum_{\Delta_\ast(u)} u^{(1)} \otimes u^{(2)},
\end{equation}

Let us now recall the extension of the product $\tr$ to all $u,v \in \env_{[\cdot,\cdot]}(L)$, see Proposition 3.1 in \cite{ebrahimi2014lie}:
\begin{prop}
\label{prop: extension post-lie product}
Let $(L,\tr,[\cdot,\cdot])$ be a post-Lie algebra. There exists a unique extension of the product $\tr$ to $\env_{[\cdot,\cdot]}(L)$ which verifies for all $x\in L$ and $u,v,w\in \env_{[\cdot,\cdot]}(L)$:
\begin{enumerate}
    \item $\1 \tr u = u$, $u\tr \1=\varepsilon(u) \1$
    \item $(xv)\tr w=x\tr(v\tr w)-(x\tr v)\tr w$
    \item $u\tr (v w)=\displaystyle\sum_{\Delta_\ast(u)} (u^{(1)}\tr v)(u^{(2)}\tr w)$.
\end{enumerate}
\end{prop}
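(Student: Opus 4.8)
The plan is to establish uniqueness and existence separately, the latter being the substantial part.

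\emph{Uniqueness.} Filter $\env_{[\cdot,\cdot]}(L)$ by $\env_{\le N}$, the image of $\bigoplus_{k\le N}L^{\otimes k}$. I claim (1)--(3) already pin $\tr$ down. First, taking the left argument primitive in (3), i.e. $\Delta_\ast x=x\otimes\1+\1\otimes x$ for $x\in L$, property (3) together with (1) forces $x\tr\cdot$ to be a derivation of the associative algebra $\env_{[\cdot,\cdot]}(L)$; since it must restrict to the given product on $L$ and $L$ generates $\env_{[\cdot,\cdot]}(L)$, this determines $x\tr\cdot$ completely. Then (2) expresses $(xv)\tr w$ through $\tr$ with strictly shorter left arguments (each $x\tr\cdot$ preserves the filtration), and since the products $x_1\cdots x_n$ ($x_i\in L$) span $\env_{[\cdot,\cdot]}(L)$, a double induction on $N$ shows that any two extensions coincide.

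\emph{Existence.} The post-Lie derivation axiom \eqref{eq: post-Lie derivation condition} says precisely that $L_x:=x\tr\cdot|_L$ is a derivation of the Lie algebra $(L,[\cdot,\cdot])$; hence it extends uniquely to a derivation $D_x$ of the associative algebra $\env_{[\cdot,\cdot]}(L)$, and one sets $x\tr u:=D_x(u)$ for $x\in L$. With this, (1) holds for $x\in L$ (with $D_x\1=0$), and (3) holds whenever the left argument lies in $L$, since $D_x$ is a derivation of the product. Next, expanding $\ass_\tr$ in the associator axiom \eqref{eq: post-Lie associator condition} rewrites it on $L$ as $[x,y]\tr z=[L_x,L_y](z)-L_{[x,y]_\tr}(z)$, that is $[L_x,L_y]=L_{[x,y]+[x,y]_\tr}=L_{\db{x,y}}$ with $\db{\cdot,\cdot}$ the Lie bracket of Proposition \ref{prop: composition Lie bracket}; since a derivation of $\env_{[\cdot,\cdot]}(L)$ is determined by its restriction to $L$, this lifts to the operator identity
\[
[D_x,D_y]=D_{\db{x,y}}\qquad\text{on }\env_{[\cdot,\cdot]}(L).
\]
To reach arbitrary left arguments, I would build the extension first on the free object: lift $D_x$ to the obvious derivation $\widehat D_x$ of $\tens(L)$ and define $\widetilde\tr:\tens(L)\otimes\tens(L)\to\tens(L)$ by $\1\,\widetilde\tr\,w=w$ and $(x\otimes v)\,\widetilde\tr\,w:=\widehat D_x(v\,\widetilde\tr\,w)-(\widehat D_x v)\,\widetilde\tr\,w$ for $x\in L$, which is unambiguous since $\tens(L)$ is free. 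The point is to show that $\widetilde\tr$ descends to $\env_{[\cdot,\cdot]}(L)^{\otimes 2}$: in the right slot this is immediate because each $\widehat D_x$ preserves the defining ideal $\mathfrak{c}$ (again by \eqref{eq: post-Lie derivation condition}); in the left slot it reduces, by a bookkeeping induction using $\widehat D_x(r_{yz})=r_{x\tr y,z}+r_{y,x\tr z}$ (where $r_{ab}:=ab-ba-[a,b]$), to the computation
\[
r_{yz}\,\widetilde\tr\,w=[\widehat D_y,\widehat D_z](w)-\widehat D_{\db{y,z}}(w),
\]
whose right-hand side lies in $\mathfrak{c}$ precisely because $[D_x,D_y]=D_{\db{x,y}}$. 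Once $\tr$ is defined on $\env_{[\cdot,\cdot]}(L)^{\otimes 2}$, properties (1) and (2) are built into the construction, and (3) for a general left argument $xu'$ follows by induction on the filtration degree: apply (2), then (3) for the primitive $x$, then use that $D_x$ is a coderivation for $\Delta_\ast$ (being the canonical extension of a map $L\to L$) together with the multiplicativity of $\Delta_\ast$; a short manipulation then reduces the identity for $xu'$ to the identity for $u'$.

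The step I expect to be the main obstacle is the well-definedness of $\widetilde\tr$ in the left slot, i.e. the consistency of the recursion with the relations of $\env_{[\cdot,\cdot]}(L)$; the displayed identity localises this to a single relation $r_{yz}$, where the post-Lie structure enters through $[D_x,D_y]=D_{\db{x,y}}$ -- this is really the algebraic heart of the statement. A more conceptual alternative route: since $[D_x,D_y]=D_{\db{x,y}}$, the map $x\mapsto D_x$ is a Lie morphism $(L,\db{\cdot,\cdot})\to(\End(\env_{[\cdot,\cdot]}(L)),[\cdot,\cdot])$, hence extends by the universal property to an algebra morphism $\env_{\db{\cdot,\cdot}}(L)\to\End(\env_{[\cdot,\cdot]}(L))$; transporting this action along the PBW coalgebra isomorphism $\env_{\db{\cdot,\cdot}}(L)\cong\env_{[\cdot,\cdot]}(L)$ produces $\tr$, with (1) and (3) amounting to the module-algebra property over the cocommutative Hopf algebra $\env_{\db{\cdot,\cdot}}(L)$, and the remaining work being to identify property (2) by comparing the two associative products through the symmetrisation map.
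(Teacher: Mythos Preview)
The paper does not prove this proposition; it is quoted verbatim from \cite{ebrahimi2014lie} (their Proposition~3.1) and simply cited. So there is no ``paper's proof'' to compare against beyond the original reference.

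Your outline is correct and captures the genuine algebraic content: the post-Lie axioms \eqref{eq: post-Lie derivation condition}--\eqref{eq: post-Lie associator condition} say exactly that $x\mapsto L_x$ is a Lie morphism $(L,\llbracket\cdot,\cdot\rrbracket)\to\Der(L,[\cdot,\cdot])$, whence $[D_x,D_y]=D_{\llbracket x,y\rrbracket}$, and this is indeed the engine behind well-definedness. In fact your displayed quantity $r_{yz}\,\widetilde\tr\,w=[\widehat D_y,\widehat D_z](w)-\widehat D_{\llbracket y,z\rrbracket}(w)$ vanishes \emph{exactly} (not merely modulo $\mathfrak c$), since both sides are derivations of $\tens(L)$ agreeing on $L$.

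There is however one place where your sketch understates the work. For the left-slot well-definedness you must treat the full two-sided ideal, i.e.\ elements $a\cdot r_{yz}\cdot b$. The induction on the left prefix $a$ goes exactly as you say, using $\widehat D_x(r_{yz})=r_{x\tr y,z}+r_{y,x\tr z}$. But for the right suffix $b$ the recursion peels letters from the \emph{left}, so one cannot simply induct on $|b|$; a direct computation is needed. Expanding $(r_{yz}\cdot b)\,\widetilde\tr\,w$ via two applications of the recursion and regrouping yields
\[
(r_{yz}\cdot b)\,\widetilde\tr\,w
=\bigl([\widehat D_y,\widehat D_z]-\widehat D_{\llbracket y,z\rrbracket}\bigr)(b\,\widetilde\tr\,w)
-\Bigl(\bigl([\widehat D_y,\widehat D_z]-\widehat D_{\llbracket y,z\rrbracket}\bigr)b\Bigr)\,\widetilde\tr\,w
=0,
\]
again by $[\widehat D_y,\widehat D_z]=\widehat D_{\llbracket y,z\rrbracket}$. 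This is the missing line; once it is in place, your induction on $a$ handles $a\cdot r_{yz}\cdot b$ and the descent to $\env_{[\cdot,\cdot]}(L)$ is complete. Your inductive verification of (3) using that $D_x$ is both a derivation and a coderivation is correct as stated. The alternative route via $\env_{\llbracket\cdot,\cdot\rrbracket}(L)$ is viable but, as you note, recovering property~(2) through the PBW symmetrisation is where the effort relocates rather than disappears.
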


The next Proposition \ref{prop: associative product} has been proved in \cite[Proposition 3.3]{ebrahimi2014lie}, which extends the Guin--Oudom approach \cite{oudom2008lie}, originally used in the case of pre-Lie algebras, to the case of post-Lie algebras. 

\begin{prop} \label{prop: associative product} 
Let $(L,\tr,[\cdot,\cdot])$ be a post-Lie algebra. The product $\star_{\tr}:\env_{[\cdot,\cdot]}(L)\otimes \env_{[\cdot,\cdot]}(L)\to \env_{[\cdot,\cdot]}(L)$ defined for all $u,v\in \env_{[\cdot,\cdot]}(L)$ using Sweedler's notation, by:
\begin{equation}
\label{eq: post-Lie associative product}
	u\star_{\tr} v =\displaystyle\sum_{\Delta_\ast(u)} u^{(1)} (u^{(2)}\tr v)
\end{equation}
is associative and $\Big(\env_{[\cdot,\cdot]}(L),\star_{\tr},\Delta_\ast,\ind,\varepsilon\Big)$ is a Hopf algebra, where the unit element $\ind$ is given as the canonical injection map $\R\ni t\mapsto
t\ind\in\env_{[\cdot,\cdot]}(L)$ and the counit $\varepsilon$ is given as the map $\env_{[\cdot,\cdot]}(L)\ni x\mapsto \varepsilon(x)\in\R$ where 
$x-\varepsilon(x){\ind}\in \bigoplus_{k\geq 1} L^{\otimes k}/{\mathfrak c}$.
\end{prop}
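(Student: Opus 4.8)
The plan is to carry out the Guin--Oudom construction \cite{oudom2008lie} in the post-Lie setting, as in \cite[Proposition~3.3]{ebrahimi2014lie}. I would freely use Proposition~\ref{prop: extension post-lie product} for the extension of $\tr$ to $\env_{[\cdot,\cdot]}(L)$, the Poincaré--Birkhoff--Witt Theorem~\ref{theo: PBW}, and the classical fact that $\bigl(\env_{[\cdot,\cdot]}(L),\cdot,\Delta_\ast,\ind,\varepsilon\bigr)$ is already a cocommutative connected Hopf algebra for the associative (concatenation) product $\cdot$ of the enveloping algebra and the coproduct $\Delta_\ast$ of \eqref{eq:cop1} making each $x\in L$ primitive; in particular $\Delta_\ast$ is coassociative, cocommutative and an algebra morphism for $\cdot$. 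I would also first record that, by items (2)--(3) of Proposition~\ref{prop: extension post-lie product}, the extended $\tr$ sends $\env_{[\cdot,\cdot]}(L)\otimes L^{\otimes n}$ into $\bigoplus_{1\le k\le n} L^{\otimes k}$ for $n\ge 1$, i.e. it respects the word-length filtration in its second argument and keeps positive-degree elements positive there; in particular $\varepsilon(u\tr v)=\varepsilon(u)\varepsilon(v)$.

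\textbf{Step 1 (Hopf-compatibility of $\Delta_\ast$ with $\star_\tr$).} The technical heart is the ``twisted comultiplicativity'' of the extended action: in Sweedler notation,
\begin{equation*}
\Delta_\ast(u\tr v)=\sum (u^{(1)}\tr v^{(1)})\otimes(u^{(2)}\tr v^{(2)}).
\end{equation*}
I would prove this by induction on the word length of $u$: for $u\in L$ it is a short check (use that $u\tr y$ is primitive for $y\in L$ and that $\tr$ respects the filtration), and for $u=xu'$ one expands $\Delta_\ast(xu')$, peels off the letter $x$ with item (2) of Proposition~\ref{prop: extension post-lie product}, and uses item (3) together with coassociativity. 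From this identity, the multiplicativity of $\Delta_\ast$ for $\cdot$, and cocommutativity, one then deduces
\begin{equation*}
\Delta_\ast(u\star_\tr v)=\sum (u^{(1)}\star_\tr v^{(1)})\otimes(u^{(2)}\star_\tr v^{(2)})
\end{equation*}
by expanding $u\star_\tr v=\sum u^{(1)}(u^{(2)}\tr v)$ and reshuffling Sweedler indices.

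\textbf{Step 2 (the extended $\tr$ is a module action, and associativity).} Next I would show $(u\star_\tr v)\tr w=u\tr(v\tr w)$ for all $u,v,w$, by induction on the length of $u$: the case $u=\ind$ is item (1), the case $u=x\in L$ follows because $x\star_\tr v=xv+x\tr v$ and item (2) rearranges to exactly this, and for $u=xu'$ one expands $\Delta_\ast(xu')$, applies item (2), and invokes the induction hypothesis for both $u'$ and $x\tr u'$ (which has strictly smaller length) together with Step~1. Granted Steps~1 and~2, associativity is the routine Sweedler computation
\begin{align*}
(u\star_\tr v)\star_\tr w
&=\sum (u^{(1)}\star_\tr v^{(1)})\bigl((u^{(2)}\star_\tr v^{(2)})\tr w\bigr)=\sum (u^{(1)}\star_\tr v^{(1)})\bigl(u^{(2)}\tr(v^{(2)}\tr w)\bigr)\\
&=\sum u^{(1)}(u^{(2)}\tr v^{(1)})\bigl(u^{(3)}\tr(v^{(2)}\tr w)\bigr)=\sum u^{(1)}\bigl(u^{(2)}\tr(v^{(1)}(v^{(2)}\tr w))\bigr)=u\star_\tr(v\star_\tr w),
\end{align*}
which uses Step~2, then the definition of $\star_\tr$ with coassociativity, then item (3) with coassociativity, then the definition of $\star_\tr$ again.

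\textbf{Step 3 (Hopf algebra structure and the main obstacle).} Finally $\ind$ is a two-sided unit for $\star_\tr$ (by item (1) on the left, and $u\star_\tr\ind=\sum u^{(1)}\varepsilon(u^{(2)})\ind=u$ on the right); $\varepsilon$ is an algebra morphism for $\star_\tr$ because $\varepsilon(u\tr v)=\varepsilon(u)\varepsilon(v)$; and $\Delta_\ast$ is an algebra morphism by Step~1 while remaining coassociative and cocommutative, with $\ind$ grouplike. Hence $\bigl(\env_{[\cdot,\cdot]}(L),\star_\tr,\Delta_\ast,\ind,\varepsilon\bigr)$ is a bialgebra; it is filtered by word length (since $u\star_\tr v=\sum u^{(1)}(u^{(2)}\tr v)$ has word length bounded by the sum of those of $u$ and $v$) with degree-zero part $\R\ind$, hence connected, so the antipode exists automatically and we obtain the asserted Hopf algebra. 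The step I expect to be the main obstacle is Step~1, and precisely the twisted comultiplicativity of $\tr$: all the post-Lie axioms have already been absorbed into the extension rules (1)--(3) of Proposition~\ref{prop: extension post-lie product}, so the remaining work is the careful Sweedler bookkeeping — juggling coassociativity, cocommutativity and the length filtration — with no further input from the post-Lie structure itself.
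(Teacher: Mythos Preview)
Your proposal is correct and follows exactly the route the paper points to: the paper does not give its own proof of this proposition but simply cites \cite[Proposition~3.3]{ebrahimi2014lie} as the post-Lie extension of the Guin--Oudom construction \cite{oudom2008lie}, which is precisely the argument you have sketched. The three steps you outline (twisted comultiplicativity of the extended $\tr$, the module identity $(u\star_\tr v)\tr w=u\tr(v\tr w)$, and the connected-filtered bialgebra argument for the antipode) are the standard ingredients of that proof, so there is nothing to add.
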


In particular we have the following usefull equalities for $x_0,x_1,\ldots,x_n\in L$:
\begin{equation}\label{eq:atrbbb}
\begin{split}
x_0\star_{\tr} (x_1\cdots x_n) & = x_0\tr (x_1\cdots x_n)+ x_0x_1\cdots x_n
\\ & = \sum_{i=1}^n x_1 \cdots (x_0\tr x_i) \cdots x_n+ x_0x_1\cdots x_n,
\end{split}
\end{equation}

Then, the associativity of $\star_\tr$ implies that the commutator $[\cdot,\cdot]_{\star_{\tr}}$ is a Lie bracket on $\env_{[\cdot,\cdot]}(L)$; moreover, we have for all $x,y\in L\subset \env_{[\cdot,\cdot]}(L)$ that \[[x,y]_{\star_{\tr}}:=x\tr y -y\tr x+ xy-yx=[x,y]_{\tr}+ [x,y]=\llbracket x,y \rrbracket,\] where the Lie bracket $\llbracket \cdot,\cdot \rrbracket$ has been defined in Proposition \ref{eq: composition Lie bracket}. Therefore, the canonical injection $L\hookrightarrow \env_{[\cdot,\cdot]}(L)$ is a Lie algebra morphism $(L,\llbracket \cdot,\cdot \rrbracket)\hookrightarrow (\env_{[\cdot,\cdot]}(L),[\cdot,\cdot]_{\star_{\tr}})$, which factors through the Lie enveloping algebra $\env_{\llbracket \cdot,\cdot \rrbracket}(L)$. Denoting by $\conc$, respectively $\Delta_\ast$, the concatenation product, respectively the unshuffle coproduct, on $\env_{\llbracket \cdot,\cdot \rrbracket}(L)$, we recall the following:

\begin{theorem}[\cite{jacques2023post} Theorem 2.9.]\label{theo: isomorphism between enveloping algebras}
Let $(L,\tr,[\cdot,\cdot])$ be a post-Lie algebra, and let $\llbracket\cdot,\cdot \rrbracket:= [\cdot,\cdot]_\tr + [\cdot,\cdot]$, denote the composition Lie bracket on $L$, given in Proposition \ref{prop: composition Lie bracket}.\\
The linear map $\Phi_{\tr}$ defined below is an isomorphism of Hopf algebras:
$$\writefuniso{\Phi_{\tr}}{\left(\env_{\llbracket \cdot,\cdot \rrbracket}(L),\mathsf{conc},\Delta_\ast\right)}{\left(\env_{[\cdot,\cdot]}(L),\star_{\tr},\Delta_\ast\right)}{x_1 \cdots x_n}{x_1 \star_{\tr} \cdots \star_{\tr} x_n}\qquad x_1,\ldots,x_n\in L$$
\end{theorem}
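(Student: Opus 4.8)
The plan is to prove that $\Phi_{\tr}$ is a coalgebra morphism, an algebra morphism, and a linear isomorphism; being bijective and compatible with both structures, it is then an isomorphism of Hopf algebras (compatibility of the antipode is automatic for bialgebra isomorphisms between Hopf algebras). The two sides share the same underlying vector space $L^{\otimes\bullet}/\mathfrak c$ only in the sense that both are built from $L$, so it is cleaner to think of $\env_{\db{\cdot,\cdot}}(L)$ and $\env_{[\cdot,\cdot]}(L)$ as abstract Hopf algebras and to use the PBW basis of Theorem \ref{theo: PBW} to check that $\Phi_{\tr}$ is well defined and bijective.

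First I would check that $\Phi_{\tr}$ is well defined on $\env_{\db{\cdot,\cdot}}(L)$. The tensor algebra $\tens(L)$ maps to $\env_{[\cdot,\cdot]}(L)$ via $x_1\otimes\cdots\otimes x_n\mapsto x_1\star_{\tr}\cdots\star_{\tr}x_n$, which is well defined since $\star_{\tr}$ is associative by Proposition \ref{prop: associative product}. To descend to the quotient by the ideal generated by $x\otimes y-y\otimes x-\db{x,y}$, it suffices to observe, as recalled just before the statement, that $[x,y]_{\star_{\tr}}=x\star_{\tr}y-y\star_{\tr}x=\db{x,y}$ for $x,y\in L$; hence $x\star_{\tr}y-y\star_{\tr}x-\db{x,y}=0$ in $\env_{[\cdot,\cdot]}(L)$, so the ideal is killed and $\Phi_{\tr}$ factors through $\env_{\db{\cdot,\cdot}}(L)$.

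Next I would establish bijectivity via a filtration/PBW argument. Fix a basis $\B_L$ of $L$ with a total order; by Theorem \ref{theo: PBW} the ordered monomials $x_1^{m_1}\cdots x_k^{m_k}$ (with $x_1<\cdots<x_k$ in $\B_L$, up to the factorial normalisation) form a basis of both $\env_{\db{\cdot,\cdot}}(L)$ (with the concatenation product) and $\env_{[\cdot,\cdot]}(L)$. Filter both algebras by the length of words (number of tensor factors); the key point is that for $x_1,\dots,x_n\in L$,
\[
x_1\star_{\tr}\cdots\star_{\tr}x_n = x_1\cdots x_n + (\text{terms of length}<n),
\]
which follows by induction from \eqref{eq:atrbbb}: each application of $\star_{\tr}$ contributes the concatenation term plus a $\tr$-term that, since $\tr$ maps into $L$ and acts as a derivation, strictly lowers the word length. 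Therefore $\Phi_{\tr}$ is upper-triangular with identity diagonal with respect to the PBW bases, hence a linear bijection. (One must be a little careful that $\B_L$ need not be the same ordered set for the two sides — but one is free to use the same $\B_L$ and order, since PBW applies to any choice.)

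Then I would verify that $\Phi_{\tr}$ is a coalgebra morphism, i.e. $\Delta_\ast\circ\Phi_{\tr}=(\Phi_{\tr}\otimes\Phi_{\tr})\circ\Delta_\ast$, and preserves the counit. Since $\Delta_\ast$ is the same unshuffle/coshuffle coproduct on both sides and $\Phi_{\tr}$ is generated multiplicatively (for the respective products) by the identity on $L$, it is enough to know that $\Delta_\ast$ is a morphism for $\star_{\tr}$ — which is exactly part of the statement that $(\env_{[\cdot,\cdot]}(L),\star_{\tr},\Delta_\ast)$ is a Hopf algebra in Proposition \ref{prop: associative product} — together with the fact that the elements of $L$ are primitive for $\Delta_\ast$ (immediate from \eqref{eq:cop1} with $k=1$, $m_1=1$). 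An induction on word length then gives $\Delta_\ast(x_1\star_{\tr}\cdots\star_{\tr}x_n)=\Delta_\ast(x_1)\star_{\tr}\cdots\star_{\tr}\Delta_\ast(x_n)$ in $\env_{[\cdot,\cdot]}(L)^{\otimes 2}$, which is precisely $(\Phi_{\tr}\otimes\Phi_{\tr})\Delta_\ast(x_1\cdots x_n)$ since $\Delta_\ast$ on the concatenation side is again the unshuffle coproduct, primitive on $L$, and multiplicative for concatenation. Finally, $\Phi_{\tr}$ is an algebra morphism essentially by construction: it sends the concatenation product $x_1\cdots x_n$ to $x_1\star_{\tr}\cdots\star_{\tr}x_n$, and associativity of $\star_{\tr}$ gives $\Phi_{\tr}(u\,\conc\,v)=\Phi_{\tr}(u)\star_{\tr}\Phi_{\tr}(v)$ on monomials, hence everywhere by linearity; it clearly preserves the unit $\ind$.

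The main obstacle is the well-definedness and bijectivity bookkeeping: one has to be scrupulous that the relation $[x,y]_{\star_{\tr}}=\db{x,y}$ is exactly the defining relation of $\env_{\db{\cdot,\cdot}}(L)$ (so that no further relations are needed and none are missed), and that the length filtration argument genuinely yields a unitriangular change of basis — which rests on the fact, visible in \eqref{eq:atrbbb}, that the $\tr$-correction terms land in strictly lower filtration degree because $\tr$ preserves $L\subset\env_{[\cdot,\cdot]}(L)$ and acts by derivations on products. Once that triangularity is pinned down, coalgebra compatibility and the algebra/unit/counit statements are formal consequences of Propositions \ref{prop: extension post-lie product} and \ref{prop: associative product}.
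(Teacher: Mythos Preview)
The paper does not prove this theorem; it is quoted verbatim as \cite{jacques2023post}, Theorem~2.9, and used as a black box. Your sketch is the standard argument and is correct: well-definedness follows from the universal property of $\env_{\db{\cdot,\cdot}}(L)$ together with $[x,y]_{\star_{\tr}}=\db{x,y}$ on $L$; the algebra-morphism property is by construction; the coalgebra compatibility comes from multiplicativity of $\Delta_\ast$ for $\star_{\tr}$ (Proposition~\ref{prop: associative product}) and primitivity of $L$; and bijectivity follows from the length-filtration/PBW argument using \eqref{eq:atrbbb}, which shows that the associated graded of $\Phi_{\tr}$ is the identity.

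One small sharpening: rather than speaking of an ``upper-triangular matrix with identity diagonal'' (which is awkward when $L$ is infinite-dimensional), it is cleaner to say that $\Phi_{\tr}$ respects the length filtration and induces the identity on the associated graded; bijectivity on each filtered piece, and hence globally, then follows by a straightforward induction on length. This is exactly what your \eqref{eq:atrbbb}-based computation establishes.
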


Now, fix a post-Lie algebra $(L,\tr,[\cdot,\cdot])$ and consider a $\td$-gpL-deformation $(L,\btr,\lb\cdot,\cdot\rb)$, defined by Theorem \ref{th: deformation post-Lie}, where we denote again $\btr:=\tr+\td$ and $\lb\cdot,\cdot\rb:=[\cdot,\cdot] + [\cdot,\cdot]_{\diamond}$. We remark that we also have
$$\llbracket\cdot,\cdot \rrbracket:= [\cdot,\cdot]_{\btr} + \lb\cdot,\cdot\rb;$$
indeed, for all $x,y\in L$:
\begin{align*}
    \llbracket x,y\rrbracket:=[x,y]_\tr + [x,y]&=([x,y]_\tr + [x,y]_{\diamond}) + ([x,y] - [x,y]_{\diamond})\\
    &=[x,y]_{\btr} + \lb x,y\rb
\end{align*}
Thus, applying the Guin-Oudom extension machinery of Theorems \ref{prop: extension post-lie product} and \ref{prop: associative product} to $(L,\btr,\lb\cdot,\cdot\rb)$, we obtain an associative Hopf algebra $(\env_{\lb\cdot,\cdot\rb}(L),\star_{\btr},\Delta_\ast)$ and Theorem \ref{theo: isomorphism between enveloping algebras} above shows the existence of an isomophism of Hopf algebras $\Phi_{\btr}$ as given in the Proposition below:

\begin{prop} Given a post-Lie algebra $(L,\tr,[\cdot,\cdot])$ and a gpL deformation $(L,\btr,\lb\cdot,\cdot\rb)$, where $\btr:=\tr+\td$ and $\lb\cdot,\cdot\rb:=[\cdot,\cdot] + [\cdot,\cdot]_{\diamond}$, we have an isomophism of Hopf algebras $\Phi_{\btr}$ given by:
\[\writefuniso{\Phi_{\btr}}{\left(\env_{\llbracket \cdot,\cdot \rrbracket}(L),\mathsf{conc},\Delta_\ast\right)}{\left(\env_{\lb\cdot,\cdot\rb}(L),\star_{\btr},\Delta_\ast\right)}{x_1 \cdots x_n}{x_1 \star_{\btr} \cdots \star_{\btr} x_n}\]
As a result, by composition we obtain the following isomophism of Hopf algebras:
\begin{equation}\label{eq: iso envl alg}
    \writefuniso{\Phi_{\tr}^{-1}\circ \Phi_{\btr}}{\left(\env_{[\cdot,\cdot]}(L),\star_{\tr},\Delta_\ast\right)}{\left(\env_{\lb\cdot,\cdot\rb}(L),\star_{\btr},\Delta_\ast\right)}{x_1 \star_{\tr} \cdots \star_{\tr} x_n}{x_1 \star_{\btr} \cdots \star_{\btr} x_n}
\end{equation}
\end{prop}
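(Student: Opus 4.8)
The plan is to obtain $\Phi_{\btr}$ as a direct instance of Theorem \ref{theo: isomorphism between enveloping algebras}, applied not to the post-Lie algebra $(L,\tr,[\cdot,\cdot])$ but to its $\td$-gpL-deformation $(L,\btr,\lb\cdot,\cdot\rb)$, and then to obtain the isomorphism \eqref{eq: iso envl alg} by pre-composing with $\Phi_{\tr}^{-1}$.

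First I would invoke Theorem \ref{th: deformation post-Lie} to recall that $(L,\btr,\lb\cdot,\cdot\rb)$ is itself a post-Lie algebra, the required compatibility conditions between $\td$, $\tr$ and $[\cdot,\cdot]$ being part of the standing hypotheses defining a gpL-deformation. Consequently the Guin--Oudom construction of Propositions \ref{prop: extension post-lie product} and \ref{prop: associative product} applies verbatim with $\btr$ in place of $\tr$, producing the associative product $\star_{\btr}$ on $\env_{\lb\cdot,\cdot\rb}(L)$ and the Hopf algebra $\big(\env_{\lb\cdot,\cdot\rb}(L),\star_{\btr},\Delta_\ast\big)$, where $\Delta_\ast$ denotes the coshuffle coproduct attached to a PBW basis of $\env_{\lb\cdot,\cdot\rb}(L)$.

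The one point requiring care is the identification of the composition Lie bracket. By Proposition \ref{prop: composition Lie bracket}, the bracket attached to $(L,\btr,\lb\cdot,\cdot\rb)$ is $[\cdot,\cdot]_{\btr}+\lb\cdot,\cdot\rb$, and since $\btr=\tr+\td$ one has $[x,y]_{\btr}=[x,y]_\tr+[x,y]_{\diamond}$, so that
\[[x,y]_{\btr}+\lb x,y\rb=[x,y]_\tr+[x,y]_{\diamond}+[x,y]-[x,y]_{\diamond}=[x,y]_\tr+[x,y]=\llbracket x,y\rrbracket,\]
which is exactly the composition bracket of $(L,\tr,[\cdot,\cdot])$. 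Hence Theorem \ref{theo: isomorphism between enveloping algebras}, read with $(L,\btr,\lb\cdot,\cdot\rb)$ in the role of the post-Lie algebra, immediately yields the Hopf algebra isomorphism $\Phi_{\btr}:\big(\env_{\llbracket\cdot,\cdot\rrbracket}(L),\conc,\Delta_\ast\big)\xrightarrow{\sim}\big(\env_{\lb\cdot,\cdot\rb}(L),\star_{\btr},\Delta_\ast\big)$, $x_1\cdots x_n\mapsto x_1\star_{\btr}\cdots\star_{\btr}x_n$.

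For \eqref{eq: iso envl alg}, I would simply observe that $\Phi_{\tr}$ and $\Phi_{\btr}$ share the same source Hopf algebra $\big(\env_{\llbracket\cdot,\cdot\rrbracket}(L),\conc,\Delta_\ast\big)$, so that $\Phi_{\tr}^{-1}\circ\Phi_{\btr}$ is an isomorphism of Hopf algebras from $\big(\env_{[\cdot,\cdot]}(L),\star_{\tr},\Delta_\ast\big)$ onto $\big(\env_{\lb\cdot,\cdot\rb}(L),\star_{\btr},\Delta_\ast\big)$; its value on the spanning family follows from $\Phi_{\tr}^{-1}(x_1\star_{\tr}\cdots\star_{\tr}x_n)=x_1\cdots x_n$ (the concatenation class in $\env_{\llbracket\cdot,\cdot\rrbracket}(L)$) followed by $\Phi_{\btr}(x_1\cdots x_n)=x_1\star_{\btr}\cdots\star_{\btr}x_n$. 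There is no genuine obstacle here beyond keeping track of which product lives on which copy of the underlying vector space; the only thing to double-check is that Theorem \ref{theo: isomorphism between enveloping algebras} is indeed being applied to a bona fide post-Lie algebra, which is precisely what Theorem \ref{th: deformation post-Lie} guarantees.
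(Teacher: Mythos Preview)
Your proposal is correct and follows essentially the same approach as the paper: the paper's argument is the short paragraph immediately preceding the proposition, which verifies that $[\cdot,\cdot]_{\btr}+\lb\cdot,\cdot\rb=\llbracket\cdot,\cdot\rrbracket$, applies the Guin--Oudom machinery to the deformed post-Lie algebra, and then invokes Theorem \ref{theo: isomorphism between enveloping algebras} and composes with $\Phi_{\tr}^{-1}$. Your write-up makes the same steps explicit, with the same key computation for the composition bracket.
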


The following commutative diagram summarises the lasts two Propositions:
\begin{equation}\label{diag: envl alg}
\begin{tikzcd}[scale cd=1,sep=large]
    & \lp \env_{\llbracket \cdot,\cdot \rrbracket}(L),\mathsf{conc}, \Delta_\ast\ \rp \arrow["\sim"']{dr}{\Phi_{\btr}} \\
    \left(\env_{[\cdot,\cdot]}(L),\star_{\tr}, \Delta_\ast\right) \arrow["\sim"']{ur}{\Phi_{\tr}^{-1}}\arrow[rr,"\sim", "\Phi_{\tr}^{-1}\circ \Phi_{\btr}"'] && \left(\env_{\lb\cdot,\cdot\rb}(L),\star_{\btr}, \Delta_\ast\right)
  \end{tikzcd}
\end{equation}

\subsection{Pre-Lie deformations.}

In that subsection, we derive conditions such that a gpL-deformation can turn a post-lie algebra to turn a post-Lie algebra $(L,\tr,[\cdot,\cdot])$ into a pre-Lie algebra $(L,\btr)$. An example for such a pre-Lie deformation will be given later in Theorem \ref{theo: pre-Lie deformation RG} in the context of regularity structures. We recall that in the case if $\lb\cdot,\cdot\rb\equiv 0$, the monomials of the enveloping algebra $\env_{\lb\cdot,\cdot\rb}(L)$ are symmetric, which means that $\env_{\lb\cdot,\cdot\rb}(L)=\sym(L)$ the \textbf{symmetric algebra} on $L$, which is the symmetrization of $\tens(L)=\bigoplus_{n\in \N} L^{\otimes n}$ defined by quotienting it by the bilateral ideal generated by the elements $x\otimes y - y\otimes x$. 

\begin{prop}[Pre-Lie deformation]\label{prop: pre-Lie deformation}
    Let $(L,\td)$ be a pre-Lie algebra and let $\tr:L\otimes L\to L$ be a bilinear operation, such that $x\tr \cdot\in Der(L,\td)$ then:
    \[(L,\tr,[\cdot,\cdot]_{\diamond})~\text{is a post-Lie algebra} \Rightarrow (L,\btr=\tr + \td)~\text{is a pre-Lie algebra}\]
    The reverse implication is garanteed if we suppose moreover that $[x,y]_{\diamond}\td z=0$ for all $x,y,z\in L$. Moreover, as in \eqref{eq: iso envl alg}, there is an isomophism of Hopf algebras:
    $$\Phi_{\tr}^{-1}\circ \Phi_{\btr}:(\env_{[\cdot,\cdot]_{\diamond}}(L),\star_\tr,\Delta_\ast)\xrightarrow{\sim}(\sym(L),\star_{\btr},\Delta_\ast)$$
\end{prop}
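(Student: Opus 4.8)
The plan is to exhibit $(L,\btr)$, with $\btr=\tr+\td$, as the $\td$-gpL-deformation of the post-Lie algebra $(L,\tr,[\cdot,\cdot]_\diamond)$ in the sense of Theorem \ref{th: deformation post-Lie}, and to read the pre-Lie conclusion off the fact that the deformed bracket is trivial; the converse will be obtained by a direct associator computation, and the Hopf-algebra statement will be the corresponding instance of \eqref{eq: iso envl alg}.

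For the forward implication I would check the hypotheses of Theorem \ref{th: deformation post-Lie} for the post-Lie algebra $(L,\tr,[\cdot,\cdot]_\diamond)$ together with the operation $\td$. The key observation is that here $[\cdot,\cdot]_\diamond$ is simultaneously the Lie bracket of the underlying Lie algebra \emph{and} the commutator of the connection $\td$, so \eqref{eq: torsion} forces $\tors{\td,[\cdot,\cdot]_\diamond}\equiv 0$, whence a fortiori $\td\tors{\td,[\cdot,\cdot]_\diamond}\equiv 0$; and then \eqref{eq: curvature vs torsion} reduces to $\curv{\td,[\cdot,\cdot]_\diamond}(x,y,z)=\ass_{\td}(x,y,z)-\ass_{\td}(y,x,z)$, which vanishes because $(L,\td)$ is pre-Lie. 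The remaining compatibility $x\tr\cdot\in\Der(L,\td)$ is assumed, so Theorem \ref{th: deformation post-Lie} applies and gives a post-Lie algebra $(L,\btr,\lb\cdot,\cdot\rb)$ with $\lb\cdot,\cdot\rb=-\tors{\td,[\cdot,\cdot]_\diamond}\equiv 0$; a post-Lie algebra with vanishing bracket is exactly a pre-Lie algebra, so $(L,\btr)$ is pre-Lie.

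For the converse I would expand the associator of $\btr=\tr+\td$ and collapse the cross terms. Using $x\tr\cdot\in\Der(L,\td)$ to rewrite $x\tr(y\td z)-(x\tr y)\td z=y\td(x\tr z)$, then antisymmetrizing in $x,y$ and using the pre-Lie identity for $\td$ to cancel $\ass_{\td}(x,y,z)-\ass_{\td}(y,x,z)$, one should arrive at
\[\ass_{\btr}(x,y,z)-\ass_{\btr}(y,x,z)=\bigl(\ass_{\tr}(x,y,z)-\ass_{\tr}(y,x,z)\bigr)-[x,y]_\diamond\tr z .\]
If $(L,\btr)$ is pre-Lie the left-hand side is zero, which is precisely the post-Lie associator axiom \eqref{eq: post-Lie associator condition} for $(L,\tr,[\cdot,\cdot]_\diamond)$; condition (1) of Definition of post-Lie holds since the commutator of the pre-Lie product $\td$ satisfies Jacobi, and condition (2) follows from Lemma \ref{lem: Der alg. ==> Der Lie alg.} applied to $x\tr\cdot\in\Der(L,\td)$. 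The additional hypothesis $[x,y]_\diamond\td z=0$ is the one the statement supplies for this direction, and I would pin down explicitly the step of the above bookkeeping at which it is invoked.

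Finally, the Hopf-algebra isomorphism is immediate once the forward direction is in place: the $\td$-gpL-deformation has trivial bracket $\lb\cdot,\cdot\rb\equiv 0$, so $\env_{\lb\cdot,\cdot\rb}(L)=\sym(L)$, and $\Phi_{\tr}^{-1}\circ\Phi_{\btr}$ is the specialization to this situation of the isomorphism \eqref{eq: iso envl alg} and of the commutative diagram \eqref{diag: envl alg}. I expect the main obstacle to be the converse implication — keeping the associator expansion honest and locating the precise use of the extra hypothesis $[x,y]_\diamond\td z=0$; by contrast the forward implication and the Hopf-algebra assertion are direct applications of machinery already established.
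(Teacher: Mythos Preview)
Your forward implication and the Hopf-algebra statement are argued exactly as in the paper: you apply Theorem~\ref{th: deformation post-Lie} to $(L,\tr,[\cdot,\cdot]_\diamond)$ with the connection $\td$, observe $\tors{\td,[\cdot,\cdot]_\diamond}\equiv 0$, and read off the curvature-vanishing from the pre-Lie identity for $\td$; the Hopf isomorphism is then the instance of \eqref{eq: iso envl alg} with $\lb\cdot,\cdot\rb\equiv 0$.

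For the converse you take a genuinely different route. The paper does \emph{not} expand $\ass_{\btr}$ by hand; instead it applies Theorem~\ref{th: deformation post-Lie} once more, this time as a $(-\td)$-gpL deformation of the pre-Lie algebra $(L,\btr,\0)$. There the extra hypothesis $[x,y]_\diamond\td z=0$ is exactly what kills the curvature, since by \eqref{eq: curvature vs torsion} one has $\curv{-\td,\0}(x,y,z)=\ass_\td(x,y,z)-\ass_\td(y,x,z)+[x,y]_\diamond\td z=[x,y]_\diamond\td z$. Your direct associator computation, by contrast, is correct as written and in fact \emph{never uses} the extra hypothesis: once you have established
\[
\ass_{\btr}(x,y,z)-\ass_{\btr}(y,x,z)=\bigl(\ass_\tr(x,y,z)-\ass_\tr(y,x,z)\bigr)-[x,y]_\diamond\tr z,
\]
the vanishing of the left-hand side gives axiom \eqref{eq: post-Lie associator condition} for $(L,\tr,[\cdot,\cdot]_\diamond)$, while axioms (1) and (2) follow from the pre-Lie property of $\td$ and Lemma~\ref{lem: Der alg. ==> Der Lie alg.}, respectively. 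So rather than ``pinning down where it is invoked'', you should note that your elementary argument actually proves the converse under the standing assumptions alone, i.e.\ it is sharper than the statement (and than the paper's proof, which genuinely needs the hypothesis to make the $(-\td)$-deformation machinery run).
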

\begin{proof} To show the implication, we apply a $\td$-gpL deformation of Theorem \ref{th: deformation post-Lie} on the post-Lie algebra $(L,\tr,[\cdot,\cdot])$. Let us verify that the compatibility conditions are satisfied:\\
By definition \ref{def: torsion and curvature} of the torsion of a bilinear operation on a Lie algebra, we trivially have that the torsion of $\td$ on $(L,[\cdot,\cdot])$ is given by $\mathbf{T}^{\td,[\cdot,\cdot]}=\0$, and its covariant derivative, given by the equality \eqref{eq: covariant derivative torsion} is therefore null because of the Jacobi identity. To compute the curvature, we use the equality \eqref{eq: curvature vs torsion} where the torsion term vanishes, which aims to the following equality, for all $x,y,z\in L$:
\[\mathbf{R}^{\td,[\cdot,\cdot]}(x,y,z)=\ass_{\td}(x,y,z) - \ass_{\td}(y,x,z)=0,\]
by the definition of a pre-Lie product $\td$, see equality \eqref{eq: pre-Lie equality}.\\
Now, to show the reverse implication (under the extra hypothesis), denoting by $\0$ the null bilinear operation, we apply a $(-\td)$-gpL deformation on the post-Lie algebra $(L,\tr + \td,\0)$:\\
The torsion of $-\td$ on $(L,\0)$ is simply equal to $\mathbf{T}^{-\td,\0}=-[\cdot,\cdot]_{\diamond}$ and the Lemma \ref{lem: Der alg. ==> Der Lie alg.} indicates that the covariant derivative of the torsion is null $\td\mathbf{T}^{-\td,\0}=\0$. Then, for the curvature, using \eqref{eq: curvature vs torsion}:
\[\mathbf{R}^{-\td,\0}(x,y,z)=\ass_{\td}(x,y,z) - \ass_{\td}(y,x,z) + [x,y]_{\diamond}\td z=0\]
\end{proof}

The advantage of using the symmetric tensor algebra $\sym(L)$ is that given a basis $\B_L$ of $L$, it has a canonical basis formed by the monomials:
\[\ind \cup \{x_1\cdots x_n,~n\geq 1,~(x_1,\ldots, x_n)\in \B_L^n\}\]
Therefore, using the symmetric tensor algebra $\sym(L)$, instead of using the enveloping algebra $\envU(L)$, avoids the need of defining a total order on the basis elements $\B_L$ of $L$ to apply the Poincaré-Birkhoff-Witt theorem for defining a linear basis for $\envU(L)$, which is an arbitrary choice, see \cite{jacques2023post}[Theorem 2.5]. However, in some practical construction of such a product $\tr$, a total order on non-commuting basis elements of $\B_L$ is used, see remark \ref{rem: construction pre-Lie product} below:

\begin{remark}\label{rem: construction pre-Lie product}
    Given a post-Lie algebra $(L,\tr,[\cdot,\cdot])$, in order to apply the preceding Proposition, a strategy is to try to find a pre-Lie product $\td:L\otimes L\to L$ whose commutator gives back the Lie bracket $[\cdot,\cdot]_{\diamond}=[\cdot,\cdot]$. It can be made by fixing a linear basis $\B_L$ of $L$ for an index set $I$ and defining a strict total order relation $<$ on $\{(x,y)\in\B_L^2,~[x,y]\neq 0\}$. Then we can define $\td:L\otimes L\to L$ on $\B_L$ as:
    \begin{equation}\label{eq: pre-Lie product in coordinates}
        x\td y := \begin{cases}
        [x,y]\quad\text{if}\quad [x,y]\neq 0~\wedge~ x<y\\
        0\qquad\qquad\text{else}
    \end{cases}
    \end{equation}
    Note that such situation occurs in regularity structures (see Subsection \ref{subsec: Pre-Lie regularity structures} below).
\end{remark}

\section{Geometric post-Lie deformation of the J--Z post-Lie algebra and applications.}\label{sec: deformation post-Lie alg of derivations}

In that section we consider a commutative and associative algebra $(\A,\cdot)$ and we will endow $L:=\A\otimes \Der(\A)$ with an algebraic structure given by two bilinear operations $(\btr,\lb\cdot,\cdot\rb)$, which generalizes the one given in \cite{jacques2023post} and satisfy the post-Lie property under certain hypothesis.

We recall that in \cite{jacques2023post}, the following canonical post-Lie structure has been derived:
\begin{theorem}[\cite{jacques2023post} Theorem 3.1]\label{theo: post-Lie structure from derivations}
Defining on the space $L:=\A\otimes \Der(\A)$ two bilinear operations $\tr,[\cdot,\cdot]:L\otimes L\to L$ given for all $a_1,a_2\in\A$ and $D_1,D_2\in \Der(\A)$ by: 
\begin{equation}\label{eq: post-lie product derivation}
    a_1\otimes D_1\tr a_2 \otimes D_2:=a_1 D_1(a_2)\otimes D_2,
\end{equation}
\begin{equation}\label{eq: post-lie bracket derivation}
    [a_1\otimes D_1, a_2 \otimes D_2]:=a_1a_2\otimes [D_1,D_2]_\circ.
\end{equation}
then $(L,\tr,[\cdot,\cdot])$ is a (left) post-Lie algebra. 
\end{theorem}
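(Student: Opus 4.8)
The plan is to verify directly the three axioms of Definition \ref{def: post-Lie} for the pair $(\tr,[\cdot,\cdot])$ on $L=\A\otimes\Der(\A)$, using bilinearity to reduce everything to elements of the form $a\otimes D$. First I would check that $[\cdot,\cdot]$ is a Lie bracket: antisymmetry is immediate from the antisymmetry of $[\cdot,\cdot]_\circ$ and commutativity of $\A$, and the Jacobi identity for $[\cdot,\cdot]$ follows termwise from the Jacobi identity for $[\cdot,\cdot]_\circ$ on $\Der(\A)$ together with associativity and commutativity of the product on $\A$ (the $\A$-factors multiply to the same monomial $a_1a_2a_3$ in all three cyclic terms, so the bracket inherits Jacobi). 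One must also recall, as noted in the excerpt, that $[D_1,D_2]_\circ$ is again a derivation, so $[\cdot,\cdot]$ indeed maps into $L$.

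Next I would verify the derivation condition \eqref{eq: post-Lie derivation condition}, i.e.\ that $x\tr\cdot$ is a derivation of $(L,[\cdot,\cdot])$. Writing $x=a_1\otimes D_1$, $y=a_2\otimes D_2$, $z=a_3\otimes D_3$, the left side is $a_1 D_1(a_2a_3)\otimes[D_2,D_3]_\circ$, and expanding $D_1(a_2a_3)=D_1(a_2)a_3+a_2D_1(a_3)$ via the Leibniz rule splits this into exactly the two terms $[x\tr y,z]+[y,x\tr z]$. This is a short computation and should present no obstacle.

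The main work is the associator condition \eqref{eq: post-Lie associator condition}. Here I would compute $\ass_\tr(x,y,z)$ explicitly: with the notation above, $x\tr(y\tr z)=a_1D_1\big(a_2D_2(a_3)\big)\otimes D_3$ while $(x\tr y)\tr z=a_1D_1(a_2)\,D_2(a_3)\otimes D_3$, so using the Leibniz rule $D_1\big(a_2D_2(a_3)\big)=D_1(a_2)D_2(a_3)+a_2D_1(D_2(a_3))$ one gets $\ass_\tr(x,y,z)=a_1a_2\,(D_1\circ D_2)(a_3)\otimes D_3$. Antisymmetrizing in $x,y$ then yields $\ass_\tr(x,y,z)-\ass_\tr(y,x,z)=a_1a_2\,[D_1,D_2]_\circ(a_3)\otimes D_3$, which is precisely $[x,y]\tr z=(a_1a_2\otimes[D_1,D_2]_\circ)\tr(a_3\otimes D_3)$ by the definition \eqref{eq: post-lie product derivation} of $\tr$. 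I expect the only subtlety to be bookkeeping: one must apply the generalized Leibniz rule carefully and keep track of which $\A$-elements are arguments of derivations versus plain multipliers, but no deep idea is needed — the post-Lie structure is, as the authors say, canonical, and all three axioms descend from the associativity/commutativity of $\A$, the Leibniz rule, and the Lie algebra structure of $(\Der(\A),[\cdot,\cdot]_\circ)$.
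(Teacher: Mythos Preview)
Your proposal is correct: each of the three post-Lie axioms is verified by the direct computations you outline, and the bookkeeping is accurate (in particular $\ass_\tr(x,y,z)=a_1a_2\,(D_1\circ D_2)(a_3)\otimes D_3$ is exactly right). Note that in this paper the theorem is simply quoted from \cite{jacques2023post} without a proof, so there is no argument here to compare against; your direct verification via the Leibniz rule and the Lie algebra structure of $(\Der(\A),[\cdot,\cdot]_\circ)$ is the natural approach and is essentially what one finds in the cited reference.
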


\subsection{Torsion and curvature on \texorpdfstring{\(\A\otimes \Der(\A)\)}{} and geometric post-Lie algebra structures.}\label{subsec: pL alg of derivations}
In all of that subsection, we denote by \(\D\subset\Der(\A)\) a subspace of \(\Der(\A)\), which is stable by the commutator of the composition product: \[[\D,\D]_\circ\subset \D.\]
The space $\A\otimes \D$ can be endowed with a Lie bracket $[\cdot,\cdot]$, defined for all $a_1,a_2\in \A$ and $D_1, D_2\in \D$ by:
\begin{equation}
    [a_1\otimes D_1,a_2\otimes D_2]:=a_1a_2\otimes [D_1,D_2]_\circ
\end{equation}
which trivially turns $(\A\otimes \D,[\cdot,\cdot])$ into a Lie algebra:

In a similar way, a bilinear operation $\td\in \Hom(\D^{\otimes 2},\D)$ on $\D$ can be lifted on $\A\otimes \D$ into a bilinear operation $\td\in \Hom((\A\otimes \D)^{\otimes 2},\A\otimes \D)$ (using the same symbol, since it does not induce any confusion), which is defined for all $a_1,a_2\in \A$ and $D_1, D_2\in \D$ by:
\begin{equation}\label{eq: extension connections on derivations}
    (a_1\otimes D_1)\td(a_2\otimes D_2):=a_1a_2\otimes (D_1 \td D_2)
\end{equation}

Denoting again by $\tors{\td,[\cdot,\cdot]}$ the torsion and by $\curv{\td,[\cdot,\cdot]}$ the  curvature of $\td$ on the Lie algebra $(\A\otimes\D,[\cdot,\cdot])$, we obtain for all $a_1,a_2,a_3\in\A$ and all $D_1,D_2,D_3\in \D$ that :
\begin{align*}
    \tors{\td,[\cdot,\cdot]}(a_1\otimes D_1,a_2\otimes D_2)&=a_1a_2\otimes \tors{\td,[\cdot,\cdot]_\circ}(D_1,D_2)\\
    &=a_1a_2\otimes ([D_1,D_2]_{\diamond}-[D_1,D_2]_\circ)\\
    \curv{\td,[\cdot,\cdot]}(a_1\otimes D_1,a_2\otimes D_2,a_3\otimes D_3)&=a_1a_2a_3\otimes \curv{\td,[\cdot,\cdot]_\circ}(D_1,D_2,D_3)
\end{align*}
\medskip
Schematically speaking, we have the following transport of structure:
\begin{equation*}
\boxed{
\begin{aligned}
    \underset{[D_1,D_2]_\circ}{\text{Commutator $[\cdot,\cdot]_\circ$ on $\D$}}~&\Longrightarrow~ \underset{[a_1\otimes D_1,a_2\otimes D_2]=a_1a_2\otimes [D_1,D_2]_\circ}{\text{Lie bracket $[\cdot,\cdot]$ on $\A\otimes \D$}}\\
    \underset{D_1 \td D_2}{\text{Connection $\td$ on $\D$}}~&\Longrightarrow~ \underset{(a_1\otimes D_1)\td(a_2\otimes D_2):=a_1a_2\otimes D_1 \td D_2}{\text{Connection $\td$ on $\A\otimes \D$}}\\
    \underset{\tors{\td,[\cdot,\cdot]_\circ}(D_1,D_2)}{\text{Torsion $\tors{\td,[\cdot,\cdot]_\circ}$ on $\D$}} ~&\Longrightarrow ~ \underset{\tors{\td,[\cdot,\cdot]}(a_1\otimes D_1,a_2\otimes D_2)=a_1a_2\otimes \tors{\td,[\cdot,\cdot]_\circ}(D_1,D_2)}{\text{Torsion $\tors{\td,[\cdot,\cdot]}$ on $\A\otimes \D$}}\\
    \underset{\curv{\td,[\cdot,\cdot]_\circ}(D_1,D_2,D_3)}{\text{Curvature $\curv{\td,[\cdot,\cdot]_\circ}$ on $\D$}} ~&\Longrightarrow~ \underset{\substack{\curv{\td,[\cdot,\cdot]}(a_1\otimes D_1,a_2\otimes D_2,a_3\otimes D_3)\\=a_1a_2a_3\otimes \curv{\td,[\cdot,\cdot]_\circ}(D_1,D_2,D_3)}}{\text{Curvature $\curv{\td,[\cdot,\cdot]}$ on $\A\otimes\D$}}
\end{aligned}}
\end{equation*}
\medskip 

The covariant derivative of the torsion $(a_1\otimes D_1) \td \tors{\td,[\cdot,\cdot]}$, along $a_1\otimes D_1\in\A\otimes \D$ is therefore given by:
$$\Big( (a_1\otimes D_1) \td \tors{\td,[\cdot,\cdot]}\Big)(a_2\otimes D_2,a_3\otimes D_3):=a_1a_2a_3\otimes (D_1 \td\tors{\td,[\cdot,\cdot]_\circ})(D_2,D_3)$$
In particular, we obtain that:
\begin{align*}
(a_1\otimes D_1) \td\tors{\td,[\cdot,\cdot]}=0 ~ &\Leftrightarrow ~ D_1 \td\tors{\td,[\cdot,\cdot]_\circ}= 0\\
\curv{\td,[\cdot,\cdot]}(a_1\otimes D_1,a_2\otimes D_2,a_3\otimes D_3)=0 ~&\Leftrightarrow~ \curv{\td,[\cdot,\cdot]_\circ}(D_1,D_2,D_3)=0
\end{align*}
We obtain as a direct corollary of the Theorem \ref{th: geometric post-Lie algebra} that:
\begin{corollary}[of Theorem \ref{th: geometric post-Lie algebra}]
    $(\A\otimes \D,\td,-\tors{\td,[\cdot,\cdot]})$ is a post-Lie algebra.
\end{corollary}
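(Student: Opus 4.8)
The plan is to reduce the claimed corollary directly to Theorem \ref{th: geometric post-Lie algebra} applied to the Lie algebra $(\A\otimes\D,[\cdot,\cdot])$, using the transport-of-structure identities already established in the subsection. Concretely, Theorem \ref{th: geometric post-Lie algebra} says: if $\td$ is a bilinear operation on a Lie algebra with null curvature and null covariant derivative of the torsion (``constant torsion''), then $(L,\td,-\tors{\td,[\cdot,\cdot]})$ is a post-Lie algebra. So all I need is to verify that the lifted connection $\td$ on $L=\A\otimes\D$ inherits these two hypotheses from the corresponding hypotheses on $\D$.

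First I would recall the computations displayed just above the statement: for all $a_i\in\A$ and $D_i\in\D$,
\[
\curv{\td,[\cdot,\cdot]}(a_1\otimes D_1,a_2\otimes D_2,a_3\otimes D_3)=a_1a_2a_3\otimes\curv{\td,[\cdot,\cdot]_\circ}(D_1,D_2,D_3),
\]
and likewise
\[
\bigl((a_1\otimes D_1)\td\tors{\td,[\cdot,\cdot]}\bigr)(a_2\otimes D_2,a_3\otimes D_3)=a_1a_2a_3\otimes(D_1\td\tors{\td,[\cdot,\cdot]_\circ})(D_2,D_3).
\]
These two identities are exactly the content of the boxed ``transport of structure'' diagram and the displayed equivalences, so I would simply cite them. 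From the first identity, if $\curv{\td,[\cdot,\cdot]_\circ}\equiv0$ on $\D$ then $\curv{\td,[\cdot,\cdot]}\equiv0$ on $\A\otimes\D$, since elements of the form $a\otimes D$ span $\A\otimes\D$ and the curvature is trilinear. From the second identity, if $D\td\tors{\td,[\cdot,\cdot]_\circ}\equiv0$ on $\D$ then $(a\otimes D)\td\tors{\td,[\cdot,\cdot]}\equiv0$ on $\A\otimes\D$, again by bilinearity/trilinearity and the fact that the simple tensors span. Hence both hypotheses of Theorem \ref{th: geometric post-Lie algebra} hold for $\td$ on $(\A\otimes\D,[\cdot,\cdot])$.

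Applying Theorem \ref{th: geometric post-Lie algebra} then immediately gives that $(\A\otimes\D,\td,-\tors{\td,[\cdot,\cdot]})$ is a post-Lie algebra, which is the assertion. I should note that the corollary as stated is slightly elliptical — it presupposes the standing hypotheses of the subsection, namely that $\td$ on $\D$ has null curvature $\curv{\td,[\cdot,\cdot]_\circ}\equiv0$ and constant torsion $D\td\tors{\td,[\cdot,\cdot]_\circ}\equiv0$ — so in the write-up I would make this hypothesis explicit, matching the statement of the earlier Corollary ``of Theorem \ref{th: geometric post-Lie algebra}'' for $\Der(\A)$. The Lie algebra hypothesis on $(\A\otimes\D,[\cdot,\cdot])$ needed to invoke Theorem \ref{th: geometric post-Lie algebra} is also already in place: it was observed that $[\cdot,\cdot]$ turns $\A\otimes\D$ into a Lie algebra, using $[\D,\D]_\circ\subset\D$.

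There is essentially no obstacle here: the only thing to be careful about is that ``$\equiv 0$ on $\D$'' really does propagate to ``$\equiv 0$ on $\A\otimes\D$,'' which is a one-line spanning argument because both the curvature and the covariant derivative of the torsion are multilinear maps whose values on simple tensors $a\otimes D$ are given by the displayed formulas; since every element of $\A\otimes\D$ is a finite sum of such simple tensors, vanishing on simple tensors forces vanishing everywhere. So the proof is genuinely just ``combine the transport-of-structure identities with Theorem \ref{th: geometric post-Lie algebra}.''
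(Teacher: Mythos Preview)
Your proposal is correct and matches the paper's approach exactly: the paper gives no proof beyond the sentence ``We obtain as a direct corollary of the Theorem \ref{th: geometric post-Lie algebra} that,'' relying precisely on the transport-of-structure identities and equivalences displayed just above the statement. Your explicit spanning argument and your observation that the corollary tacitly presupposes the hypotheses $\curv{\td,[\cdot,\cdot]_\circ}\equiv 0$ and $\td\tors{\td,[\cdot,\cdot]_\circ}\equiv 0$ on $\D$ are both accurate and simply make explicit what the paper leaves implicit.
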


Then we remark that by definition of $\tr$ in \eqref{eq: post-lie product derivation} and of $\td$ in \eqref{eq: extension connections on derivations}, we have for all $a\in\A$ and $D\in \D$, that $(a\otimes D) \tr \cdot \in\Der (\A\otimes \D,\td)$, which is the compatibility condition \eqref{eq: compatibility derivation}, which gives us the following corollary:
\begin{corollary}[of Proposition \ref{prop: pre-Lie deformation}]\label{coro: pre-Lie deformation for post-Lie alg of der.}
    If $\td$ is a pre-Lie product on $\D$, such that $[\cdot,\cdot]_{\diamond}=[\cdot,\cdot]_\circ$, then $(\A\otimes \D,\tr+\td)$ is a pre-Lie algebra.
\end{corollary}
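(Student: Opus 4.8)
The plan is to obtain Corollary~\ref{coro: pre-Lie deformation for post-Lie alg of der.} as a direct instance of Proposition~\ref{prop: pre-Lie deformation}, applied to the Lie algebra $\A\otimes\D$. First I would recall the hypotheses of that Proposition: it asks for a pre-Lie product on the ambient space, for a bilinear operation $\tr$ which is a derivation for that pre-Lie product (i.e. $x\tr\,\cdot\,\in\Der(L,\td)$), and for the triple $(L,\tr,[\cdot,\cdot]_{\diamond})$ to be a post-Lie algebra; under these conditions it concludes that $(L,\tr+\td)$ is pre-Lie. So the whole task is to verify, under the two stated assumptions ($\td$ pre-Lie on $\D$ with $[\cdot,\cdot]_{\diamond}=[\cdot,\cdot]_\circ$), that each ingredient holds on $L=\A\otimes\D$.

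Next I would check the three ingredients in order. \emph{(i) $\td$ lifts to a pre-Lie product on $\A\otimes\D$.} Using the lifting formula \eqref{eq: extension connections on derivations}, a short computation of $\ass_\td$ on pure tensors shows $\ass_\td(a_1\otimes D_1,a_2\otimes D_2,a_3\otimes D_3)=a_1a_2a_3\otimes\ass_\td(D_1,D_2,D_3)$, since the $\A$-components always multiply commutatively and the nested $\td$'s only act on the $\D$-component; hence the pre-Lie identity \eqref{eq: pre-Lie equality} on $\D$ transports verbatim to $\A\otimes\D$. (Alternatively, one may invoke the transport-of-structure box already displayed, together with the curvature-versus-associator identity \eqref{eq: curvature vs torsion}, since $\tors{\td,[\cdot,\cdot]}$ transports the torsion of $\td$ on $\D$, which vanishes precisely when $[\cdot,\cdot]_{\diamond}=[\cdot,\cdot]_\circ$.) \emph{(ii) The derivation condition.} This is exactly the remark preceding the Corollary: by the formulas \eqref{eq: post-lie product derivation} and \eqref{eq: extension connections on derivations}, $(a\otimes D)\tr\bigl((a_1\otimes D_1)\td(a_2\otimes D_2)\bigr)$ and $\bigl((a\otimes D)\tr(a_1\otimes D_1)\bigr)\td(a_2\otimes D_2)+(a_1\otimes D_1)\td\bigl((a\otimes D)\tr(a_2\otimes D_2)\bigr)$ both equal $aD(a_1a_2)\otimes(D_1\td D_2)$, using the Leibniz rule $D(a_1a_2)=D(a_1)a_2+a_1D(a_2)$; so $(a\otimes D)\tr\,\cdot\,\in\Der(\A\otimes\D,\td)$. \emph{(iii) The post-Lie triple.} Since $[\cdot,\cdot]_{\diamond}$ is, by the lifting, the bracket $a_1a_2\otimes[D_1,D_2]_{\diamond}=a_1a_2\otimes[D_1,D_2]_\circ=[\cdot,\cdot]$, the triple $(\A\otimes\D,\tr,[\cdot,\cdot]_{\diamond})$ coincides with $(\A\otimes\D,\tr,[\cdot,\cdot])$, which is a post-Lie algebra by Theorem~\ref{theo: post-Lie structure from derivations} (whose proof only uses commutativity and associativity of $\A$ and does not depend on the choice of derivation subspace, so it applies with $\D$ in place of $\Der(\A)$).

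With (i)--(iii) in hand, Proposition~\ref{prop: pre-Lie deformation} applies directly and yields that $(\A\otimes\D,\btr)=(\A\otimes\D,\tr+\td)$ is a pre-Lie algebra, which is the claim. I do not anticipate a genuine obstacle here: the Corollary is essentially a bookkeeping exercise combining the transport-of-structure computations already laid out before the statement with the Proposition it cites. If anything, the one point deserving a sentence of care is to make sure the hypothesis $[\cdot,\cdot]_{\diamond}=[\cdot,\cdot]_\circ$ on $\D$ really does force $[\cdot,\cdot]_{\diamond}=[\cdot,\cdot]$ on $\A\otimes\D$ (rather than merely on pure tensors); this follows because both brackets are bilinear and the pure tensors $a\otimes D$ span $\A\otimes\D$, and the lifted $\td$ and the Lie bracket are defined to agree on that spanning set.
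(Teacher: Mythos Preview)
Your proposal is correct and follows essentially the same approach as the paper. The paper presents this Corollary without a dedicated proof: the derivation condition (your point (ii)) is observed in the sentence immediately preceding the statement, the transport of the pre-Lie structure and of the commutator (your points (i) and (iii)) are implicit in the transport-of-structure discussion earlier in the subsection, and the post-Lie property of $(\A\otimes\D,\tr,[\cdot,\cdot])$ is Theorem~\ref{theo: post-Lie structure from derivations}; you have simply made these three verifications explicit before invoking Proposition~\ref{prop: pre-Lie deformation}.
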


\medskip

We give below an adaptation of the Theorem \ref{th: deformation post-Lie} to our present setting where $L:=\A\otimes \Der(\A)$, which is a generalisation of Theorem \ref{theo: post-Lie structure from derivations} (\cite{jacques2023post}[Theorem 3.1.]) :

\begin{theorem}\label{th: post-Lie conditions on A tens Der(A)}
    Let $\D\subset Der(\A)$ a subspace of $Der(\A)$ stable by the commutator of the composition product $[\cdot,\cdot]_\circ$ and let $\td:\D\otimes \D\to\D$ be a bilinear operation. Using the notations of Theorem \ref{th: deformation post-Lie}, that is to say:
\begin{align}
    (a_1\otimes D_1)\btr (a_2 \otimes D_2)&:=a_1 D_1(a_2)\otimes D_2 + a_1a_2\otimes (D_1 \td D_2),\label{eq: post-lie product geometric derivation alg}\\
    \lb a_1\otimes D_1, a_2 \otimes D_2\rb&:=a_1a_2\otimes \Big([D_1,D_2]_\circ-[D_1,D_2]_{\diamond}\Big),\label{eq: post-lie bracket geometric derivation alg}
\end{align}
we have that if $\curv{\td,[\cdot,\cdot]_\circ}\equiv 0$ and $\td\tors{\td,[\cdot,\cdot]_\circ}\equiv 0$, then $(\A\otimes \D,\btr,\lb\cdot,\cdot\rb)$ is a post-Lie algebra.
\end{theorem}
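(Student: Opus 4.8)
The plan is to deduce Theorem~\ref{th: post-Lie conditions on A tens Der(A)} directly from Theorem~\ref{th: deformation post-Lie}, by verifying that the hypotheses of the latter are met for the post-Lie algebra $(L,\tr,[\cdot,\cdot])$ of Theorem~\ref{theo: post-Lie structure from derivations} restricted to $L:=\A\otimes\D$, with the perturbation given by the lift of $\td$ to $\A\otimes\D$ defined in \eqref{eq: extension connections on derivations}. Concretely, one must check three things: that $\A\otimes\D$ is stable under $\tr$, $[\cdot,\cdot]$ and $\td$ so that all operations are well defined on $L$; that the constant-torsion and flatness conditions $\td\tors{\td,[\cdot,\cdot]}\equiv 0$ and $\curv{\td,[\cdot,\cdot]}\equiv 0$ hold for the lifted $\td$ on the Lie algebra $(\A\otimes\D,[\cdot,\cdot])$; and that the compatibility condition \eqref{eq: compatibility derivation}, namely $x\tr\cdot\in\Der(\A\otimes\D,\td)$ for all $x\in\A\otimes\D$, is satisfied.

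First I would record stability: $[\D,\D]_\circ\subset\D$ is assumed, so $[\cdot,\cdot]$ preserves $\A\otimes\D$ by \eqref{eq: post-lie bracket derivation}; $\tr$ preserves it by \eqref{eq: post-lie product derivation} since the derivation slot is unchanged; and $\td$ preserves it by hypothesis $\td:\D\otimes\D\to\D$ together with \eqref{eq: extension connections on derivations}. Next, for the torsion/curvature conditions, I would invoke the transport-of-structure computation already displayed in Subsection~\ref{subsec: pL alg of derivations}: the lifted torsion satisfies $\tors{\td,[\cdot,\cdot]}(a_1\otimes D_1,a_2\otimes D_2)=a_1a_2\otimes\tors{\td,[\cdot,\cdot]_\circ}(D_1,D_2)$ and similarly $\curv{\td,[\cdot,\cdot]}(a_1\otimes D_1,a_2\otimes D_2,a_3\otimes D_3)=a_1a_2a_3\otimes\curv{\td,[\cdot,\cdot]_\circ}(D_1,D_2,D_3)$, and the covariant derivative of the torsion on $\A\otimes\D$ reduces in the same fashion to the covariant derivative on $\D$. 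Hence the equivalences
\[
\curv{\td,[\cdot,\cdot]}\equiv 0\ \Longleftrightarrow\ \curv{\td,[\cdot,\cdot]_\circ}\equiv 0,
\qquad
\td\tors{\td,[\cdot,\cdot]}\equiv 0\ \Longleftrightarrow\ \td\tors{\td,[\cdot,\cdot]_\circ}\equiv 0,
\]
already established there, show that the two hypotheses $\curv{\td,[\cdot,\cdot]_\circ}\equiv 0$ and $\td\tors{\td,[\cdot,\cdot]_\circ}\equiv 0$ of the present theorem are exactly the $\A\otimes\D$-level conditions needed in Theorem~\ref{th: deformation post-Lie}.

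It remains to check the compatibility \eqref{eq: compatibility derivation}, and this is the step I expect to carry the only real (though still light) computational content; it is also the point where one uses the Leibniz rule for derivations. Fix $x=a\otimes D\in\A\otimes\D$ and elements $a_1\otimes D_1,\ a_2\otimes D_2$. On one side,
\[
x\tr\big((a_1\otimes D_1)\td(a_2\otimes D_2)\big)
= (a\otimes D)\tr\big(a_1a_2\otimes(D_1\td D_2)\big)
= a\,D(a_1a_2)\otimes(D_1\td D_2).
\]
On the other side, using the Leibniz rule $D(a_1a_2)=D(a_1)a_2+a_1D(a_2)$,
\[
\big((x\tr(a_1\otimes D_1))\td(a_2\otimes D_2)\big)+\big((a_1\otimes D_1)\td(x\tr(a_2\otimes D_2))\big)
= a\,D(a_1)\,a_2\otimes(D_1\td D_2)+a_1\,a\,D(a_2)\otimes(D_1\td D_2),
\]
and commutativity of $\A$ makes the two sides coincide, so $x\tr\cdot\in\Der(\A\otimes\D,\td)$. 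This is precisely the observation made just before Corollary~\ref{coro: pre-Lie deformation for post-Lie alg of der.}. With all three hypotheses of Theorem~\ref{th: deformation post-Lie} verified, that theorem yields that $(\A\otimes\D,\btr,\lb\cdot,\cdot\rb)$ with $\btr=\tr+\td$ and $\lb\cdot,\cdot\rb=[\cdot,\cdot]-[\cdot,\cdot]_{\diamond}$ is a post-Lie algebra; substituting the explicit formulas \eqref{eq: post-lie product derivation}, \eqref{eq: extension connections on derivations} and \eqref{eq: post-lie bracket derivation} gives exactly \eqref{eq: post-lie product geometric derivation alg} and \eqref{eq: post-lie bracket geometric derivation alg}, completing the proof. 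The main obstacle is essentially bookkeeping — keeping the lift \eqref{eq: extension connections on derivations} and the scalar factors straight — since all the structural work has already been done in Theorem~\ref{th: deformation post-Lie} and the transport-of-structure display.
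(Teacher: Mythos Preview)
Your proof is correct and follows the same approach as the paper: reduce to Theorem~\ref{th: deformation post-Lie} via the transport-of-structure equivalences for torsion and curvature, and verify the derivation compatibility \eqref{eq: compatibility derivation} by the Leibniz rule on $\A$. The paper's proof is slightly terser (it leaves the stability and the torsion/curvature transfer implicit from the preceding display and only writes out the compatibility computation), but the content is identical.
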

\begin{proof} This is a simple application of the deformation's Theorem \ref{th: deformation post-Lie}, for which the compatibility condition \eqref{eq: compatibility derivation} given here as \[x\tr\cdot \in \Der(\A\otimes \D,\td)\qquad \forall x\in \A\otimes \D,\]
is easily satisfied in our context. Indeed for all $a_1,a_2,a_3\in\A$ and all $D_1,D_2,D_3\in \Der(\A)$, we have by associativity and commutativity of $(\A,\cdot)$ the following equalities:
\begin{align*}
    &(a_1\otimes D_1) \tr \Big((a_2\otimes D_2)\td(a_3\otimes D_3)\Big)\\
    &=(a_1\otimes D_1) \tr (a_2a_3\otimes D_2\td D_3)\\
    &=a_1 D_1(a_2) a_3 \otimes D_2\td D_3 + a_1a_2D_1(a_3)\otimes D_2\td D_3\\
    &=\Big((a_1\otimes D_1) \tr (a_2\otimes D_2)\Big) \td (a_3\otimes D_3) + (a_2\otimes D_2)\td \Big((a_1\otimes D_1) \tr (a_3\otimes D_3)\Big)
\end{align*}
\end{proof}

\begin{remark}
    The structure $(\A\otimes \D,\btr,\lb\cdot,\cdot\rb)$ defined above is a generalization of the post-Lie algebra $(\A\otimes \D,\tr,[\cdot,\cdot])$ given in Theorem \ref{theo: post-Lie structure from derivations} from \cite{jacques2023post}, in the sense that the two coincide when $\td\equiv 0$.
\end{remark}

In the particular case if $\D\subset Der(\A)$ is a subspace of commuting derivations for the composition product, that is to say if 
\[D_1\circ D_2=D_2\circ D_1,\qquad ~ D_1,D_2\in \D,\]
then the Lie bracket $[\cdot,\cdot]$ on $\A\otimes \D$ is null and we obtain the following corollary:

\begin{corollary}\label{coro: post-Lie commuting derivations}
    Let us take the same hypothesis as in Theorem \ref{th: post-Lie conditions on A tens Der(A)}. If $\D\subset Der(\A)$ is a subspace of commuting derivations, then  $(\A\otimes \D,\btr,-\tors{\td})$ is a post-Lie algebra, and if moreover $\td \equiv 0$, then $(\A\otimes \D,\btr=\tr)$ is a pre-Lie algebra.
\end{corollary}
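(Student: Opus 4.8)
The plan is to deduce Corollary \ref{coro: post-Lie commuting derivations} directly from Theorem \ref{th: post-Lie conditions on A tens Der(A)} by observing that the hypothesis of commuting derivations forces the relevant brackets and compatibility conditions to collapse. First I would note that if $D_1\circ D_2=D_2\circ D_1$ for all $D_1,D_2\in\D$, then $[D_1,D_2]_\circ=0$, so by the transport of structure displayed just before the corollary, the Lie bracket $[\cdot,\cdot]$ on $\A\otimes\D$ vanishes identically. Consequently the torsion of $\td$ on $(\A\otimes\D,[\cdot,\cdot])$ reduces to
\[
\tors{\td,[\cdot,\cdot]}(a_1\otimes D_1,a_2\otimes D_2)=a_1a_2\otimes[D_1,D_2]_{\diamond}=-[\cdot,\cdot]_{\diamond}\text{-component},
\]
i.e. $\tors{\td,[\cdot,\cdot]}=-[\cdot,\cdot]_{\diamond}$, which is what the notation $\tors{\td}$ in the statement abbreviates. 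So the deformed bracket $\lb\cdot,\cdot\rb=[\cdot,\cdot]-[\cdot,\cdot]_{\diamond}$ becomes simply $-[\cdot,\cdot]_{\diamond}=-\tors{\td}$, matching the claimed form.

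Next I would verify that the hypotheses of Theorem \ref{th: post-Lie conditions on A tens Der(A)}, namely $\curv{\td,[\cdot,\cdot]_\circ}\equiv0$ and $\td\tors{\td,[\cdot,\cdot]_\circ}\equiv0$, still need to be assumed — and indeed the statement of the corollary says ``Let us take the same hypothesis as in Theorem \ref{th: post-Lie conditions on A tens Der(A)}'', so these are granted. Then applying that theorem verbatim gives that $(\A\otimes\D,\btr,\lb\cdot,\cdot\rb)$ is a post-Lie algebra, and by the identification above this reads $(\A\otimes\D,\btr,-\tors{\td})$, proving the first assertion.

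For the second assertion, I would specialise further to $\td\equiv0$. Then $\btr=\tr+\td=\tr$, and the torsion $\tors{\td}=-[\cdot,\cdot]_{\diamond}$ with $[\cdot,\cdot]_{\diamond}$ now being the commutator of the zero operation, hence $[\cdot,\cdot]_{\diamond}\equiv0$; so $\lb\cdot,\cdot\rb\equiv0$ as well. A post-Lie algebra whose Lie bracket component vanishes is exactly a pre-Lie algebra (this is recorded right after Definition \ref{def: post-Lie}), so $(\A\otimes\D,\tr)$ is a pre-Lie algebra. Alternatively, and perhaps cleaner to cite, one can invoke Corollary \ref{coro: pre-Lie deformation for post-Lie alg of der.} with $\td$ being the zero pre-Lie product, for which trivially $[\cdot,\cdot]_{\diamond}=0=[\cdot,\cdot]_\circ$ on the commuting subspace $\D$.

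I do not anticipate a genuine obstacle here: the corollary is a pure specialisation, and the only thing requiring a moment of care is bookkeeping of the various notations — making sure that $\tors{\td}$ in the statement is understood as $\tors{\td,[\cdot,\cdot]}$ evaluated when $[\cdot,\cdot]\equiv0$, i.e. the negative of the $[\cdot,\cdot]_{\diamond}$-component, and that ``flat plus constant torsion'' hypotheses are inherited unchanged. The argument is therefore essentially a two-line invocation of Theorem \ref{th: post-Lie conditions on A tens Der(A)} together with the remark that post-Lie with trivial bracket means pre-Lie.
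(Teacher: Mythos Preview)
Your proposal is correct and follows exactly the paper's reasoning: the paper does not give a formal proof environment for this corollary but simply observes, in the sentence preceding it, that commuting derivations force $[\cdot,\cdot]\equiv0$ on $\A\otimes\D$, after which the statement is an immediate specialisation of Theorem \ref{th: post-Lie conditions on A tens Der(A)} together with the ``post-Lie with trivial bracket $=$ pre-Lie'' remark. One small slip to fix: in your displayed computation you write $\tors{\td,[\cdot,\cdot]}=-[\cdot,\cdot]_{\diamond}$, but by the definition $\tors{\td,[\cdot,\cdot]}=[\cdot,\cdot]_{\diamond}-[\cdot,\cdot]$ the correct sign is $\tors{\td,[\cdot,\cdot]}=+[\cdot,\cdot]_{\diamond}$ when $[\cdot,\cdot]\equiv0$; your subsequent line $\lb\cdot,\cdot\rb=-[\cdot,\cdot]_{\diamond}=-\tors{\td}$ is nonetheless correct, so the error is purely typographical and does not affect the argument.
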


\subsection{Representation of the enveloping algebras}\label{subsection: Representation of the enveloping algebras}
Throughout this subsection, we consider $(L,\tr,[\cdot,\cdot])$ as being a sub post-Lie algebra of the canonical post-Lie algebra on $\A\otimes \Der(\A)$ defined in Theorem \ref{theo: post-Lie structure from derivations} and we consider a deformed post-Lie structure $(L,\btr,\lb\cdot,\cdot\rb)$ as given in Theorem \ref{th: post-Lie conditions on A tens Der(A)}.

In \cite{jacques2023post} algebra representations of $(\env_{[\cdot,\cdot]}(L),\star_{\btr})$ and $(\env_{\llbracket \cdot,\cdot \rrbracket}(L),\mathsf{conc})$ on $\A$, that is to say an algebra morphism with values in the space of endomorphisms $\End(\A)$ endowed with the composition product $\circ$, have been given. We are aiming at defining a representation of $(\env_{\lb\cdot,\cdot\rb}(L),\star_{\btr})$ on $\A$:

Consider the linear map $\rho:\A\otimes \Der(\A)\to \Der(\A)$ given by 
\begin{equation}\label{eq:rho}
\rho(a\otimes D)=a\cdot D,
\end{equation} 
where $a\cdot D$ denotes the element of $\End(\A)$ defined by:
\begin{equation}\label{eq:aD}
a\cdot D:\A\to\A, \qquad a\cdot D(b):=aD(b). 
\end{equation}
We have seen that $(\Der(\A),[\cdot,\cdot]_\circ)$ is a sub-Lie algebra of $(\End(\A),[\cdot,\cdot]_\circ)$, and that $(L,\llbracket\cdot,\cdot\rrbracket)$ is a
Lie algebra since $L\subseteq \A\otimes\Der(\A)$ is post-Lie. The relation between these two Lie algebras is explained by the following:
\begin{lemma}[\cite{jacques2023post} Lemma 3.9.]
The map $\rho:(L,\llbracket\cdot,\cdot\rrbracket)\to (\Der(\A),[\cdot,\cdot]_\circ)$ is a morphism of Lie algebras.
\end{lemma}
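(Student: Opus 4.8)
The statement to prove is that $\rho:(L,\llbracket\cdot,\cdot\rrbracket)\to(\Der(\A),[\cdot,\cdot]_\circ)$ is a morphism of Lie algebras, where $L\subseteq\A\otimes\Der(\A)$ is post-Lie and $\rho(a\otimes D)=a\cdot D$. The plan is to verify directly that $\rho$ intertwines $\llbracket\cdot,\cdot\rrbracket$ with $[\cdot,\cdot]_\circ$, by computing both sides on a pair of generators $a_1\otimes D_1$ and $a_2\otimes D_2$ and checking that they agree as elements of $\End(\A)$; since $\rho$ is linear and both brackets are bilinear, it suffices to check the identity on such simple tensors.

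\textbf{Key steps.} First I would recall that $\llbracket\cdot,\cdot\rrbracket=[\cdot,\cdot]_\tr+[\cdot,\cdot]$ with $\tr$ and $[\cdot,\cdot]$ given by \eqref{eq: post-lie product derivation} and \eqref{eq: post-lie bracket derivation}; an easy computation gives
\[
\llbracket a_1\otimes D_1,\,a_2\otimes D_2\rrbracket = a_1 D_1(a_2)\otimes D_2 - a_2 D_2(a_1)\otimes D_1 + a_1 a_2\otimes [D_1,D_2]_\circ.
\]
Applying $\rho$ to the right-hand side yields, in $\End(\A)$, the operator $a_1 D_1(a_2)\cdot D_2 - a_2 D_2(a_1)\cdot D_1 + a_1 a_2\cdot[D_1,D_2]_\circ$. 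Next I would compute the other side: $[\rho(a_1\otimes D_1),\rho(a_2\otimes D_2)]_\circ = (a_1\cdot D_1)\circ(a_2\cdot D_2) - (a_2\cdot D_2)\circ(a_1\cdot D_1)$, and expand each composition using the Leibniz rule. Evaluating $(a_1\cdot D_1)\circ(a_2\cdot D_2)$ on $b\in\A$ gives $a_1 D_1(a_2 D_2(b)) = a_1 D_1(a_2) D_2(b) + a_1 a_2 D_1(D_2(b))$, and symmetrically for the other term. Subtracting, the terms $a_1 a_2 D_1(D_2(b)) - a_2 a_1 D_2(D_1(b)) = a_1 a_2\,[D_1,D_2]_\circ(b)$ (using commutativity of $\A$), while the remaining terms give $a_1 D_1(a_2) D_2(b) - a_2 D_2(a_1) D_1(b)$. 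Comparing with the expansion of $\rho(\llbracket\cdot,\cdot\rrbracket)$ evaluated on $b$, the two expressions coincide term by term, which establishes the morphism property.

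\textbf{Main obstacle.} There is no real obstacle here: the proof is a routine bookkeeping exercise in applying the Leibniz rule and using commutativity of $\A$. The only point requiring mild care is keeping track of which scalar factor multiplies which derivation in the cross terms $a_1 D_1(a_2)\otimes D_2$ versus $a_2 D_2(a_1)\otimes D_1$, and making sure the "diagonal" second-order terms assemble precisely into $a_1 a_2\otimes[D_1,D_2]_\circ$ rather than leaving a spurious leftover. Since this is essentially the computation already implicit in the proof that $(L,\llbracket\cdot,\cdot\rrbracket)$ is a Lie algebra inside $\End(\A)$, it can be presented very compactly.
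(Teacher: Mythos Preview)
Your proposal is correct and is exactly the natural direct computation one expects; the paper itself does not give a proof of this lemma but simply cites it from \cite{jacques2023post}, and your argument is the standard verification that would appear there. There is nothing to add.
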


In \cite{jacques2023post} algebra representations of $(\env_{[\cdot,\cdot]}(L),\star_{\btr})$ and $(\env_{\llbracket \cdot,\cdot \rrbracket}(L),\mathsf{conc})$ on $\A$, that is to say an algebra morphism with values in the space of endomorphisms $\End(\A)$ endowed with the composition product $\circ$, have been given. We recall below the two representation morphisms $\rhohat$ and $\rhobar$:

\begin{theorem}\label{theo: representation} The following linear maps $\rhohat$ and $\rhobar$ are morphism of algebras:
\begin{equation}\label{eq:rhohat}
\writefun{\rhohat}{ \left(\env_{\llbracket \cdot,\cdot \rrbracket}(L),\mathsf{conc}\right)}{\left(\End(\A),\circ\right)}{(a_1\otimes D_1)\cdots(a_n\otimes D_n)}{(a_1\cdot D_1)\circ\cdots\circ(a_n\cdot D_n)}
\end{equation}
\begin{equation}\label{eq:rhobar}
\writefun{\rhobar_{\tr}}{\left(\env_{[\cdot,\cdot]}(L),\star_{\tr}\right)}{\left(\End(\A),\circ\right)}{(a_1\otimes D_1)\cdots(a_n\otimes D_n)}{a_1\cdots a_n\cdot (D_1\circ \ldots \circ D_n)}
\end{equation}
\end{theorem}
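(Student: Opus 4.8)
The plan is to handle the two maps separately: for $\rhohat$ I would invoke the universal property of an enveloping algebra, and for $\rhobar_\tr$ I would reduce everything to one Leibniz-type identity, after which the two statements are tied together by the Hopf isomorphism $\Phi_\tr$ of Theorem \ref{theo: isomorphism between enveloping algebras}.

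\textbf{The map $\rhohat$.} By construction $\big(\env_{\llbracket\cdot,\cdot\rrbracket}(L),\conc\big)$ is the universal enveloping algebra of the Lie algebra $(L,\llbracket\cdot,\cdot\rrbracket)$. The linear map $\rho\colon L\to\End(\A)$, $a\otimes D\mapsto a\cdot D$, is a morphism of Lie algebras from $(L,\llbracket\cdot,\cdot\rrbracket)$ into $(\Der(\A),[\cdot,\cdot]_\circ)\subseteq(\End(\A),[\cdot,\cdot]_\circ)$ — this is the Lemma stated just above, taken from \cite{jacques2023post}. Hence the universal property produces a unique morphism of associative algebras $\env_{\llbracket\cdot,\cdot\rrbracket}(L)\to(\End(\A),\circ)$ extending $\rho$; evaluated on $(a_1\otimes D_1)\cdots(a_n\otimes D_n)$ it returns $(a_1\cdot D_1)\circ\cdots\circ(a_n\cdot D_n)$, so it is exactly $\rhohat$, which is therefore a well-defined algebra morphism.

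\textbf{Well-definedness of $\rhobar_\tr$.} I would first extend the formula $(a_1\otimes D_1)\cdots(a_n\otimes D_n)\mapsto a_1\cdots a_n\cdot(D_1\circ\cdots\circ D_n)$ multilinearly to $\tens(L)$ and check that it annihilates the defining two-sided ideal of $\env_{[\cdot,\cdot]}(L)$. It suffices to evaluate it on the spanning elements $x_1\cdots x_{i-1}\big(x_i x_{i+1}-x_{i+1}x_i-[x_i,x_{i+1}]\big)x_{i+2}\cdots x_n$ with $x_j=a_j\otimes D_j$; using commutativity of $\A$ and $[x_i,x_{i+1}]=a_ia_{i+1}\otimes[D_i,D_{i+1}]_\circ$ (Theorem \ref{theo: post-Lie structure from derivations}), both the swap $x_i x_{i+1}-x_{i+1}x_i$ and the bracket term evaluate to $a_1\cdots a_n\cdot\big(D_1\circ\cdots\circ[D_i,D_{i+1}]_\circ\circ\cdots\circ D_n\big)$, so their difference is killed and $\rhobar_\tr$ descends to $\env_{[\cdot,\cdot]}(L)$.

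\textbf{Multiplicativity of $\rhobar_\tr$, and the main obstacle.} The heart of the argument is the identity
\[\rhobar_\tr(x\star_\tr u)=\rho(x)\circ\rhobar_\tr(u),\qquad x\in L,\ u\in\env_{[\cdot,\cdot]}(L),\]
which, being linear in $u$, I would check on $u=x_1\cdots x_n$ with $x_j=a_j\otimes D_j$ and $x=a_0\otimes D_0$. Expanding the left side with \eqref{eq:atrbbb} and \eqref{eq: post-lie product derivation}, collecting the scalar factors and using the iterated Leibniz rule $\sum_i D_0(a_i)\prod_{j\neq i}a_j=D_0(a_1\cdots a_n)$ gives $a_0 D_0(a_1\cdots a_n)\cdot(D_1\circ\cdots\circ D_n)+a_0a_1\cdots a_n\cdot(D_0\circ D_1\circ\cdots\circ D_n)$; on the right side, a single application of the Leibniz rule to $D_0\big(a_1\cdots a_n\cdot E(b)\big)$, with $E=D_1\circ\cdots\circ D_n$, yields the very same expression. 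Iterating this identity gives $\rhobar_\tr(x_1\star_\tr\cdots\star_\tr x_k)=\rho(x_1)\circ\cdots\circ\rho(x_k)$ (the empty product producing the identity endomorphism of $\A$), and hence $\rhobar_\tr(v\star_\tr u)=\rhobar_\tr(v)\circ\rhobar_\tr(u)$ whenever $v=x_1\star_\tr\cdots\star_\tr x_k$. Since \eqref{eq:atrbbb} together with associativity of $\star_\tr$ shows, by a straightforward induction on length, that such $\star_\tr$-products linearly span $\env_{[\cdot,\cdot]}(L)$, the map $\rhobar_\tr$ is a morphism of associative algebras. The only step with genuine content is this boxed identity: it holds precisely because the Leibniz rule intervenes twice — once inside the Guin--Oudom product through \eqref{eq:atrbbb}, once when composing the first-order operator $a_0\cdot D_0$ with an ``operator times function'' — and the two contributions match. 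Equivalently, the three steps together say $\rhohat=\rhobar_\tr\circ\Phi_\tr$, so $\rhobar_\tr=\rhohat\circ\Phi_\tr^{-1}$ is a composite of algebra morphisms via the isomorphism of Theorem \ref{theo: isomorphism between enveloping algebras}, which is perhaps the most transparent way to phrase the conclusion.
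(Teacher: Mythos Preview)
Your proposal is correct and follows essentially the same approach as the paper. The paper itself only recalls this theorem from \cite{jacques2023post} with the brief remark on the universal property for $\rhohat$, but the proof of the deformed analogue (Theorem \ref{theo: deformed representation}) displays exactly the same inductive computation via \eqref{eq:atrbbb} and the Leibniz rule that you carry out; your closing observation $\rhobar_\tr=\rhohat\circ\Phi_\tr^{-1}$ is precisely how the paper organises the deformed case (defining $\rhobar_{\btr}$ as a composite and then deriving the explicit formula), just run in the opposite order.
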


where the universal property of $\env_{\llbracket \cdot,\cdot \rrbracket}(L)$, 
the map $\rhohat$ is the unique extension of $\rho$, into a morphism of associative algebras.

We also define by composition a representation morphism \[\rhobar_{\btr}:\left(\env_{\lb\cdot,\cdot\rb}(L),\star_{\btr}\right)\to\End(\A),\] by setting:
\begin{equation*}
\rhobar_{\btr}:=\rhobar_{\tr}\circ \Phi_{\tr} \circ \Phi_{\btr}^{-1}.
\end{equation*}

To summarise, let us complete the commutative diagramm \eqref{diag: envl alg} into a commutative diagram of associative algebras, by adding the representation maps:
\[
\begin{tikzcd}[scale cd=1,sep=large]
    & \lp \env_{\llbracket \cdot,\cdot \rrbracket}(L),\mathsf{conc}\ \rp \arrow[dd,dashed,"\rhohat", pos=0.3] \arrow["\sim"']{dr}{\Phi_{\btr}} \\
    \left(\env_{[\cdot,\cdot]}(L),\star_{\tr}\right) \arrow["\sim"']{ur}{\Phi_{\tr}^{-1}}\arrow[rr,"\sim", pos=0.4, crossing over] \arrow[dashed,swap]{dr}{\rhobar_{\tr}} && \left(\env_{\lb\cdot,\cdot\rb}(L),\star_{\btr}\right) \arrow[dashed]{dl}{\rhobar_{\btr}} \\
     & (\End(\A),\circ)
  \end{tikzcd}
\]

\begin{remark}\label{rem: equality rho}
    We remark that for all basis element $a\otimes D \in \B_L$:
    \begin{equation}
        \rho(a\otimes D)=\rhobar_{\tr}(a\otimes D)=\rhobar_{\btr}(a\otimes D).
    \end{equation}
\end{remark}

We are now aiming at characterising the morphism $\rhobar_{\btr}$ using a map \[\Psi_{\diamond}:\tens(\Der(\A))\to\End(\A)\] defined for all $D_0,D_1,\ldots, D_n\in \Der(\A)$, by induction on the length of the words as:
\begin{equation}\label{eq: Psi_diamond}
\begin{aligned}
    \Psi_{\diamond}[D_0 D_1\cdots D_n]:=&D_0\circ \Psi_{\diamond}[D_1\cdots D_n]\\
    &- \sum_{i=1}^n \Psi_{\diamond}[D_1\cdots(D_0\td D_i)\cdots D_n]
\end{aligned}
\end{equation}

We remark that if $\td\equiv0$, we obtain the iterated composition of endomorphisms:
\[\Psi_{\0}[D_1\cdots D_n]=D_1\circ \ldots \circ D_n\]
which implies that the following Theorem is a generalization of \cite{jacques2023post}[Theorem 3.11.]:
\begin{theorem}\label{theo: deformed representation} The linear map $\rhobar_{\btr}$
admits the following explicit expression:
\begin{equation}\label{eq: representation deformed algebra derivation}
    \rhobar_{\btr}\Big((a_1\otimes D_1)\cdots (a_n\otimes D_n)\Big)= a_1\cdots a_n\cdot \Psi_{\diamond}[D_1\cdots D_n].
\end{equation}
By the algebra morphism property for $\rhobar_{\btr}:\left(\env_{[\cdot,\cdot]}(L),\star_{\btr}\right)\to(\End(\A),\circ)$, we also have
\begin{equation}\label{eq: representation algebra derivation2}
\rhobar_{\btr}\Big((a_1\otimes D_1)\star_{\btr} \cdots\, \star_{\btr}\,(a_n\otimes D_n) \Big)=(a_1\cdot D_1)\circ\cdots\circ(a_n\cdot D_n).
\end{equation}
\end{theorem}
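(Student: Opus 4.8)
The plan is to prove the explicit formula \eqref{eq: representation deformed algebra derivation} by induction on the word length $n$, and then to read off \eqref{eq: representation algebra derivation2} from the algebra-morphism property of $\rhobar_{\btr}$. Before starting I would record two facts that are immediate from the construction: first, that $\rhobar_{\btr}=\rhobar_{\tr}\circ\Phi_{\tr}\circ\Phi_{\btr}^{-1}$ is a morphism of associative algebras $(\env_{\lb\cdot,\cdot\rb}(L),\star_{\btr})\to(\End(\A),\circ)$, being the composite of the algebra morphism $\rhobar_{\tr}$ of Theorem \ref{theo: representation} with the Hopf-algebra isomorphisms $\Phi_{\tr}$ and $\Phi_{\btr}^{-1}$; second, that on a generator $a\otimes D\in L$ one has $\rhobar_{\btr}(a\otimes D)=\rho(a\otimes D)=a\cdot D$ by Remark \ref{rem: equality rho}. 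Granting \eqref{eq: representation deformed algebra derivation}, formula \eqref{eq: representation algebra derivation2} is then immediate, since $\rhobar_{\btr}\big((a_1\otimes D_1)\star_{\btr}\cdots\star_{\btr}(a_n\otimes D_n)\big)=\rhobar_{\btr}(a_1\otimes D_1)\circ\cdots\circ\rhobar_{\btr}(a_n\otimes D_n)=(a_1\cdot D_1)\circ\cdots\circ(a_n\cdot D_n)$; and since the concatenation words span $\env_{\lb\cdot,\cdot\rb}(L)$, formula \eqref{eq: representation deformed algebra derivation} indeed pins down $\rhobar_{\btr}$ entirely.

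For the inductive step I would write $x_i:=a_i\otimes D_i$ and use the two Guin--Oudom identities of Propositions \ref{prop: extension post-lie product} and \ref{prop: associative product}, applied to the deformed post-Lie algebra $(L,\btr,\lb\cdot,\cdot\rb)$: for $x_0\in L$ and the length-$n$ concatenation word $x_1\cdots x_n\in\env_{\lb\cdot,\cdot\rb}(L)$,
\[x_0 x_1\cdots x_n = x_0\star_{\btr}(x_1\cdots x_n)-x_0\btr(x_1\cdots x_n),\qquad x_0\btr(x_1\cdots x_n)=\sum_{i=1}^n x_1\cdots(x_0\btr x_i)\cdots x_n,\]
together with $x_0\btr x_i = a_0 D_0(a_i)\otimes D_i + a_0 a_i\otimes(D_0\td D_i)$ from \eqref{eq: post-lie product geometric derivation alg}. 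Now apply $\rhobar_{\btr}$. On $x_0\star_{\btr}(x_1\cdots x_n)$ I use the morphism property and the induction hypothesis to obtain $(a_0\cdot D_0)\circ\big(a_1\cdots a_n\cdot\Psi_{\diamond}[D_1\cdots D_n]\big)$, which I expand with the elementary identity $(a_0\cdot D_0)\circ(b\cdot E)=a_0 D_0(b)\cdot E + a_0 b\cdot(D_0\circ E)$ in $\End(\A)$ and the Leibniz rule $D_0(a_1\cdots a_n)=\sum_i a_1\cdots D_0(a_i)\cdots a_n$. On $x_0\btr(x_1\cdots x_n)$ I use linearity and the induction hypothesis slot by slot; the contributions coming from the summand $a_0 D_0(a_i)\otimes D_i$ of $x_0\btr x_i$ reproduce exactly the Leibniz terms just mentioned, but with the opposite sign, so they cancel, while the contributions from the summand $a_0 a_i\otimes(D_0\td D_i)$ yield $-\sum_{i}a_0 a_1\cdots a_n\cdot\Psi_{\diamond}[D_1\cdots(D_0\td D_i)\cdots D_n]$. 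What remains is
\[\rhobar_{\btr}(x_0 x_1\cdots x_n)=a_0 a_1\cdots a_n\cdot\Big(D_0\circ\Psi_{\diamond}[D_1\cdots D_n]-\sum_{i=1}^n\Psi_{\diamond}[D_1\cdots(D_0\td D_i)\cdots D_n]\Big)=a_0 a_1\cdots a_n\cdot\Psi_{\diamond}[D_0 D_1\cdots D_n]\]
by the defining recursion \eqref{eq: Psi_diamond} for $\Psi_{\diamond}$; the base case $n=1$ is the equality $\rhobar_{\btr}(a_1\otimes D_1)=a_1\cdot D_1=a_1\cdot\Psi_{\diamond}[D_1]$ already recorded.

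I do not expect any genuine difficulty here: the statement is essentially a transport of the explicit description of $\rhobar_{\tr}$ through the isomorphism $\Phi_{\tr}\circ\Phi_{\btr}^{-1}$, and the recursion \eqref{eq: Psi_diamond} was precisely designed to match the combinatorics produced by the Guin--Oudom formulas for $\btr=\tr+\td$. The only point requiring care is the bookkeeping in the inductive step — keeping track of the two families of terms generated by $x_0\btr x_i$ (the $\tr$-part, which cancels against the Leibniz terms, and the $\td$-part, which feeds the recursion of $\Psi_{\diamond}$) and checking that nothing else survives. Everything else — the algebra-morphism property of $\rhobar_{\btr}$, the two Guin--Oudom identities, the Leibniz rule, and the identity for $(a_0\cdot D_0)\circ(b\cdot E)$ — can be invoked directly.
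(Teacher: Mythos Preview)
Your proposal is correct and follows essentially the same approach as the paper: induction on the word length, using the Guin--Oudom identity $x_0\star_{\btr}(x_1\cdots x_n)=x_0 x_1\cdots x_n+\sum_i x_1\cdots(x_0\btr x_i)\cdots x_n$ together with the morphism property and the splitting $x_0\btr x_i=a_0 D_0(a_i)\otimes D_i+a_0 a_i\otimes(D_0\td D_i)$, so that the $\tr$-contributions cancel the Leibniz terms and the surviving $\td$-contributions reproduce the recursion \eqref{eq: Psi_diamond}. The only cosmetic difference is that the paper computes $\rhobar_{\btr}(x_0\star_{\btr}(x_1\cdots x_n))$ two ways and equates, whereas you rearrange first and compute $\rhobar_{\btr}(x_0 x_1\cdots x_n)$ directly.
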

\begin{proof} This proof is an adaptation of the proof of \cite{jacques2023post}[Theorem 3.11.].
To prove \eqref{eq: representation deformed algebra derivation}, we proceed by induction on $n$: for
$n=1$ the claim follows from the definition \eqref{eq:rho} of $\rho$. Let us suppose now that  \eqref{eq: representation deformed algebra derivation} is proved for $n\geq 1$; let us set for ease of notation $x_i:=a_i\otimes D_i\in L$, for all $i=0,\ldots,n$; then 
by \eqref{eq:atrbbb}:
\[
x_0\star_{\btr} (x_1\cdots x_n) = x_0 \cdots x_n + \sum_{i=1}^n x_1 \cdots (x_0\btr x_i) \cdots x_n.
\]
By the definition of $\btr$ we have $x_0\btr x_i=a_0D_0(a_i)\otimes D_i+a_0a_iD_0\td D_i$. 
Applying $\rhobar_{\btr}$ to both sides of the above equality, and using linearity, we obtain by the induction hypothesis:
\begin{align*}
    \rhobar_{\btr}\Big(x_0\star_{\btr} (x_1\cdots x_n)\Big)&=\rhobar_{\btr}(x_0\cdots x_n)\\
    &+\sum_{i=1}^n a_0\cdots D_0(a_i) \cdots a_n \cdot \Psi_{\diamond}[D_1\cdots D_n] \\
    &+ \sum_{i=1}^n a_0\cdots a_n \cdot \Psi_{\diamond}[D_1\cdots(D_0\td D_i)\cdots D_n]
\end{align*}

On the other hand, by the morphism property and the induction hypothesis:
\[
\begin{split}
&\rhobar_{\btr}\Big(x_0\star_{\btr} (x_1\cdots x_n)\Big) =\rhobar_{\btr}(x_0)\circ \rhobar_{\btr}(x_1\cdots x_n)
\\ & = (a_0 \cdot D_0)\circ \left(a_1\cdots a_n\cdot\Psi_{\diamond}[D_1\cdots D_n]\right)
\\ & = \sum_{i=1}^n a_0\cdots D_0(a_i) \cdots a_n \cdot\Psi_{\diamond}[D_1\cdots D_n]+a_0\cdots a_n\cdot(D_0\circ \Psi_{\diamond}[D_1\cdots D_n])
\end{split}
\]
Therefore we obtain as required by definition of $\Psi_{\diamond}$:
\begin{align*}
    &\rhobar_{\btr}(x_0 \cdots x_n)=\rhobar_{\btr}\Big((a_0\otimes D_0)\cdots (a_n\otimes D_n)\Big)\\
    &=a_0\cdots a_n \cdot\left(D_0\circ \Psi_{\diamond}[D_1\cdots D_n]-\sum_{i=1}^n \Psi_{\diamond}[D_1\cdots (D_0\td D_i)\cdots D_n]\right)\\
    &=a_0\cdots a_n\cdot\Psi_{\diamond}[D_0 D_1\cdots D_n]
\end{align*}
and the proof is complete.
\end{proof}
In particular, we remark that by the Definition of $\td$ on $L$ \eqref{eq: extension connections on derivations} and by commutativity of $(\A,\cdot)$ we have:
\begin{multline*}
    \rhobar_{\btr}\Big((a_1\otimes D_1)\cdots((a_0\otimes D_0)\td (a_i\otimes D_i))\cdots (a_n\otimes D_n)\Big)\\
    = a_0 \cdots a_n\cdot \Psi_{\diamond}[D_1 \cdots (D_0\td D_i)\ldots D_n].
\end{multline*}
Therefore, we have that:
\begin{multline}\label{eq: recurrence deformation rhobar}
        \rhobar_{\btr}(u)(b)=a_1\cdots a_n\cdot \rho(a_0\otimes D_0)\Big(\Psi_\diamond[D_1\cdots D_n](b)\Big)\\
        -\sum_{i=1}^n \rhobar_{\btr}\Big((a_1\otimes D_1)\cdots((a_0\otimes D_0)\td (a_i\otimes D_i))\cdots (a_n\otimes D_n)\Big)
    \end{multline}

\begin{prop}\label{prop: multiplicativity property Psi}
    Given $b_1,b_2\in \A$ and $U=D_1\cdots D_n\in \tens(\Der(\A))$, using Sweedler's notation \eqref{eq: Sweedler}, we have the following equalities in $\A$:
    \begin{equation}
        \begin{aligned}
        \Psi_{\diamond}[U](b_1 b_2)&=m_\A(\Psi_{\diamond}\otimes \Psi_{\diamond})(\Delta_\ast U)(b_1\otimes b_2)\\
        &=\sum_{\Delta_\ast(U)} \Psi_{\diamond}\left[U^{(1)}\right](b_1)\cdot\Psi_{\diamond}\left[U^{(2)}\right](b_2)\\
        &=\sum_{I\sqcup J=\{1,\ldots,n\}} \Psi_{\diamond}\left[\prod_{i\in I}D_i\right](b_1)\cdot\Psi_{\diamond}\left[\prod_{j\in J}D_j\right](b_2)
    \end{aligned}
    \end{equation}
    \end{prop}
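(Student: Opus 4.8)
\emph{Proof strategy.} The plan is to prove the first identity by induction on the length $n$ of the word $U=D_1\cdots D_n$, in the spirit of the proof of Theorem~\ref{theo: deformed representation}; the two remaining equalities are then merely the translation of Sweedler's notation, $\Delta_\ast U=\sum_{\Delta_\ast(U)}U^{(1)}\otimes U^{(2)}=\sum_{I\sqcup J=\{1,\ldots,n\}}\big(\prod_{i\in I}D_i\big)\otimes\big(\prod_{j\in J}D_j\big)$, together with the fact that $m_\A$ is the multiplication of $(\A,\cdot)$. For $n=0$ one has $U=\1$, $\Psi_{\diamond}[\1]=\id$ and $\Delta_\ast\1=\1\otimes\1$, so both sides of the claimed identity are equal to $b_1b_2$.

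For the inductive step, write $U=D_0V$ with $V:=D_1\cdots D_n$ of length $n$. First I would invoke the defining recursion \eqref{eq: Psi_diamond}:
\[
\Psi_{\diamond}[D_0V](b_1b_2) = D_0\big(\Psi_{\diamond}[V](b_1b_2)\big) - \sum_{i=1}^n \Psi_{\diamond}\big[D_1\cdots(D_0\td D_i)\cdots D_n\big](b_1b_2).
\]
Since $D_0\td D_i\in\Der(\A)$, every word occurring on the right-hand side lies in $\tens(\Der(\A))$ and has length $n$, so the induction hypothesis applies to all of them. For the first term I would furthermore use that $D_0$ is a derivation of $(\A,\cdot)$, distributing it via the Leibniz rule over each summand $\Psi_{\diamond}[V^{(1)}](b_1)\cdot\Psi_{\diamond}[V^{(2)}](b_2)$ produced by the induction hypothesis applied to $\Psi_{\diamond}[V](b_1b_2)$.

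The crux of the argument — and the step I expect to require the most bookkeeping — is the combinatorial reorganisation of the two resulting sums. The key observation is that the unshuffle coproduct satisfies the coderivation-type identity with respect to prepending a letter, namely
\[
\Delta_\ast(D_0V) = \sum_{\Delta_\ast(V)}\Big((D_0V^{(1)})\otimes V^{(2)} + V^{(1)}\otimes(D_0V^{(2)})\Big),
\]
and that in the correction terms $-\sum_i\Psi_{\diamond}[\cdots(D_0\td D_i)\cdots]$, once expanded via the induction hypothesis, the modified letter $D_0\td D_i$ simply stays in whichever of the two tensor slots of $\Delta_\ast(V)$ the original letter $D_i$ occupied. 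Hence, fixing a term $\big(\prod_{i\in I}D_i\big)\otimes\big(\prod_{j\in J}D_j\big)$ of $\Delta_\ast(V)$ and collecting everything attached to $b_1$, one obtains $D_0\big(\Psi_{\diamond}[\prod_{i\in I}D_i](b_1)\big)-\sum_{k\in I}\Psi_{\diamond}[\cdots(D_0\td D_k)\cdots](b_1)$, which by \eqref{eq: Psi_diamond} read in reverse equals $\Psi_{\diamond}[D_0\prod_{i\in I}D_i](b_1)$, and symmetrically for the factor attached to $b_2$. Summing over all $I\sqcup J=\{1,\ldots,n\}$ and comparing with the coderivation identity above identifies the total with $\sum_{\Delta_\ast(D_0V)}\Psi_{\diamond}[(D_0V)^{(1)}](b_1)\cdot\Psi_{\diamond}[(D_0V)^{(2)}](b_2)$, which closes the induction.
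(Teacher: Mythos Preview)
Your proposal is correct and follows essentially the same approach as the paper's own proof: induction on the word length, the defining recursion \eqref{eq: Psi_diamond}, the Leibniz rule for $D_0$, and the coderivation identity $\Delta_\ast(D_0V)=\sum (D_0V^{(1)})\otimes V^{(2)}+V^{(1)}\otimes(D_0V^{(2)})$ to close the induction. The only cosmetic differences are that the paper starts the induction at $n=1$ and packages the correction terms as $\Psi_\diamond[D_0\td U]$ together with the companion identity $\Delta_\ast(D_0\td U)=\sum(D_0\td U^{(1)})\otimes U^{(2)}+U^{(1)}\otimes(D_0\td U^{(2)})$, whereas you keep the sum over $i$ explicit and regroup by hand.
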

    \begin{proof} We prove the formula by induction on the length $n\geq 1$ of $U=D_1\cdots D_n$. For $n=1$, there is nothing to prove since $\Psi_\diamond[D]=D$ for all $D\in \Der(\A)$. We suppose that the formula is proven for $U=D_1\cdots D_n$ up to a certain order $n\geq 1$, and we prove it for $D_0U=D_0D_1\cdots D_n$, with $D_0\in\Der(\A)$. We have by definition:
    \[\Psi_\diamond[D_0 U](b_1b_2)=D_0\circ \Psi_\diamond[U](b_1b_2)-\Psi_\diamond[D_0 \td U](b_1b_2)\]
    For the first term of the right side of the last equality, by induction hypothesis, linearity and Leibniz rule of $D_0$ on $\A$:
    \begin{multline}
        D_0\circ \Psi_\diamond[U](b_1b_2)=\sum_{\Delta_\ast(U)} \left(D_0\circ\Psi_{\diamond}\left[U^{(1)}\right](b_1)\right)\cdot\Psi_{\diamond}\left[U^{(2)}\right](b_2)\\
        + \Psi_{\diamond}\left[U^{(1)}\right](b_1)\cdot \left(D_0\circ\Psi_{\diamond}\left[U^{(2)}\right](b_2)\right)
    \end{multline}
    For the second term, since
    \[\Delta_\ast (D_0\td U)=\sum_{\Delta_\ast(U)}(D_0\td U^{(1)})\otimes U^{(2)}+ U^{(1)}\otimes (D_0\td U^{(2)}),\]
    we have by induction hypothesis:
    \begin{multline*}
        \Psi_\diamond[D_0 \td U](b_1b_2)=\sum_{\Delta_\ast(U)}\Psi_\diamond[D_0\td U^{(1)}](b_1)\cdot \Psi_\diamond[U^{(2)}](b_2)\\ + \Psi_\diamond[U^{(1)}](b_1)\cdot \Psi_\diamond[D_0\td U^{(2)}](b_2)
    \end{multline*}
    Therefore, after factorisation:
    \begin{align*}
        &\Psi_\diamond[D_0 U](b_1b_2)\\
        &=\sum_{\Delta_\ast(U)} \Psi_{\diamond}\left[D_0 U^{(1)}\right](b_1)\cdot\Psi_{\diamond}\left[U^{(2)}\right](b_2) + \Psi_{\diamond}\left[U^{(1)}\right](b_1)\cdot \Psi_{\diamond}\left[D_0 U^{(2)}\right](b_2) \\
        &=m_\A(\Psi_{\diamond}\otimes \Psi_{\diamond})\Delta_\ast (D_0U)(b_1\otimes b_2)
    \end{align*}
    \end{proof}
We obtain as corollary an analog of \cite[Proposition 3.14]{jacques2023post}:
\begin{corollary}\label{prop: multiplicativity property representation}
    Given $b_1,b_2\in \A$ and $u=(a_1\otimes D_1)\cdots (a_n\otimes D_n)\in \env_{[\cdot,\cdot]}(\A\otimes\Der(\A))$:
    \begin{equation}
        \begin{aligned}
        \rhobar_{\btr}(u)(b_1b_2)&=m_\A(\rhobar_{\btr}\otimes \rhobar_{\btr})(\Delta_\ast u)(b_1\otimes b_2)\\
        &=\sum_{\Delta_\ast(u)} \rhobar_{\btr}(u^{(1)})(b_1)\rhobar_{\btr}(u^{(2)})(b_2)\\
        &=\sum_{I\sqcup J=\{1,\ldots,n\}}\rhobar_{\btr}\left(\prod_{i\in I}(a_i\otimes D_i)\right)(b_1)\, \rhobar_{\btr}\left(\prod_{j\in J}(a_j\otimes D_j)\right)(b_2)
        \end{aligned}
    \end{equation}
\end{corollary}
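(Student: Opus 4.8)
The plan is to deduce the corollary directly from the explicit formula for $\rhobar_{\btr}$ obtained in Theorem~\ref{theo: deformed representation}, together with the multiplicativity of $\Psi_{\diamond}$ proved in Proposition~\ref{prop: multiplicativity property Psi}; no new idea is needed beyond a careful reindexing.

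First I would set $u=(a_1\otimes D_1)\cdots(a_n\otimes D_n)$ and $U:=D_1\cdots D_n\in\tens(\Der(\A))$, and apply $\rhobar_{\btr}(u)$ to the product $b_1 b_2$. By \eqref{eq: representation deformed algebra derivation} and the definition \eqref{eq:aD} of the action of $\A$ on $\End(\A)$, this equals $(a_1\cdots a_n)\,\Psi_{\diamond}[U](b_1 b_2)$, seen as an element of $\A$. Next I expand $\Psi_{\diamond}[U](b_1 b_2)$ using Proposition~\ref{prop: multiplicativity property Psi}, that is, as $\sum_{I\sqcup J=\{1,\ldots,n\}}\Psi_{\diamond}\left[\prod_{i\in I}D_i\right](b_1)\cdot\Psi_{\diamond}\left[\prod_{j\in J}D_j\right](b_2)$.

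The third step is a reorganisation using commutativity and associativity of $(\A,\cdot)$: for each ordered partition $I\sqcup J=\{1,\ldots,n\}$ one has $a_1\cdots a_n=\left(\prod_{i\in I}a_i\right)\left(\prod_{j\in J}a_j\right)$, and I distribute these scalar factors onto the two arguments. Then, invoking \eqref{eq: representation deformed algebra derivation} once more in the reverse direction, $\left(\prod_{i\in I}a_i\right)\Psi_{\diamond}\left[\prod_{i\in I}D_i\right](b_1)=\rhobar_{\btr}\left(\prod_{i\in I}(a_i\otimes D_i)\right)(b_1)$, and similarly for $J$ and $b_2$. This yields the last displayed equality of the statement. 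The remaining two forms — the $m_\A(\rhobar_{\btr}\otimes\rhobar_{\btr})(\Delta_\ast u)(b_1\otimes b_2)$ expression and the Sweedler form $\sum_{\Delta_\ast(u)}\rhobar_{\btr}(u^{(1)})(b_1)\,\rhobar_{\btr}(u^{(2)})(b_2)$ — are then merely rewritings of this, since the coshuffle coproduct $\Delta_\ast$ applied to the monomial $(a_1\otimes D_1)\cdots(a_n\otimes D_n)$ is precisely the sum over ordered partitions $I\sqcup J$ of $\{1,\ldots,n\}$, as recorded in \eqref{eq:cop1}.

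I do not expect any genuine obstacle; the argument is purely combinatorial bookkeeping. The one point to state with care is the compatibility of the two coproducts denoted $\Delta_\ast$: the one on $\env_{[\cdot,\cdot]}(\A\otimes\Der(\A))$ acting on $u$ and the one on $\tens(\Der(\A))$ acting on $U$ in Proposition~\ref{prop: multiplicativity property Psi} are both the (co)shuffle coproduct indexed by the same partitions of $\{1,\ldots,n\}$, and the scalar part $a_1\cdots a_n$ splits along exactly those partitions by commutativity of $\A$. Once this identification is made explicit, Theorem~\ref{theo: deformed representation} and Proposition~\ref{prop: multiplicativity property Psi} dovetail and the three claimed equalities follow at once. (Alternatively, one could prove the corollary from scratch by induction on $n$ in the style of the proof of Proposition~\ref{prop: multiplicativity property Psi}, but routing it through that proposition is shorter.)
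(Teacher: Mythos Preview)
Your proposal is correct and follows essentially the same route as the paper's own proof: both apply Theorem~\ref{theo: deformed representation} to reduce $\rhobar_{\btr}(u)(b_1b_2)$ to $a_1\cdots a_n\,\Psi_{\diamond}[D_1\cdots D_n](b_1b_2)$, invoke Proposition~\ref{prop: multiplicativity property Psi}, and then factor the scalar $a_1\cdots a_n$ across the partition $I\sqcup J$ via commutativity of $(\A,\cdot)$ to recover $\rhobar_{\btr}$ on each piece. Your remark on matching the two coshuffle coproducts is a welcome clarification, but the argument is otherwise identical to the paper's.
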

\begin{proof} It is a simple factorisation of the terms $a_1\cdots a_n$, ideed on one side:
    \[\rhobar_{\btr}(u)(b_1b_2)=a_1\cdots a_n\Psi_{\diamond}[D_1\cdots D_n](b_1b_2)\]
    on the other side:
    \begin{multline*}
        \sum_{I\sqcup J=\{1,\ldots,n\}}\rhobar_{\btr}\left(\prod_{i\in I}(a_i\otimes D_i)\right)(b_1)\, \rhobar_{\btr}\left(\prod_{j\in J}(a_j\otimes D_j)\right)(b_2)\\
        = a_1\cdots a_n \sum_{I\sqcup J=\{1,\ldots,n\}} \Psi_{\diamond}\left[\prod_{i\in I}D_i\right](b_1)\cdot\Psi_{\diamond}\left[\prod_{j\in J}D_j\right](b_2)
    \end{multline*}
    We conclude using Proposition \ref{prop: multiplicativity property Psi}.
\end{proof}

\medskip
\section{Application to regularity structures: a pre-Lie algebra structure.}\label{sec: regularity structures}
\subsection{Context.}
We want here to give an application of the results of the previous subsection to new tree-free approach to regularity structures developed in \cite{LOT}, using the point of view of \cite{jacques2023post}. We first remind briefly some results and we explain how it fits in our present setting:\\
We note $\N=\{0,1,\ldots\}$ and given an integer $d\geq 1$, we use the following notations: 
\[
\N^d_*:=\N^d\setminus\{\0\}, \qquad \0:=(0,\ldots,0)\in\N^d,
\]
and we consider the polynomial algebra:
\begin{equation*}
\A:=\R[\z_k,\z_\n]_{k\in\N,\n\in\N^d_*}
\end{equation*}
where $\{\z_k,\z_\n: k\in\N,\n\in\N^d_*\}$ are the indeterminates, which is endowed with the free commutative and associative product, and for which we denote by $\1\in\A$ the unit.

To write the monomials, we adopt as in \cite{jacques2023post} the \textit{multi-index notation}: we define $\M$ as the set of compactly supported $\gamma:\N\sqcup\N^d_*\to\N$, namely $\gamma_i\ne0$ only for finitely many $i\in\N\sqcup\N^d_*$. Elements of $\M$ are called \textit{multi-indices}.  

A monomial of the polynomial algebra $\A$ is therefore given by:
$$\z^\gamma:=\prod_{i\in\N\sqcup\N^d_*}\z_i^{\gamma_i}, \quad \gamma\in\M,
\qquad \z^0=\1.$$

Note that the addition is defined on $\M$ as: if $\gamma^1,\gamma^2\in\M$ then 
\begin{equation}\label{eq:sumga}
\gamma_i:=\gamma^1(i)+\gamma^2(i), \qquad i\in\N\sqcup\N^d_*,
\end{equation}
defines a new element in $\M$. Then the sum in $\M$ defined in \eqref{eq:sumga} allows to describe the product in $\A$
\[
\z^{\gamma}\z^{\gamma'}=\z^{\gamma+\gamma'}, \qquad \gamma,\gamma'\in\M.
\]

As in \cite{jacques2023post}, we consider a space $\D$ of derivations on $\A$ that is freely generated by a family of derivations $\B_\D\subset \D$ (which is therefore a linear basis of $\D$), which can be split into two subfamilies (see \cite[(3.9) and (3.12)]{LOT}):
\begin{itemize}
    \item The \textit{tilt} derivations $\{D^{(\n)}\}_{\n\in\N^d}$, defined by:
    \begin{equation}\label{eq: tilt}
        D^{(\0)}:=\sum_{k\ge 0}(k+1)\z_{k+1}\partial_{\z_k}\qquad \text{and}\qquad D^{(\n)}:=\partial_{z_\n},~ \text{for}~ \n\in\N^d_*.
    \end{equation}
    \item The \textit{shift} derivations $\{\partial_i\}_{i\in\{1,\ldots,d\}}$, defined by:
    \begin{equation}\label{eq: shift}
        \partial_i:=\sum_{\n\in\N^d}(n_i+1)\z_{\n+\e_i}D^{(\n)}
    \end{equation}
    where $\e_1=(1,0,\ldots,0)$, $\e_2=(0,1,\ldots,0)$, etc.
\end{itemize}
Denoting by $e_k\in\M$ the multi-index $e_k(i)=\ind_{(i=k)}$ for $k\in\N$ and $i\in\N\sqcup\N^d_*$, and
similarly $e_\n\in\M$ for $\n\in\N^d_*$, we can compute the action of the derivations on $\z^\gamma\in\A$:
\begin{equation}\label{eq:D0}
    D^{(\0)}\z^\gamma=\sum_{k\geq 0}(k+1)\gamma_k\z^{\gamma+e_{k+1}-e_k}, \qquad
    D^{(\n)}\z^\gamma=\gamma_\n\,\z^{\gamma-e_\n},\quad\n\in\N^d_*
\end{equation}
and  for all $i\in\{1,\ldots,d\}$:
\begin{equation}\label{eq:partial_i}
    \partial_i\z^\gamma=\sum_{k\geq 0}(k+1)\gamma_k\z^{\gamma+e_{k+1}-e_k+e_{\e_i}}+\sum_{\n\in\N^d_*}(n_i+1)\gamma_\n\z^{\gamma-e_\n+e_{\n+\e_i}}.
\end{equation}
Therefore the linear basis for $\D\subset \Der(\A)$ is
\begin{equation}
    \B_\D:=\{\partial_i\}_{i\in\{1,\ldots,d\}}\cup \{D^{(\n)}\}_{\n\in\N^d}.
\end{equation}
Then, our derivations space of interest is given as:
$$\D:=\R\cdot \B_\D$$

For the computation of the composition product in $(\Hom(\A),\circ)$ between derivations of this family, we refer to \cite[Section 4.2]{jacques2023post}. The Lie bracket $[\cdot,\cdot]_\circ$ on $\Der(\A)$, given as the commutator for the composition product, is given for all $\n,\n'\in\N^d$, with $\n=(n_1,\ldots,n_d)$ and all $i,j\in\{1,\ldots,d\}$ by:
\begin{align}
    [D^{(\n)},D^{(\n')}]_\circ&=0,\\
    [\partial_i,\partial_j]_\circ&=0,\\
    [\partial_i, D^{(\n)}]_\circ&= -n_iD^{(\n-\e_i)}\in \D,
\end{align}
with the understanding that $\partial_i\td D^{(\n)}$ vanishes if $n_i=0$, in particular $[\partial_i, D^{(\n)}]_\circ=0$. Those equalities show that $(\D,[\cdot,\cdot]_\circ)$ is a Lie subalgebra of $(\Der(\A),[\cdot,\cdot]_\circ)$. Note that in particular, for all $i\in\{1,\ldots,d\}$:
\[[\partial_i,D^{(\0)}]_\circ=0.\]

The stability of $\D$ under $[\cdot,\cdot]_\circ$ along with Theorem \ref{theo: post-Lie structure from derivations}, implies as a direct corollary that:
\begin{prop}
    $(\A\otimes \D,\tr,[\cdot,\cdot])$ is a Post-Lie algebra, for the the post-Lie structure $(\tr,[\cdot,\cdot])$ has been defined in Theorem \ref{theo: post-Lie structure from derivations}.
\end{prop}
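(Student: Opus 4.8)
The plan is to deduce this as an immediate consequence of Theorem \ref{theo: post-Lie structure from derivations}, which already establishes that $(\A\otimes\Der(\A),\tr,[\cdot,\cdot])$ is a post-Lie algebra. It therefore suffices to check that $\A\otimes\D$ is a linear subspace of $\A\otimes\Der(\A)$ that is stable under both bilinear operations $\tr$ and $[\cdot,\cdot]$, since the three defining conditions of Definition \ref{def: post-Lie} are universally quantified identities and consequently descend to any such stable subspace.

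First I would record stability under the bracket: by \eqref{eq: post-lie bracket derivation}, for $a_1,a_2\in\A$ and $D_1,D_2\in\D$ one has $[a_1\otimes D_1,a_2\otimes D_2]=a_1a_2\otimes[D_1,D_2]_\circ$, and the bracket computations carried out just above --- namely $[D^{(\n)},D^{(\n')}]_\circ=0$, $[\partial_i,\partial_j]_\circ=0$ and $[\partial_i,D^{(\n)}]_\circ=-n_iD^{(\n-\e_i)}\in\D$ --- show that $[\D,\D]_\circ\subset\D$, so the resulting element lies in $\A\otimes\D$. Stability under $\tr$ is then immediate from \eqref{eq: post-lie product derivation}: $(a_1\otimes D_1)\tr(a_2\otimes D_2)=a_1D_1(a_2)\otimes D_2$, whose derivation slot is simply $D_2\in\D$, so this element again lies in $\A\otimes\D$.

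Since $\A\otimes\D$ is thus closed under both $\tr$ and $[\cdot,\cdot]$, restricting the two operations to $\A\otimes\D$ yields a structure that still satisfies conditions (1)--(3) of Definition \ref{def: post-Lie}, hence $(\A\otimes\D,\tr,[\cdot,\cdot])$ is a post-Lie algebra. There is no genuine obstacle here; the only input beyond Theorem \ref{theo: post-Lie structure from derivations} is the closedness $[\D,\D]_\circ\subset\D$, which has just been verified explicitly on the basis $\B_\D$.
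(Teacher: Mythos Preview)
Your proposal is correct and follows exactly the paper's reasoning: the paper states this proposition as ``a direct corollary'' of the stability of $\D$ under $[\cdot,\cdot]_\circ$ together with Theorem~\ref{theo: post-Lie structure from derivations}, and you have simply made explicit the two stability checks (under $[\cdot,\cdot]$ via the bracket computations on $\B_\D$, and under $\tr$ trivially from \eqref{eq: post-lie product derivation}). There is nothing to add.
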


Then as in \cite[Section 4.2]{jacques2023post}, denoting $\1$ the unit element in $\A$, we define the space $L_0$ as the subspace of $\A\otimes \Der(\A)$ generated by the basis elements $\{\z^\gamma \otimes D^{(\n)}\}_{\n\in\N^d,\gamma\in\M}$ and $ \{\1\otimes \partial_i\}_{i\in\{1,\ldots,d\}}$, namely:
\begin{equation}\label{eq: post-Lie algebra L_0}
L_0:=\R\{\1\otimes\partial_i\}_{i\in\{1,\ldots,d\}} \oplus \R\left\{\z^\gamma \otimes D^{(\n)}\right\}_{\gamma\in\M,\n\in\N^d},
\end{equation}

\begin{theorem}[\cite{jacques2023post} Theorem 4.1]\label{thm:postLOT} The space $L_0$ is a sub-post-Lie algebra of $\A\otimes \D$, for the canonical post-Lie algebra structure $(\tr,[\cdot,\cdot])$ given in Theorem \ref{theo: post-Lie structure from derivations}.
\end{theorem}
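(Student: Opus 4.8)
The plan is to exploit that $(\A\otimes\D,\tr,[\cdot,\cdot])$ is already a post-Lie algebra by Theorem \ref{theo: post-Lie structure from derivations} (together with the stability $[\D,\D]_\circ\subseteq\D$), so that every identity of Definition \ref{def: post-Lie} is automatically inherited by any linear subspace that is stable under both operations. Hence it will suffice to prove that $L_0$ is closed under $\tr$ and under $[\cdot,\cdot]$. Since both operations are bilinear, I only need to check closure on pairs drawn from the generating family $\{\1\otimes\partial_i\}_{i=1}^d\cup\{\z^\gamma\otimes D^{(\n)}\}_{\gamma\in\M,\,\n\in\N^d}$. The one point that deserves attention is that this family is \emph{asymmetric}: it contains $\z^\gamma\otimes D^{(\n)}$ for every $\gamma\in\M$, but among the elements whose derivation-component is a shift it contains only $\1\otimes\partial_i$. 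So the real content is to verify that neither $\tr$ nor $[\cdot,\cdot]$ ever produces a term $\z^\gamma\otimes\partial_i$ with $\gamma\neq\0$.

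For the bracket I would use formula \eqref{eq: post-lie bracket derivation} together with the commutation relations in $\D$ recalled just above the statement. If both arguments are of shift type, $[\1\otimes\partial_i,\1\otimes\partial_j]=\1\otimes[\partial_i,\partial_j]_\circ=0$. If both are of tilt type, $[\z^\gamma\otimes D^{(\n)},\z^{\gamma'}\otimes D^{(\n')}]=\z^{\gamma+\gamma'}\otimes[D^{(\n)},D^{(\n')}]_\circ=0$. In the mixed case one gets $[\1\otimes\partial_i,\z^\gamma\otimes D^{(\n)}]=\z^\gamma\otimes[\partial_i,D^{(\n)}]_\circ=-n_i\,\z^\gamma\otimes D^{(\n-\e_i)}$, which lies in $L_0$ (and vanishes when $n_i=0$, the case $\n-\e_i\notin\N^d$). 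In particular no $\z^\gamma\otimes\partial_i$ term is ever created, precisely because $[\partial_i,\cdot]_\circ$ always lands in the span of the $D^{(\m)}$'s.

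For $\tr$ I would use formula \eqref{eq: post-lie product derivation}: $(a_1\otimes D_1)\tr(a_2\otimes D_2)=a_1D_1(a_2)\otimes D_2$, so the derivation-component is left unchanged and only the scalar-component $a_1D_1(a_2)$ has to be controlled. When the right argument is $\1\otimes\partial_i$, one gets $a_1D_1(\1)\otimes\partial_i=0$ since every derivation annihilates the unit. When the right argument is $\z^{\gamma'}\otimes D^{(\n)}$, the result is $\big(a_1D_1(\z^{\gamma'})\big)\otimes D^{(\n)}$; since $D_1$ (whether it is some $\partial_i$ or some $D^{(\m)}$) maps the polynomial $\z^{\gamma'}$ to a polynomial — explicitly by \eqref{eq:D0} and \eqref{eq:partial_i} — and $a_1\in\A$, the product $a_1D_1(\z^{\gamma'})$ is again a polynomial, i.e.\ a linear combination of monomials $\z^\delta$; hence $\big(a_1D_1(\z^{\gamma'})\big)\otimes D^{(\n)}$ lies in $\R\{\z^\delta\otimes D^{(\n)}\}_\delta\subseteq L_0$. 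This exhausts all cases, so $L_0$ is stable under $\tr$ and $[\cdot,\cdot]$ and therefore a sub-post-Lie algebra of $\A\otimes\D$.

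I do not expect a genuine obstacle here: the argument is a finite case analysis. The only place requiring care — and the reason the asymmetric generating set is the ``right'' one — is the bookkeeping ensuring that shift-type generators reappear only with coefficient $\1$; this is exactly compatible with the two structural facts $D(\1)=0$ for derivations $D$ and $[\partial_i,\cdot]_\circ\subseteq\R\{D^{(\m)}\}_\m$, which is what makes the closure go through.
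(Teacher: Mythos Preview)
The paper does not give its own proof of this statement; it merely cites it from \cite{jacques2023post}. Your argument is correct and is the natural one: since $(\A\otimes\D,\tr,[\cdot,\cdot])$ is already post-Lie, you need only check that $L_0$ is closed under both operations, and your case analysis on the generating family does exactly this, with the key observations being $D(\1)=0$ for the $\tr$-closure and $[\partial_i,D^{(\n)}]_\circ\in\R\{D^{(\m)}\}_\m$ for the bracket-closure.
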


\subsection{A pre-Lie algebra structure.}\label{subsec: Pre-Lie regularity structures}
Now we are aiming at applying the Proposition \ref{prop: pre-Lie deformation} to our present post-Lie structure $(L_0,\tr,[\cdot,\cdot])$. We are looking for a bilinear operation $\td\in \Hom(\D^{\otimes 2},\D)$ such that $[\cdot,\cdot]_{\diamond}=[\cdot,\cdot]_\circ$ and such that $(\D,\td)$ is a pre-Lie algebra.

We follow the procedure given in remark \ref{rem: construction pre-Lie product}, considering the subset of $(\B_\D)^2$ of couples of derivations $(D,D')$ such that $[D,D']_\circ\neq 0$ and fixing a strict total order $<$ on it, we define: 
\begin{equation}
        D\td D' := \begin{cases}
        [D,D']_\circ\quad\text{if}\quad ([D,D']_\circ\neq 0) \wedge (D<D')\\
        0\qquad\qquad\text{else}
    \end{cases}
    \end{equation}
As seen above, those couples are the couples of the type $(D^{(\n)},\partial_i)$ (and conversely $(\partial_i,D^{(\n)})$), with $\n\in\N^d_*,~i\in\{1,\ldots,d\}$, and we make the following choice of ordering: 
$$\partial_i<D^{(\n)},\qquad \forall~\n\in\N^d_*,~\forall~i\in\{1,\ldots,d\}$$
Therefore, we obtain the following multiplication table for $\td$ on the basis elements $\B_\D$ of the derivations space $\D$, for all $\n,\n'\in\N^d$, with $\n=(n_1,\ldots,n_d)$ and all $i,j\in\{1,\ldots,d\}$:
\begin{equation}\label{eq: multiplication table}
\begin{aligned}
    D^{(\n)}\td D^{(\n')}&=0,\qquad &\partial_i\td \partial_j&=0 \\
    D^{(\n)} \td \partial_i&=0, &\partial_i\td D^{(\n)}&=-n_iD^{(\n-\e_i)} 
\end{aligned}
\end{equation}
With the understanding that $\partial_i\td D^{(\n)}$ vanishes if $n_i=0$, in particular, we have that for all $i\in\{1,\ldots,d\}$:
\[\partial_i\td D^{(\0)}=0\]
Let us now prove that $\td$ is a pre-Lie product, that is to say, we prove that
$\ass_{\td}(D_1,D_2,D_3)$ is symmetrical in $(D_1,D_2)$ for all $D_1,D_2,D_3\in\D$:\\
Fix $\n,\n',\n''\in\N^d_*$ and $i,j,k\in\{1,\ldots,d\}$. 
From the null cases in the multiplication table of $\td$ above, we remark that:
$$\ass_{\td}(D^{(\n)},D^{(\n')},D^{(\n'')})=0,\quad \text{and\quad }\ass_{\td}(D_1,D_2,\partial_i)=0,\quad \forall D_1,D_2\in \D$$ 
which drastically reduces the number of cases to be treated.
Then, denoting $(n_1,\ldots,n_d)$, and by writing down the associator $\ass_{\td}$, we have that:
\begin{align*}
    \ass_{\td}(D^{(\n)}, \partial_i , D^{(\n')})&=D^{(\n)} \td (\partial_i \td D^{(\n')}) - (D^{(\n)} \td \partial_i) \td D^{(\n')}\\
    &=-n_i' (D^{(\n)} \td D^{(\n'-\e_i)})-0\\
    &=0,
\end{align*}
since $D^{(\n)} \td \partial_i=D^{(\n)} \td D^{(\n'-\e_i)}=0$.\\
We also have that:
\begin{align*}
    \ass_{\td}(\partial_i, D^{(\n)} , D^{(\n')})&=\partial_i \td ( D^{(\n)}\td D^{(\n')}) - (\partial_i \td D^{(\n)}) \td D^{(\n')}\\
    &=0+n_i(D^{(\n-\e_i)} \td D^{(\n')})\\
    &=0,
\end{align*}
since $D^{(\n)}\td D^{(\n')}=D^{(\n-\e_i)} \td D^{(\n')}=0$.\\
Finally, since $\partial_i \td \partial_j=0$:
\begin{align*}
    \ass_{\td}(\partial_i, \partial_j , D^{(\n)})&=\partial_i \td ( \partial_j\td D^{(\n)}) - (\partial_i \td \partial_j) \td D^{(\n)}\\
    &=\partial_i \td ( \partial_j\td D^{(\n)})-0\\
    &=\begin{dcases}
    n_i (n_i-1)D^{(\n-2\e_i)} \quad &\text{if}\quad i=j\\
    n_i n_j D^{(\n-\e_i-\e_j)}  &\text{if}\quad i\neq j
    \end{dcases}
\end{align*}
which is symmetric in $(\partial_i ,\partial_j)$.\\

We can extend $\td$ from $\D\subset \Der(\A)$ to $\A\otimes \D\subset \A\otimes \Der(\A)$ as a bilinear operation $\td\in\Hom((\A\otimes \D)^{\otimes 2},\A\otimes \D)$ as made before in \eqref{eq: extension connections on derivations}. We derive from \eqref{eq: multiplication table} the following multiplication table of $\td$ on $L_0\subset \A\otimes \Der(\A)$, given for all $\gamma,\gamma'\in\M$, $\n,\n'\in\N^d_*$, and $i,j\in\{1,\ldots,d\}$ by:
\begin{equation}\label{eq: multiplication table connection}
\begin{aligned}
    (\1\otimes \partial_i)&\td (\1\otimes \partial_j)& &=  \1\otimes (\partial_i\td \partial_j)=\1\otimes 0=0,\\
    (\z^\gamma \otimes D^{(\n)}) &\td  (\1\otimes \partial_i)&  &=\z^\gamma \otimes (D^{(\n)} \td \partial_i)=\z^\gamma \otimes 0= 0, \\
    (\z^{\gamma'}\otimes D^{(\n')})&\td  (\z^\gamma \otimes D^{(\n)})& &=\z^{\gamma+\gamma'}\otimes (D^{(\n')}\td D^{(\n)})=\z^{\gamma+\gamma'}\otimes0=0,\\
    (\1\otimes \partial_i) &\td  (\z^\gamma \otimes D^{(\n)})& &= \z^\gamma\otimes (\partial_i\td D^{(\n)})=-n_i(\z^\gamma\otimes  D^{(\n-\e_i)}) 
    \end{aligned}
\end{equation}
Futhermore, we observe that $L_0$ is stable by $\td$, which brings us to the following corollary:

\begin{theorem}\label{theo: pre-Lie deformation RG} $(\A\otimes \D,\tr+\td)$ is a pre-Lie algebra, and $(L_0,\tr+\td)$ is a sub-pre-Lie algebra. These are respectively the $\td$-gpL deformation of the post-Lie algebras $(\A\otimes \D, \tr,[\cdot,\cdot])$, and of its sub-post-Lie algebra $(L_0,\tr,[\cdot,\cdot])$.
\end{theorem}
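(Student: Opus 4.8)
The plan is to apply Proposition \ref{prop: pre-Lie deformation} (the pre-Lie deformation criterion) to the post-Lie algebra $(L_0,\tr,[\cdot,\cdot])$ of Theorem \ref{thm:postLOT}, with the bilinear operation $\td$ defined on $\D$ by the multiplication table \eqref{eq: multiplication table} and extended to $\A\otimes\D$ by \eqref{eq: extension connections on derivations}. By Corollary \ref{coro: pre-Lie deformation for post-Lie alg of der.}, it suffices to verify two things: first, that $(\D,\td)$ is a pre-Lie algebra; second, that the commutator of $\td$ recovers the composition bracket, i.e.\ $[\cdot,\cdot]_{\diamond}=[\cdot,\cdot]_\circ$ on $\D$. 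The first point is exactly the case-by-case check of the pre-Lie identity $\ass_{\td}(D_1,D_2,D_3)=\ass_{\td}(D_2,D_1,D_3)$ already carried out above: the null entries in \eqref{eq: multiplication table} kill all associators except those of the form $\ass_\td(\partial_i,\partial_j,D^{(\n)})$, which are manifestly symmetric in $(\partial_i,\partial_j)$ since they do not involve the first two slots in a distinguishable way. The second point follows immediately from \eqref{eq: multiplication table}: for each ordered pair we have $D\td D'-D'\td D=[D,D']_\circ$ (the only nonzero case being $\partial_i\td D^{(\n)}-D^{(\n)}\td\partial_i=-n_iD^{(\n-\e_i)}=[\partial_i,D^{(\n)}]_\circ$, and all other brackets vanish on both sides).

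Next I would verify the compatibility condition \eqref{eq: compatibility derivation}, namely that $x\tr\cdot\in\Der(\A\otimes\D,\td)$ for all $x\in\A\otimes\D$. But this was already established in full generality in the proof of Theorem \ref{th: post-Lie conditions on A tens Der(A)}, using only associativity and commutativity of $(\A,\cdot)$ together with the defining formulas \eqref{eq: post-lie product derivation} and \eqref{eq: extension connections on derivations}; no new computation is needed. With $(\D,\td)$ pre-Lie, $[\cdot,\cdot]_{\diamond}=[\cdot,\cdot]_\circ$, and \eqref{eq: compatibility derivation} in hand, Corollary \ref{coro: pre-Lie deformation for post-Lie alg of der.} gives at once that $(\A\otimes\D,\tr+\td)$ is a pre-Lie algebra, realised as the $\td$-gpL deformation of $(\A\otimes\D,\tr,[\cdot,\cdot])$, since by \eqref{eq: notations} the deformed bracket is $\lb\cdot,\cdot\rb=[\cdot,\cdot]-[\cdot,\cdot]_\diamond$, which vanishes precisely because $[\cdot,\cdot]_\diamond=[\cdot,\cdot]_\circ=[\cdot,\cdot]$ on $\A\otimes\D$.

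It remains to pass to the subspace $L_0$. Here I would check that $L_0$ is stable under $\btr=\tr+\td$. Stability under $\tr$ is part of Theorem \ref{thm:postLOT}, so I only need stability under $\td$; inspecting the multiplication table \eqref{eq: multiplication table connection}, every output is either $0$ or of the form $-n_i(\z^\gamma\otimes D^{(\n-\e_i)})$, and since $\n-\e_i\in\N^d$ whenever the coefficient $n_i$ is nonzero, this lies in $L_0$ by \eqref{eq: post-Lie algebra L_0}. Being a $\btr$-stable subspace of a pre-Lie algebra, $(L_0,\btr)$ is automatically a sub-pre-Lie algebra, and it is the $\td$-gpL deformation of $(L_0,\tr,[\cdot,\cdot])$ by restricting the deformation. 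I do not expect any genuine obstacle: the only mildly delicate point is bookkeeping the edge case $n_i=0$ (so that $\partial_i\td D^{(\n)}=0$ and $D^{(\n-\e_i)}$ need never be interpreted outside $\N^d$), but the convention stated after \eqref{eq: multiplication table} handles this cleanly, and all the real work — the pre-Lie identity for $\td$ and the compatibility condition — has already been done in the lines preceding the statement and in Theorem \ref{th: post-Lie conditions on A tens Der(A)}.
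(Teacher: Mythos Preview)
Your proposal is correct and follows essentially the same route as the paper: both invoke Corollary~\ref{coro: pre-Lie deformation for post-Lie alg of der.} after noting that $(\D,\td)$ is pre-Lie with $[\cdot,\cdot]_{\diamond}=[\cdot,\cdot]_\circ$ (verified in the lines preceding the theorem), and then check stability of $L_0$ under $\td$ via the multiplication table~\eqref{eq: multiplication table connection}. Your additional remarks on the compatibility condition and the edge case $n_i=0$ are sound but already absorbed into the cited corollary and the stated conventions, so the paper's proof simply omits them.
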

\begin{proof}
    We have proved above that $(\D,\td)$ is a post-Lie algebra and that $[\cdot,\cdot]_{\diamond}=[\cdot,\cdot]_\circ$, which are the hypothesis of the Corollary \ref{coro: pre-Lie deformation for post-Lie alg of der.}, which implies that $(\A\otimes \D,\tr+\td)$ is a pre-Lie algebra. The multiplication table \eqref{eq: multiplication table connection} shows that $L_0$ is stable by $\td$ in $\A\otimes \D$, and as it is also stable by $\tr$, we deduce that it is stable by $\tr+\td$, which concludes the proof.
\end{proof}
Denoting again $\btr:=\tr+\td$, we can derive easily its multiplication table on $\B_{L_0}$:
\begin{align}
    (\1\otimes \partial_i)&\btr (\1\otimes \partial_j)& &=0, \label{eq: partial btr partial}\\
    (\z^\gamma \otimes D^{(\n)}) &\btr (\1\otimes \partial_i)& &=0, \label{eq: der btr partial}\\
    (\z^{\gamma'}\otimes D^{(\n')})&\btr (\z^\gamma \otimes D^{(\n)})& &=\z^{\gamma'} D^{(\n')}\z^{\gamma}\otimes D^{(\n)}, \label{eq: der btr der}\\
    (\1\otimes \partial_i) &\btr (\z^\gamma \otimes D^{(\n)})& &=\partial_i\z^\gamma\otimes D^{(\n)}-n_i(\z^\gamma\otimes  D^{(\n-\e_i)})\label{eq: partial btr der}
\end{align} 

We mention the interesting observation that the commutator $[\cdot,\cdot]_{\btr}$ is equal to the commutator $[\cdot,\cdot]_\tr$ of \cite[\S 3.10]{LOT}. However the space $L$ is nons-table under the bilinear operation $\tr$ defined in \cite[\S 3.8]{LOT}.

\subsection{The dual Hopf algebra structure}
Recalling the Guin-Oudom Hopf algebra structure of Theorem \ref{prop: associative product}, we have a Hopf algebra $(\sym(L),\star_{\btr},\Delta_\ast,\ind,\varepsilon)$.
In that section, we aim at constructing the dual structure $(\ast,\Delta_{\star_{\btr}})$ of $(\star_{\btr},\Delta_\ast)$ on $\sym(L)$. 

We first define the product $\ast$ on the symmetric algebra $\sym(L)$ that is dual to the coshuffle coproduct $\Delta_\ast$ and that we will need later to define the structure group. We begin by defining two dual bases:
\begin{itemize}
    \item The monomial basis $\overline{\B}_{\sym(L)}$ of $\sym(L)$ is given as the set of monomials: 
    \begin{equation}\label{eq: monomial basis}
    \Bbar_{\sym(L)}:=\{\mathds 1\}\sqcup\Bigg\{\,x_1^{m_1}\cdots x_k^{m_k}: \, k, m_1,\ldots,m_k\geq 1,\, x_i\in\B_L
    \Bigg\}.
    \end{equation}
    \item The twisted monomial basis is another basis of $\sym(L)$ given by renormalising the monomial by their symmetry factor, that is to say:
    \begin{equation}\label{eq: twisted monomial basis}
    \B_{\sym(L)}:=\{\mathds 1\}\sqcup\Bigg\{\frac1{m_1!\cdots m_k!}\,x_1^{m_1}\cdots x_k^{m_k}: \, k, m_1,\ldots,m_k\geq 1,\, x_i\in\B_L
    \Bigg\}.
    \end{equation}
\end{itemize}
 
In particular, an element of $\Bbar_{\sym(L)}\setminus\{\ind\}$ can be written:
\begin{equation}\label{eq:basis1}
        (\1\otimes\partial_1)^{m_1}\ldots (\1\otimes\partial_d)^{m_d}(\z^{\gamma_1}\otimes D^{(\n_1)} )\ldots (\z^{\gamma_k}\otimes D^{(\n_k)})
    \end{equation}
    where $(m_1,\ldots,m_d)\in \N^d$ and $(\gamma_i, \n_i)\in \M\times \N^d$ for all $i\in\{1,\ldots,k\}$. We have a map $T:\B_{\sym(L)}
\to{\overline\B}_{\sym(L)}$ given by $T(\mathds 1)=\mathds 1$ and
\begin{equation}\label{eq:T}
T\left(\frac1{m_1!\cdots m_k!}x_1^{m_1}\cdots x_k^{m_k}\right)=x_1^{m_1}\cdots x_k^{m_k},
\end{equation}
which has a unique linear extension $T:\envU(L)\to\envU(L)$.
Then we introduce the pairing on $\sym(L)\otimes\sym(L)$ given by the bilinear extension of 
\begin{equation}\label{eq:dualityL}
\B_{\sym(L)}\times{\overline\B}_{\sym(L)}\ni(u,v)\mapsto \la u,v\ra :=\ind_{(Tu=v)}.
\end{equation}

As in \cite[\S 2.5]{jacques2023post}, we consider the polynomial product on $\sym(L)$, which is defined on the monomial basis ${\overline\B}_{\sym(L)}$ for all basis element $x_i\in\B_L$ and $n_i,m_i\in\N$ by:
\begin{equation}\label{eq: polynomial product symmetric alg}
	\prod_{i\in I} x_i^{n_i}\ast\prod_{i\in I} x_i^{m_i}=\prod_{i\in I} x_i^{n_i+m_i}.
\end{equation}
which gives an associative and commutative graded algebra $(\sym(L),\ast,\ind)$ which is dual to the coproduct $(\Delta_\ast,\varepsilon)$ in the sense that for all $u,v,w\in \sym(L)$:
\begin{equation*}
	\la w,u*v\ra =\la \Delta_\ast w,u\otimes v\ra,
\end{equation*}
\
Now, in order to dualise $(\star_{\btr},\ind)$, we use the key finiteness Assumption of \cite[Assumption 2.14.]{jacques2023post}, which we need to prove that it is satisfied, as shown in the next Proposition:
\begin{prop}\label{prop: finiteness btr}
For all $w\in \B_L$ the set $$\{(u,v)\in \B_L\times\B_L: \la u\btr v,w \ra\neq 0\}$$ is finite.
\end{prop}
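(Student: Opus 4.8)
The plan is to exploit the fact that, on the basis $\B_L$, the product $\btr$ is given by the fully explicit table \eqref{eq: partial btr partial}--\eqref{eq: partial btr der}, so that for a fixed $w\in\B_L$ one may simply enumerate the sources of a term proportional to $w$ in $u\btr v$ (equivalently, the pairs $(u,v)$ with $\la u\btr v,w\ra\neq 0$) and check that each resulting list is finite. First I would split on the derivation component of $w$. If $w=\1\otimes\partial_k$ the set is empty, since every nonzero entry of the table carries a tilt derivation $D^{(\n)}$, never a shift $\partial_k$, in its derivation slot. So assume $w=\z^\mu\otimes D^{(\m)}$, $\mu\in\M$, $\m\in\N^d$. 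Reading the table, such a term can only arise from: (I) a product $(\z^{\gamma'}\otimes D^{(\n')})\btr(\z^{\gamma}\otimes D^{(\m)})=\z^{\gamma'}D^{(\n')}(\z^{\gamma})\otimes D^{(\m)}$ with $\z^\mu$ occurring in $\z^{\gamma'}D^{(\n')}(\z^{\gamma})$; (II) the first summand $\partial_i\z^\gamma\otimes D^{(\m)}$ of $(\1\otimes\partial_i)\btr(\z^{\gamma}\otimes D^{(\m)})$ with $\z^\mu$ occurring in $\partial_i\z^\gamma$; or (III) the second summand of $(\1\otimes\partial_i)\btr(\z^{\mu}\otimes D^{(\m+\e_i)})$, which forces $u=\1\otimes\partial_i$ and $v=\z^{\mu}\otimes D^{(\m+\e_i)}$.

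Contributions (III) and (II) are handled by hand: (III) contributes at most $d$ pairs, one per direction $i$; and in (II) the explicit expansion \eqref{eq:partial_i} shows that a monomial appearing in $\partial_i\z^\gamma$ equals $\z^\mu$ only if $\gamma$ is obtained from $\mu$ by an elementary shift ($\gamma=\mu-e_{k+1}+e_k-e_{\e_i}$ or $\gamma=\mu+e_\n-e_{\n+\e_i}$) along an index forced into the finite support of $\mu$, so there are finitely many admissible $\gamma$ and $d$ choices of $i$. For (I) I would separate $\n'=\0$ from $\n'\in\N^d_*$. When $\n'=\0$, \eqref{eq:D0} forces $\gamma'+\gamma=\mu-e_{k+1}+e_k$ for some $k\in\N$ with $\mu_{k+1}\geq1$; compact support of $\mu$ leaves finitely many such $k$, and for each the fixed multi-index $\gamma'+\gamma$ has only finitely many decompositions into a pair $(\gamma',\gamma)$, so this subcase is already finite.

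The remaining subcase of (I), $\n'\in\N^d_*$, is the heart of the matter and the step I expect to be the main obstacle. Here \eqref{eq:D0} forces $\gamma'+\gamma=\mu+e_{\n'}$ with $\gamma_{\n'}\geq 1$, and $\n'$ is a priori unbounded: taking $\gamma'=\mu$, $\gamma=e_{\n'}$ one has $(\z^\mu\otimes D^{(\n')})\btr(\z^{e_{\n'}}\otimes D^{(\m)})=\z^\mu\otimes D^{(\m)}=w$ for every $\n'\in\N^d_*$, so pure multi-index bookkeeping does not suffice. The resolution is to use the homogeneity grading that $L$ carries by construction (as in \cite{LOT,jacques2023post}): assigning $\z_\n$ degree $|\n|:=n_1+\dots+n_d$, $D^{(\n)}$ degree $-|\n|$, and $\z_k$, $D^{(\0)}$, $\partial_i$ the degrees that make every tilt and shift derivation a homogeneous operator on $\A$, one checks on \eqref{eq: post-lie product derivation} and \eqref{eq: multiplication table connection} that both $\tr$ and $\td$ preserve total degree, hence $\btr$ is homogeneous of degree $0$ and $\deg u+\deg v=\deg w$. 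Since the homogeneities occurring in $L$ are bounded below, say by $\delta_0$, and each homogeneous component is finite, $\deg u\geq\delta_0$ yields $\deg\z^\gamma=\deg v+|\m|\leq\deg\z^\mu-\delta_0$, leaving only finitely many admissible basis elements $v=\z^{\gamma}\otimes D^{(\m)}$; and for each such $\gamma$ the condition $\gamma_{\n'}\geq1$ forces $\n'\in\supp(\gamma)\cap\N^d_*$, a finite set, after which $\gamma'=\mu+e_{\n'}-\gamma$ is determined. Collecting the finitely many contributions (I)--(III) then proves the statement. The delicate point is precisely that the grading be at once degree-preserving for $\btr$ --- so that $\td$, which strictly lowers $|\n'|$, does not spoil homogeneity --- and locally finite and bounded below on $L$; everything else is elementary multi-index bookkeeping.
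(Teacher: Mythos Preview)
Your argument is correct in substance, but the route differs from the paper's. The paper simply splits $\la u\btr v,w\ra=\la u\tr v,w\ra+\la u\td v,w\ra$, invokes the already--established finiteness for $\tr$ on $L$ from \cite[Proposition~5.2 and Lemma~3.17]{jacques2023post}, and then observes from the table~\eqref{eq: multiplication table connection} that the only non-vanishing $\td$-product contributes at most $d$ pairs $(\1\otimes\partial_i,\ \z^\gamma\otimes D^{(\m+\e_i)})$. Your case~(III) is exactly that last step; your cases~(I)--(II) amount to reproving the cited $\tr$-result inline via homogeneity. So the paper is shorter by outsourcing the hard case, whereas your proof is self-contained and makes the underlying mechanism (degree additivity of $\btr$ together with the strict positivity $|\gamma|>|\n|$ built into $\B_L$) fully visible.

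One small imprecision worth tightening: you invoke a lower bound $\delta_0>0$ on the homogeneities of $\B_L$, but for generic $\alpha\in(0,1)$ the set $\{|\gamma|-|\n|:\z^\gamma\otimes D^{(\n)}\in\B_L\}\subset\alpha\N+\Z$ need not be bounded away from zero. Fortunately your argument does not actually require this: strict positivity of every degree in $\B_L$ already gives $\deg v<\deg w$, hence $|\gamma|<|\mu|$, and the finiteness of $\{\gamma\in\M:|\gamma|\le|\mu|\}$ (which the paper records just before~\eqref{eq: sub-post-Lie algebra L}) is all you use. Replacing the $\delta_0$-sentence by this observation makes the proof watertight.
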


\begin{proof}
We decompose:
$$\la u\btr v,w \ra=\la u\tr v,w \ra + \la u\td v,w \ra$$
Therefore, we have:
$$\la u\btr v,w \ra\neq 0 \Rightarrow \Big(\la u\tr v,w \ra\neq 0 ~\vee~ \la u\td v,w \ra\neq 0\Big)$$
It has been shown in \cite[Proposition 5.2. combined with Lemma 3.17.]{jacques2023post} that 
$$\{(u,v)\in \B_L\times\B_L: \la u\tr v,w \ra\neq 0\}$$ is finite. Therefore, it remains to prove the same finiteness property for $\td$:

For all $w\in\B_L$, the finiteness of the couples $(u,v)\in \B_L\times\B_L$ such that $\la u\td v,w \ra\neq 0$ is ensured by the multiplication table of $\td$ on $L$, since the only non-vanishing equality in eqref{eq: multiplication table connection} is:
\[(\1\otimes \partial_i) \td  (\z^\gamma \otimes D^{(\n)})=-n_i(\z^\gamma\otimes  D^{(\n-\e_i)}),\quad \n\geq \e_i\]
We obtain that for all $u,v,w\in\B_L$, $\la u\td v,w \ra$ is non-null if necessarily $w$ is of type $\z^\gamma\otimes D^\m$, with $\gamma\in\M$ and $\m\in\N^d$:
\[\la u\td v,w \ra\neq 0 \,\Longrightarrow \, w=\z^\gamma\otimes D^\m\]
Thus we only need to study this particular case and we have that:
$$\la u\td v,\z^\gamma\otimes D^\m\ra\neq 0 \iff \Big(u=\1\otimes \partial_i \Big) \wedge  \Big(v= \z^\gamma \otimes D^{(\m+\e_i)} \Big),\, i\in\{1,\ldots,d\}$$
Therefore we have proven that for all $w\in \B_L$ the set \[\{(u,v)\in \B_L\times\B_L: \la u\td v,w \ra\neq 0\}\] is finite.
\end{proof}

Denoting again $\la\cdot,\cdot\ra$ the pairing \eqref{eq:dualityL}, we define the linear map:
$\Delta_{\star_{\btr}}:\sym(L)\to\sym(L)\otimes\sym(L)$ given for all $v\in\sym(L)$ by:
\begin{equation}
 \Delta_{\star_{\btr}}v:=\sum_{u_1,u_2\in \B_{\sym(L)}} \la u_1\star_{\btr}  u_2,v\ra (Tu_1)\otimes (Tu_2)
\end{equation}

Since we consider a pre-Lie algebra $(L,\btr)$, which is a post-Lie algebra with null Lie bracket $(L,\btr,\0)$, \cite[Assumption 2.14]{jacques2023post} is satisfied and we can apply the \cite[Corollary 2.16]{jacques2023post} to our particular case where $\envU(L)=\sym(L)$ and $\star=\star_{\btr}$, which give us that for any $w\in\Bbar_{\sym(L)}$ the set \[\{(u,v)\in (\B_{\sym(L)})^2:\,\la u\star_{\btr} v,w \ra\neq 0\}\] is finite.

Then \cite[Proposition 2.17]{jacques2023post} gives us that the product $\star_{\btr}$ can be dualised into a coproduct $\Delta_{\star_{\btr}}$ as in the following Proposition:
\begin{prop} $(\sym(L),\ast,\Delta_{\star_{\btr}},{\mathds 1},\varepsilon)$ is a Hopf algebra, where the product $\ast$ has been defined by \eqref{eq: polynomial product symmetric alg} and the coproduct $\Delta_{\star_{\btr}}:\sym(L)\to\sym(L)\otimes\sym(L)$ is given for all $v\in\sym(L)$ by:
\begin{equation}
 \Delta_{\star_{\btr}}v:=\sum_{u_1,u_2\in \B_{\sym(L)}} \la u_1\star_{\btr}  u_2,v\ra (Tu_1)\otimes (Tu_2)
\end{equation}
Moreover the bialgebra structures $(\star_{\btr},\Delta_\ast)$ and $(\ast,\Delta_{\star_{\btr}})$ are dual relatively to the pairing \eqref{eq:dualityL}, in the sense that:
\begin{align*}
    \la w,u*v\ra &= \la \Delta_\ast w,u\otimes v\ra, \qquad \forall u,v,w\in \sym(L)\\
    \la u\star_{\btr} v,w \ra &=\la u \otimes v,\Delta_{\star_{\btr}}w \ra, \qquad \forall u,v,w\in \sym(L),
\end{align*}
\end{prop}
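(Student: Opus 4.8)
The plan is to obtain the statement as a direct instance of the dualisation result \cite[Proposition 2.17]{jacques2023post}, all of whose hypotheses are now in place. First I would record that, by Theorem \ref{theo: pre-Lie deformation RG}, $(L,\btr)$ is a pre-Lie algebra, i.e.\ a post-Lie algebra with vanishing bracket, so that $\env_{\lb\cdot,\cdot\rb}(L)=\sym(L)$ and the Guin--Oudom construction of Proposition \ref{prop: associative product} equips $\sym(L)$ with the bialgebra structure $(\star_{\btr},\Delta_\ast,\ind,\varepsilon)$. The pairing \eqref{eq:dualityL} between the twisted basis $\B_{\sym(L)}$ and the monomial basis $\Bbar_{\sym(L)}$ already makes the polynomial product $\ast$ of \eqref{eq: polynomial product symmetric alg} dual to $\Delta_\ast$, which is exactly the first claimed identity $\la w,u\ast v\ra=\la\Delta_\ast w,u\otimes v\ra$; this part requires nothing new.

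Next I would check the finiteness input needed to dualise $\star_{\btr}$. For every $w\in\B_L$ the set $\{(u,v)\in\B_L\times\B_L:\la u\btr v,w\ra\neq0\}$ is finite --- this is Proposition \ref{prop: finiteness btr} --- so \cite[Assumption 2.14]{jacques2023post} holds, and feeding it into \cite[Corollary 2.16]{jacques2023post} (applied with $\envU(L)=\sym(L)$, $\star=\star_{\btr}$) upgrades it to the monomial level: $\{(u_1,u_2)\in\B_{\sym(L)}\times\B_{\sym(L)}:\la u_1\star_{\btr}u_2,w\ra\neq0\}$ is finite for every $w\in\Bbar_{\sym(L)}$. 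Hence the sum defining $\Delta_{\star_{\btr}}v$ has only finitely many nonzero terms for each basis element $v$, so $\Delta_{\star_{\btr}}$ is a well-defined linear map $\sym(L)\to\sym(L)\otimes\sym(L)$ into the uncompleted tensor product, given by the displayed formula.

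With these two ingredients, \cite[Proposition 2.17]{jacques2023post} applies verbatim and yields both remaining assertions at once: $(\sym(L),\ast,\Delta_{\star_{\btr}},\ind,\varepsilon)$ is a Hopf algebra, and $\la u\star_{\btr}v,w\ra=\la u\otimes v,\Delta_{\star_{\btr}}w\ra$ for all $u,v,w\in\sym(L)$ --- coassociativity and counitality of $\Delta_{\star_{\btr}}$ and its compatibility with $\ast$ being obtained by transposing, along the pairing \eqref{eq:dualityL}, the corresponding properties of $(\star_{\btr},\Delta_\ast,\ind)$, and the antipode coming from connectedness of the resulting graded bialgebra. I do not anticipate any genuine obstacle here: the only substantive step is the finiteness verification already performed in Proposition \ref{prop: finiteness btr}, and the single subtle point is that $\Delta_{\star_{\btr}}$ must land in $\sym(L)\otimes\sym(L)$ and not in a completion thereof --- precisely what \cite[Corollary 2.16]{jacques2023post} guarantees.
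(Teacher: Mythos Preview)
Your proposal is correct and follows essentially the same route as the paper: the paper does not give a separate proof of this proposition but derives it in the preceding paragraph by noting that $(L,\btr)$ is pre-Lie (so $\env_{\lb\cdot,\cdot\rb}(L)=\sym(L)$), invoking Proposition~\ref{prop: finiteness btr} to satisfy \cite[Assumption 2.14]{jacques2023post}, applying \cite[Corollary 2.16]{jacques2023post} for the monomial-level finiteness, and then quoting \cite[Proposition 2.17]{jacques2023post} for the Hopf algebra structure and duality. Your write-up reproduces exactly this chain of citations.
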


\subsection{Homogeneity}
In \cite{jacques2023post} a post-Lie algebra $(L,\tr,[\cdot,\cdot])$ is used to construct the structure group of semi-linear SPDEs, considering in particular the equation on $\R^d$:
\begin{equation}\label{eq:spde}
    -\Delta u = a(u)\xi
\end{equation}
where for a fixed $\alpha\in\,]0,1[$:
\begin{itemize}
    \item $u$ is a Hölder function in $C^\alpha(\R^d)$.
    \item $\Delta$ denotes the $d$-dimensional Laplacian operator: $\Delta u= \partial_1^2 u +\ldots+ \partial_d^2 u.$
    \item $\xi$ is a distribution in some Besov space $C^{\alpha-2}$
\end{itemize}

The \emph{homogeneity} is the function $|\cdot|:\M\to[\alpha,+\infty)$ defined as follows:
\begin{equation}\label{eq:|}
|\beta|:=\alpha\sum_{k\geq 0}\beta_k+\sum_{\n\ne \0}|\n|\beta_\n.
\end{equation}
where we denote $|\n|:=|(n_1,\ldots,n_d)|=n_1+\ldots+n_d$ for all $\n\in\N^d$. We remark in particular that the homogeneity is an increasing function:
\[\gamma < \beta \Longrightarrow |\gamma|<|\beta|\]
and that for all $\kappa>0$, the set of multiindices of homogeneity $\kappa$
\[\M_{\kappa}:=\{\beta\in \M,~|\beta|\leq \kappa\}\]
is finite.
We recall the definition \eqref{eq: post-Lie algebra L_0} of $L_0$ and we define the subspace $L\subset L_0$ as in \cite[\S 5.1]{jacques2023post}:
\begin{equation}\label{eq: sub-post-Lie algebra L}
L:=\R\{\1\otimes\partial_i\}_{i\in\{1,\ldots,d\}} \oplus \R\left\{\z^\gamma \otimes D^{(\n)}\right\}_{\gamma\in\M,\, \n\in\N^d, |\gamma|>|\n|}.
\end{equation}

\begin{theorem}[\cite{jacques2023post} Theorem 5.1.]\label{theo: post-Lie algebra structure for L}
    The space $L$ is a sub post-Lie algebra of $\A\otimes \Der(\A)$, for the canonical post-Lie algebra structure $(\tr,[\cdot,\cdot])$ given in Theorem \ref{theo: post-Lie structure from derivations}.
\end{theorem}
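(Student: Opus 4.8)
The plan is to deduce the statement from the fact, already established in Theorem \ref{thm:postLOT}, that $L_0$ is a sub post-Lie algebra of $\A\otimes\Der(\A)$. Comparing \eqref{eq: post-Lie algebra L_0} with \eqref{eq: sub-post-Lie algebra L}, the basis $\B_L=\{\1\otimes\partial_i\}_i\cup\{\z^\gamma\otimes D^{(\n)}:|\gamma|>|\n|\}$ of $L$ is a subset of the basis of $L_0$, so $L$ is a linear subspace of $L_0$; hence it only remains to check that $L$ is stable under the two bilinear operations $\tr$ and $[\cdot,\cdot]$ of Theorem \ref{theo: post-Lie structure from derivations}, and by bilinearity it suffices to verify this on pairs of elements of $\B_L$.

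The single ingredient I would isolate first is a grading lemma: with respect to the homogeneity $|\cdot|$ of \eqref{eq:|}, the derivation $D^{(\n)}$ is homogeneous of degree $-|\n|$, i.e.\ $D^{(\n)}\z^\gamma$ is a linear combination of monomials $\z^{\gamma'}$ with $|\gamma'|=|\gamma|-|\n|$, while $\partial_i$ raises $|\cdot|$ by exactly $1$. This follows directly from the explicit formulas \eqref{eq:D0} and \eqref{eq:partial_i} together with $|e_k|=\alpha$ for $k\in\N$, $|e_\n|=|\n|$ for $\n\in\N^d_*$, and $|e_{\n+\e_i}|=|e_\n|+1$; in particular $D^{(\0)}$ preserves $|\cdot|$, the point being precisely the cancellation $|e_{k+1}|=|e_k|=\alpha$ in the formula for $D^{(\0)}\z^\gamma$.

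With the grading lemma in hand I would run through the finitely many cases. For the bracket: every commutator of two generators vanishes except $[\partial_i,D^{(\n)}]_\circ=-n_iD^{(\n-\e_i)}$, so all brackets of elements of $\B_L$ are zero except $[\1\otimes\partial_i,\z^\gamma\otimes D^{(\n)}]=-n_i(\z^\gamma\otimes D^{(\n-\e_i)})$, which lies in $L$ because $|\gamma|>|\n|>|\n|-1=|\n-\e_i|$ when $n_i\neq0$ (and the bracket vanishes otherwise). For $\tr$: any product with $\1\otimes\partial_j$ in the right slot vanishes since any derivation kills $\1$; next $(\1\otimes\partial_i)\tr(\z^\gamma\otimes D^{(\n)})=\partial_i\z^\gamma\otimes D^{(\n)}$ is a combination of terms $\z^{\gamma'}\otimes D^{(\n)}$ with $|\gamma'|=|\gamma|+1>|\n|$, hence in $L$; and $(\z^{\gamma_1}\otimes D^{(\n_1)})\tr(\z^{\gamma_2}\otimes D^{(\n_2)})=\z^{\gamma_1}D^{(\n_1)}\z^{\gamma_2}\otimes D^{(\n_2)}$ is a combination of terms $\z^{\gamma_1+\gamma'}\otimes D^{(\n_2)}$ with $|\gamma_1+\gamma'|=|\gamma_1|+|\gamma_2|-|\n_1|>|\gamma_2|>|\n_2|$, using the defining inequality $|\gamma_1|>|\n_1|$. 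In every case the output is a combination of elements of $\B_L$, so $L$ is stable under both operations and is therefore a sub post-Lie algebra.

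There is no genuine obstacle here: the content of the statement is exactly that the defining inequality $|\gamma|>|\n|$ is preserved — in fact strictly improved — by each generator-level operation, and the only point requiring care is the homogeneity bookkeeping in the grading lemma, above all the fact that $D^{(\0)}$ is homogeneous of degree $0$ rather than, say, increasing homogeneity.
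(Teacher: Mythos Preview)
Your proof is correct. The paper does not actually prove this statement: it is stated as a citation of \cite[Theorem 5.1]{jacques2023post} with no argument given, and the surrounding text simply takes it as input for the subsequent theorem. Your approach---reducing to stability of $L$ inside $L_0$ under $\tr$ and $[\cdot,\cdot]$, and verifying this via the homogeneity grading---is exactly the intended one; indeed the paper carries out the identical homogeneity bookkeeping ($|e_{k+1}-e_k|=0$, $|e_\n|=|\n|$, $\partial_i$ raises $|\cdot|$ by $1$, $D^{(\n)}$ lowers it by $|\n|$) a few paragraphs later in the Homogeneity subsection, and uses the same inequality $|\gamma|>|\n|>|\n-\e_i|$ when checking stability of $L$ under $\td$ in the proof of the theorem that follows.
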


The following Theorem is the principal result of our present section. It shows that one can avoid the use of post-Lie structure in \cite{jacques2023post} by defining a pre-Lie structure on $L$:

\begin{theorem} Let $\td$ the bilinear operation on $L$ previously defined by \eqref{eq: multiplication table connection}. $(L,\btr)$, with $\btr:=\tr+\td$ is a pre-Lie algebra and there is an isomophism of Hopf algebras:
    \[(\envU(L),\star_\tr,\Delta_\ast)\xrightarrow{\sim}(\sym(L),\star_{\btr},\Delta_\ast)\]
    between the associative Hopf algebras associated to the post-Lie algebra $(L,\tr,[\cdot,\cdot])$ of \cite{jacques2023post} and the one associated to the pre-Lie algebra $(L,\btr)$.
\end{theorem}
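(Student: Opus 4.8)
The plan is to deduce the statement from two results already in place: Theorem~\ref{theo: pre-Lie deformation RG}, which gives that $(\A\otimes\D,\tr+\td)$ is a pre-Lie algebra and that $(L_0,\tr+\td)$ is a sub-pre-Lie algebra of it, and the last assertion of Proposition~\ref{prop: pre-Lie deformation}, which produces the Hopf-algebra isomorphism $\Phi_\tr^{-1}\circ\Phi_\btr$. Comparing \eqref{eq: sub-post-Lie algebra L} with \eqref{eq: post-Lie algebra L_0} one has $L\subseteq L_0$, so the whole argument reduces to (a) checking that $L$ is stable under $\btr=\tr+\td$, and (b) identifying, on $L$, the commutator $[\cdot,\cdot]_{\diamond}$ of $\td$ with the post-Lie bracket $[\cdot,\cdot]$.

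For (a): $L$ is stable under $\tr$ by Theorem~\ref{theo: post-Lie algebra structure for L}, and it is stable under $\td$ because, on the basis elements of $L$, the multiplication table \eqref{eq: multiplication table connection} shows that the only product that can be non-zero is $(\1\otimes\partial_i)\td(\z^\gamma\otimes D^{(\n)})=-n_i\,(\z^\gamma\otimes D^{(\n-\e_i)})$, and the defining constraint $|\gamma|>|\n|$ of $L$ forces $|\gamma|>|\n|>|\n-\e_i|$ (when $n_i=0$ the product vanishes), so the result still belongs to $L$. Hence $(L,\btr)$ is a sub-pre-Lie algebra of $(L_0,\btr)$; in particular $\td$ restricts to a pre-Lie product on $L$, since the lift to $\A\otimes\D$ of the pre-Lie product of $(\D,\td)$ of Subsection~\ref{subsec: Pre-Lie regularity structures} has associator $a_1a_2a_3\otimes(\text{associator on }\D)$ — symmetric in the first two arguments by commutativity of $\A$ and the pre-Lie property on $\D$ — and $L$ is stable under it. For (b): on $\A\otimes\D$ the commutator of $\td$ is $(a_1\otimes D_1)\td(a_2\otimes D_2)-(a_2\otimes D_2)\td(a_1\otimes D_1)=a_1a_2\otimes[D_1,D_2]_{\diamond}=a_1a_2\otimes[D_1,D_2]_\circ$, using $[\cdot,\cdot]_{\diamond}=[\cdot,\cdot]_\circ$ on $\D$ from Subsection~\ref{subsec: Pre-Lie regularity structures}, so $[\cdot,\cdot]_{\diamond}$ coincides with $[\cdot,\cdot]$ on $L$.

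With these verifications, the three hypotheses of Proposition~\ref{prop: pre-Lie deformation} hold on $L$: $\td|_L$ is pre-Lie by (a); $x\tr\cdot\in\Der(L,\td)$ for all $x\in L$, inherited from the identity $x\tr(y\td z)=(x\tr y)\td z+y\td(x\tr z)$ on $\A\otimes\Der(\A)$ proved within the proof of Theorem~\ref{th: post-Lie conditions on A tens Der(A)}; and $(L,\tr,[\cdot,\cdot]_{\diamond})=(L,\tr,[\cdot,\cdot])$ is a post-Lie algebra by Theorem~\ref{theo: post-Lie algebra structure for L}. Proposition~\ref{prop: pre-Lie deformation} then yields that $(L,\btr)$ is pre-Lie and that $\Phi_\tr^{-1}\circ\Phi_\btr:(\env_{[\cdot,\cdot]_{\diamond}}(L),\star_\tr,\Delta_\ast)\xrightarrow{\ \sim\ }(\sym(L),\star_\btr,\Delta_\ast)$ is an isomorphism of Hopf algebras, which by (b) is precisely $(\envU(L),\star_\tr,\Delta_\ast)\xrightarrow{\ \sim\ }(\sym(L),\star_\btr,\Delta_\ast)$, the claimed isomorphism between the associative Hopf algebra of the post-Lie algebra $(L,\tr,[\cdot,\cdot])$ of \cite{jacques2023post} and that of the pre-Lie algebra $(L,\btr)$. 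The only step that is more than bookkeeping is the homogeneity estimate of (a) ensuring $L$ is closed under $\td$; once that is in hand the theorem is an immediate assembly of the machinery of Sections~\ref{sec: torsion and curvature} and~\ref{sec: deformation post-Lie alg of derivations}.
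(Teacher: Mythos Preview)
Your proof is correct and follows essentially the same approach as the paper: the crux in both is the homogeneity check that $L$ is stable under $\td$, via the only non-zero entry of the multiplication table \eqref{eq: multiplication table connection} and the inequality $|\gamma|>|\n|>|\n-\e_i|$. The paper's proof is terser---it only spells out the stability under $\td$ and leaves the pre-Lie structure and the Hopf-algebra isomorphism implicit in the machinery of Theorem~\ref{theo: pre-Lie deformation RG} and Proposition~\ref{prop: pre-Lie deformation}---whereas you make the invocation of Proposition~\ref{prop: pre-Lie deformation} explicit, including the identification $[\cdot,\cdot]_{\diamond}=[\cdot,\cdot]$ needed to match $\env_{[\cdot,\cdot]_{\diamond}}(L)$ with $\envU(L)$.
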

\begin{proof}
    We consider the linear basis $\B_L$ of $L$ given by:
    \begin{equation}\label{eq: B_L}
        \B_L:=\{\1\otimes\partial_i\}_{i\in\{1,\ldots,d\}} \cup \left\{\z^\gamma \otimes D^{(\n)}\right\}_{\gamma\in\M,\, \n\in\N^d, |\gamma|>|\n|},
    \end{equation}
    using \cite[Theorem 5.1.]{jacques2023post}, it is sufficient to prove the stability of $L$ by $\td$. In the multiplication table of $\td$ on $L_0$, the only non-null equality is: 
    \[(\1\otimes \partial_i)\td(\z^\gamma\otimes D^{(\n)})=\z^\gamma\otimes (\partial_i\td D^{(\n)})=-n_i(\z^\gamma\otimes  D^{(\n-\e_i)})\]
    for all $\gamma\in \M,~ \n\in \N^d$. Thus if $|\gamma|>|\n|$ we have that $|\gamma|>|\n|>|\n-\e_i|$ since $|\n-\e_i|=|\n|-1$, which proves that $\z^\gamma\otimes  D^{(\n-\e_i)}\in L$.
\end{proof}

We extend the notion of homogeneity by defining it on $\B_\A$. We denote again $|\cdot|:\B_\A\to [0,+\infty[$ the map given by:
\[|\1|=0, \qquad \forall \gamma\in\M:\quad |\z^\gamma|=|\gamma|\]
We set $A:=\alpha\N+\N=\{\alpha i+j: i,j\in\N\}$. By \eqref{eq:|} the homogeneity $|\beta|$ of $\beta\in\M$
takes values in $A$. We define the following grading on $A$:

\[\A=\bigoplus_{\kappa\in A} \A_\kappa, \qquad \A_0=\R\cdot\{\1\},\quad\A_\kappa=\R\cdot \{z^\gamma\}_{\gamma\in\M_\kappa}.\]

The algebra $(\A,\cdot)$ is a graded algebra, that is to say:
\[\A_{\kappa}\cdot\A_{\kappa'}\in \A_{\kappa+\kappa'}\]

We define also an homogeneity on the derivations $|\cdot|:\B_\D\to \Z$ which is given by:
\[|\partial_i|=1,\qquad |D^{(\n)}|=-|\n|\leq 0\]

Recalling the defining equalities \eqref{eq:D0} and \eqref{eq:partial_i}, we have:
$$D^{(\n)}\z^{\gamma}\in\left\{ 
    \begin{array}{ll} \R\{\z^{\gamma+e_{k+1}-e_k}\}_{k\geq 0}\quad \text{if $\n=\0$},
\\ \\ \R\{\z^{\gamma-e_\n}\} \quad \text{if $\n\ne\0$},
    \end{array}
    \right.$$
and
$$\partial_i\z^\gamma\in \R\{\z^{\gamma+e_{k+1}-e_k+e_{\e_i}}\}_{k\geq 0}\oplus \R\{\z^{\gamma-e_\n+e_{\n+\e_i}}\}_{\n\in\N^d_*}$$

Computing the homogeneity using additivity and that $|e_{k+1}-e_k|=0$ and $|e_\n|=|\n|$ (in particular $|e_{\e_i}|=1$ and $|e_{\n+\e_i}|=|\n|+1$), we obtain:
\[
\begin{split}
& |\gamma+e_{k+1}-e_k|=|\gamma|+|e_{k+1}-e_k|=|\gamma|=|\gamma|+|D^{(\0)}|,
\\ & |\gamma-e_{\n}|=|\gamma|-|\n|=|\gamma|+|D^{(\n)}|,
\\ & |\gamma+e_{k+1}-e_k+e_{\e_i}|=|\gamma|+|e_{k+1}-e_k|+|e_{\e_i}|=|\gamma|+1=|\gamma|+ |\partial_i|
\\ & |\gamma-e_\n+e_{\n+\e_i}|=|\gamma|-|\n| + (|\n| +1)=|\gamma|+1=|\gamma|+ |\partial_i|,
\end{split}
\]

Thus, for all $D\in\B_\D$, we have that $D(\A_\kappa)\subset \A_{\kappa + |D|}$ where if $\kappa + |D|<0$ we denote $\A_{\kappa + |D|}:=\{0\}$.
By the multiplication table \eqref{eq: multiplication table} of $\td$, we have that:
\[|D\td D'|=|D|+|D'|\]
Thus 
\[\Psi_{\diamond}[D,D'](\z^\gamma)=D\circ D'(\z^\gamma) - D\td D'(\z^\gamma)\in \A_{|\gamma|+|D|+|D'|}\]
Then, using the defining equality \eqref{eq: Psi_diamond} it is easy to prove by induction on $n$ that for all $D_1,\ldots,D_n\in \B_\D$:
\begin{equation}\label{eq: homogeneity Psi}
    \Psi_{\diamond}[D_1\cdots D_n](\z^\gamma)\in\A_{|\gamma|+|D_1|+\ldots+|D_n|}
\end{equation}
where we denoted again $\A_{|\gamma|+|D_1|+\ldots+|D_n|}:=\{0\}$ if $|\gamma|+|D_1|+\ldots+|D_n|<0$.

\subsection{Structure group and recentering maps.}
In the context of the structure group in regularity structures, the following Proposition is the key for the construction the recentering maps $\Gamma$ in regularity structures as made in \cite[Section 5]{jacques2023post}. \\
We recall the representation morphism $\rhobar_{\btr}:(\sym(L),\star_{\btr})\to (\End(\A),\circ)$ given by \eqref{eq: representation deformed algebra derivation}. The following proposition is a version of \cite[Proposition 5.2.]{jacques2023post} for $(L,\btr)$, with $\btr:=\tr+\td$:

\begin{prop}\label{prop: finiteness representation} 
For all $\beta\in\M$ the set:
\begin{equation*}
    \left\{(u,\z^\gamma)\in \B_{\sym(L)}\times \B_\A,~\la \rhobar_{\btr}(u)(\z^\gamma),\z^\beta\ra\ne 0\right\},
\end{equation*}
is finite, where $B_{\sym(L)}$ is the monomial basis of $L$ given by \eqref{eq: monomial basis}, $\B_\A=\{\z^\gamma\}_{\gamma\in\M}$ is the monomial basis of $\A$ and $\la\cdot,\cdot\ra:\A^{\otimes 2}\to \A$ is the canonical pairing relative to $\B_\A$.
\end{prop}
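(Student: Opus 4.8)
The plan is to combine two finiteness mechanisms already available in the section: the grading of $\A$ by homogeneity, which will pin down $\gamma$ and bound the number of shift-type factors $\1\otimes\partial_i$ occurring in $u$, and componentwise comparison of multi-indices in $\M$, which will bound the number and the shape of the derivation-type factors $\z^{\gamma'}\otimes D^{(\n)}$ in $u$.

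First I would record how $\rhobar_{\btr}$ shifts homogeneity. Write a basis element $u\in\B_{\sym(L)}$ (up to its scalar symmetry factor, which is irrelevant here) as $u=(a_1\otimes D_1)\cdots(a_n\otimes D_n)$ with each $a_i\otimes D_i\in\B_L$. By Theorem \ref{theo: deformed representation}, $\rhobar_{\btr}(u)(\z^\gamma)=a_1\cdots a_n\cdot\Psi_{\diamond}[D_1\cdots D_n](\z^\gamma)$, and \eqref{eq: homogeneity Psi} together with the gradedness of $\A$ gives $\rhobar_{\btr}(u)(\z^\gamma)\in\A_{|\gamma|+h(u)}$, where $h(u):=\sum_{i=1}^n(|a_i|+|D_i|)$. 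Each generator of $L$ contributes strictly positively here: $|\1|+|\partial_i|=1$, while $|\z^{\gamma'}|+|D^{(\n)}|=|\gamma'|-|\n|>0$ by the defining inequality $|\gamma'|>|\n|$ of $L$ in \eqref{eq: sub-post-Lie algebra L}. Hence $\la\rhobar_{\btr}(u)(\z^\gamma),\z^\beta\ra\ne0$ forces $|\gamma|+h(u)=|\beta|$. In particular $|\gamma|\le|\beta|$, so $\gamma$ lies in the finite set $\M_{|\beta|}$; and the number $p$ of factors of $u$ of the form $\1\otimes\partial_i$ satisfies $p\le h(u)\le|\beta|$, so, there being only $d$ such generators, only finitely many shift parts of $u$ are possible.

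Next I would bound the derivation part. Since $a_1\cdots a_n=\z^{\gamma''}$ with $\gamma'':=\sum_i\gamma'_i$ summed over the derivation-type factors $\z^{\gamma'_i}\otimes D^{(\n_i)}$ of $u$, and multiplication by $\z^{\gamma''}$ in $\A$ merely translates multi-indices, $\la\rhobar_{\btr}(u)(\z^\gamma),\z^\beta\ra\ne0$ forces $\gamma''\le\beta$ componentwise. By monotonicity and additivity of $|\cdot|$ this gives $\sum_i|\gamma'_i|=|\gamma''|\le|\beta|$. Every nonzero multi-index has homogeneity at least $\alpha$ (a single $\z_k$ contributes $\alpha$; a single $\z_\n$ with $\n\ne\0$ contributes $|\n|\ge1>\alpha$), and each $\gamma'_i$ is nonzero since $|\gamma'_i|>|\n_i|\ge0$; hence the number of derivation-type factors of $u$ is at most $|\beta|/\alpha$. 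Moreover $\gamma'_i\le\gamma''\le\beta$ puts $\gamma'_i$ in the finite set $\M_{|\beta|}$, and then $|\n_i|<|\gamma'_i|\le|\beta|$ confines $\n_i$ to a finite set; so only finitely many multisets of derivation-type factors can occur. Combining this with the previous paragraph (and treating the degenerate case $u=\ind$, which forces $\gamma=\beta$, separately), every $u$ appearing in the set is a product of boundedly many generators taken from an explicitly finite list, so there are finitely many such $u$, each paired with $\gamma$ in the finite set $\M_{|\beta|}$; this proves finiteness.

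The argument is essentially bookkeeping once \eqref{eq: homogeneity Psi} and the grading of $\A$ are in hand; the one quantitative point that genuinely needs care is the bound on the number of derivation-type factors, where positivity of $\alpha$ is used in an essential way: without it the homogeneity increment $|\gamma'|-|\n|$ of a single such factor could be arbitrarily small and infinitely many of them could accumulate while keeping $h(u)$ bounded.
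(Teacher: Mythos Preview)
Your proof is correct and follows essentially the same approach as the paper: both arguments factor $\rhobar_{\btr}(u)(\z^\gamma)=\z^{\gamma''}\cdot\Psi_{\diamond}[\cdots](\z^\gamma)$, use the componentwise inequality $\gamma''\le\beta$ (which you make more explicit than the paper does) together with $|\gamma'_i|\ge\alpha$ to bound the number and shape of the derivation-type factors, and then use the homogeneity equation from \eqref{eq: homogeneity Psi} to bound $|\gamma|$ and the number of shift factors. The only cosmetic difference is the order in which the two parts are handled.
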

\begin{proof}
For all $u=\prod_{i= 1}^d (\1\otimes \partial_i)^{m_i}(\z^{\gamma_1}\otimes D^{\n_1})\cdots (\z^{\gamma_\ell}\otimes D^{\n_\ell})\in \Bbar_{\sym(L)}$ where $(m_1,\ldots,m_d)\in \N^d$ and $(\gamma_i, \n_i)\in \M\times \N^d$ for all $i\in\{1,\ldots,\ell\}$, we have that:
\[\rhobar_{\btr}(u)(\z^\gamma)=\z^{\gamma_1+\ldots +\gamma_\ell}\cdot\Psi_\diamond\left[\prod_{i= 1}^d \partial_i^{m_i}\cdot D^{\n_1}\cdots D^{\n_\ell}\right](\z^\gamma)\]
Thus, if we fix $\beta\in\M$:
\[\la \rhobar_{\btr}(u)(\z^\gamma),\z^\beta\ra\ne 0 \Rightarrow \ell \alpha\leq |\gamma_1|+\ldots +|\gamma_\ell|\leq|\beta|\]
and since $\alpha >0$, we have the boundedness $\ell\leq \lfloor \frac{|\beta|}{\alpha}\rfloor$ and using that the set $\M_{\leq \kappa}:=\bigcup_{\kappa'\leq \kappa} \M_\kappa$ is finite, there is a finite choice of $\gamma_i\in\M_{\leq |\beta|}$ for all $i\in\{1,\ldots,\ell\}$ and hence by the condition $|\n_i|\leq |\gamma_i|$, there is also a finite possible choice for the $\n_i$'s. Finally, using \eqref{eq: homogeneity Psi}, we have that:
\[\Psi_\diamond\left[\prod_{i= 1}^d \partial_i^{m_i}\cdot D^{\n_1}\cdots D^{\n_\ell}\right](\z^\gamma)\in \A_{|\gamma|+\sum_{i=1}^d m_i - \n_1-\ldots-\n_\ell}\]
and again if $\la \rhobar_{\btr}(u)(\z^\gamma),\z^\beta\ra\ne 0$ we obtain the equality:
\[|\gamma|+\sum_{i=1}^d m_i - \n_1-\ldots-\n_\ell +|\gamma_1|+\ldots +|\gamma_\ell|=|\beta| \]
which shows that both $|\gamma|$ and $\sum_{i=1}^d m_i$ are bounded and hence there exists a finite choice for $\gamma\in\A_{\leq |\beta|+\sum_{k=1}^\ell (|\gamma_k|-|\n_k|)}$ and for $m_i\in \N$.
\end{proof}

As in \cite[Definition 2.19.]{jacques2023post}, $G$ is defined as the \textbf{character group} of the symmetric algebra $(\sym(L),\ast,\ind)$. That is to say, denoting $\sym(L)^*$ the dual space of linear maps $\sym(L)\to \R$, we have that $G$ is given as:
$$G:=\{f\in \sym(L)^*,~f(\ind)=1~\wedge~f(u \ast v)=f(u)f(v)~ \forall u,v\in L\}$$
We also recall from \cite[Proposition 2.20.]{jacques2023post} that the group structure on $G$ is given by: $(\star_{\btr} ,\ind^*)$, where the unit element is given by duality as $\mathds{1}^*(\cdot):=\la \mathds{1},\cdot\ra$, and the product is given by:
\begin{equation}\label{eq:convprod}
f_1\star_{\btr}  f_2  := m_\R(f_1\otimes f_2)\Delta_{\star_{\btr}}  .
\end{equation}
where $m_\R$ denotes the multiplication in $\R$.\\

We mention here that by the finiteness property of Proposition \ref{prop: finiteness representation}, the map:
\[\writefun{\rhobar_{\btr}}{\sym(L)\otimes \A}{\A}{u \bm{\otimes} \z^\beta}{\rhobar_{\btr}(u)(\z^\beta)}\]
can be dualised, via the canonical pairing $\la\cdot,\cdot\ra:\A^{\otimes 2}\to \A$ relative to $\B_\A$ into a left comodule map, the map $\rhobar_{\btr}^\ast:\A\to \sym(L)\otimes \A$ defined by:
\[\rhobar_{\btr}^\ast (\z^\gamma) :=\sum_{\beta\in\M}\sum_{u\in\B_{\sym(L)}}\la\rhobar(u)(\z^\beta),\z^\gamma\ra 
(u \otimesbold \z^\beta)\]
which is a coaction map, that is to say $(\A,\rhobar_{\btr}^\ast)$ is a left $(\sym(L),\Delta_{\star_{\btr}})$-comodule, that is to say:
\[(\id \otimesbold \rhobar_{\btr}^\ast)\rhobar_{\btr}^\ast =(\Delta_{\star_{\tr}}\otimesbold \id)\rhobar_{\btr}^\ast\]
This last equality is easily verifyed using the fact that for all $u_1,u_2\in $ equality \eqref{eq: representation algebra derivation2} as in \cite[Proposition 3.21]{jacques2023post}.\\

Similar to the construction in regularity structures \cite[equality 8.17]{hairer2013solving}, we can define recentering maps $\Gamma^{\btr}_f$ for all $f\in \sym(L)^*$ as:
\[\Gamma^{\btr}_f(\z^\gamma):=(f\otimesbold \id)\rhobar_{\btr}^\ast(\z^\gamma)\]
Then, observing that for all $f_1,f_2\in \sym(L)^\ast$:
\[\Gamma^{\btr}_{f_2}\circ \Gamma^{\btr}_{f_1}(\z^\gamma)=(f_1\otimesbold f_2\otimesbold \id)(\id \otimesbold \rhobar_{\btr}^\ast)\rhobar_{\btr}^\ast \z^\gamma\]
and that:
\[\Gamma^{\btr}_{f_1\star_{\btr}  f_2}(\z^\gamma)=(f_1\otimesbold f_2\otimesbold \id)(\Delta_{\star_{\tr}}\otimesbold \id)\rhobar_{\btr}^\ast \z^\gamma\]
where $f_1\otimesbold f_2$ is considered as being an element of $(\sym(L)\otimesbold\sym(L))^*$ which is defined for all $u_1,u_2\in \sym(L)$ as:
\[(f_1\otimesbold f_2)(u_1\otimesbold u_2)=f_1(u_1)f_2(u_2),\]
we conclude with the composition formula for recentering maps:
\[\Gamma^{\btr}_{f_1\star_{\btr}  f_2}=\Gamma^{\btr}_{f_2}\circ \Gamma^{\btr}_{f_1}\]

\medskip
\section{Appendix}\label{sec: Appendix}
\subsection{Proof of the Lemmas.}\label{subsec: Appendix proofs}
We start to write here below the proof of our algebraic first Bianchi identity \ref{lem: the first Bianchi identity}, although this proof has already been writen in \cite[Chapter III,$\S$6, Proposition 1]{nomizu1956lie} for the classical geometric setting where $(L,[\cdot,\cdot])$ is the Lie algebra of smooth sections of fiber bundles. However the proof of \cite{nomizu1956lie} does not requires any geometric considerations and can be transcribed as originally written. We write below the proof, for the seek of completnes, since we adopt a general algebraic point of view and since references about Bianchi identities in the general setting of non-vanishing torsion are difficult to find and quite old.

\begin{proof}[Proof of \ref{lem: the first Bianchi identity} ]
    Let $x,y,z\in L$, for simplicity, we denote here $\tors{}$ for $\tors{\td,[\cdot,\cdot]}$, $\curv{}$ for $\curv{\td,[\cdot,\cdot]}$.
    As given in \eqref{eq: torsion}, the torsion is defined by:
    $$\tors{}(x,y):=x\td y-y\td x-[x,y]$$
    We apply the operator $\tors{}(\cdot,z)=-\tors{}(z,\cdot)\in\End(L)$ to both sides of the last equality, which amounts to:
    $$\tors{}(\tors{}(x,y),z)=\tors{}(x\td y,z)+\tors{}(z,y\td x)+\tors{}(z,[x,y])$$
    Summing boths sides of the last equality over ciclic permutations of $(x,y,z)$, we obtain:
    \begin{align*}
        \mathfrak{S}(\tors{}(\tors{}(x,y),z))&=\mathfrak{S}\Big(\tors{}(x\td y, z) + \tors{}(z, y\td x) +\tors{}(z,[x, y])\Big)\\
        &=\mathfrak{S}(\tors{}(x\td y, z)) + \underbrace{\mathfrak{S}(\tors{}(z, y\td x))}_{\mathfrak{S}(\tors{}(y, x\td z))} + \underbrace{\mathfrak{S}(\tors{}(z,[x, y]))}_{\mathfrak{S}(\tors{}(x,[y, z]))}\\
        &=\mathfrak{S}\Big(\underbrace{\tors{}(x\td y, z)+\tors{}(y, x\td z)}_{x\td\tors{}(y,z)-(x\td \tors{})(y,z)}+\tors{}(x,[y, z])\Big)
    \end{align*}
    Then, using the definition of $\tors{}(\cdot,\cdot)=[\cdot,\cdot]_{\diamond}-[\cdot,\cdot]$, we compute:
    \begin{align*}
        x\td \tors{}(y, z)+&\tors{}(x,[y, z])\\
        &=x\td [y,z]_{\diamond}-x\td [y,z]+[x,[y, z]]_{\diamond}-[x,[y, z]]\\
        &=x\td [y,z]_{\diamond}-[y, z]\td x-[x,[y, z]]
    \end{align*}
    Then, using the linearity of $\mathfrak{S}$ and the fact that $[\cdot,\cdot]$ satisfyes the Jacobi identity $\mathfrak{S}([x,[y, z]])=0$, we obtain that:  
    
    \begin{align*}
        &\mathfrak{S}(\tors{}(\tors{}(x,y),z))\\
        &= \mathfrak{S}(x\td (y\td z)) -\underbrace{\mathfrak{S}(x\td (z\td y))}_{\mathfrak{S}(y\td (x\td z))} - \underbrace{\mathfrak{S}([y, z]\td x)}_{\mathfrak{S}([x,y]\td z)} -\mathfrak{S}((x\td \tors{})(y,z))\\
        &= \mathfrak{S}\Big(\underbrace{x\td (y\td z) - y\td (x\td z) - [x,y]\td z}_{\curv{}(x,y,z)}\Big) -\mathfrak{S}\Big((x\td \tors{})(y,z) \Big)
    \end{align*}

\end{proof}

\begin{proof}[Proof of Lemma \ref{lem: Der alg. ==> Der Lie alg.}] The proof is a simple computation where we first use the hypothesis $x\tr \cdot \in \Der(L,\td)$ and then we group the terms:
\begin{align*}
    x\tr [y, z]_{\diamond}&= x\tr(y\td z) - x\tr(z\td y)\\
    &= (x\tr y)\td z + y\td (x\tr z) - (x\tr z)\td y - z\td (x\tr y)\\
    &= \Big((x\tr y)\td z-z\td (x\tr y)\Big) + \Big(y\td (x\tr z)- (x\tr z)\td y\Big)\\
    &=[x\tr y,z]_{\diamond} + [y,x\tr z]_{\diamond}
\end{align*}
\end{proof}

\begin{proof}[Proof of Lemma \ref{lemma: constant torsion post-Lie identity}]
Since $(L,\tr,[\cdot,\cdot])$ is a post-Lie algebra we have that $X\tr \in \Der(L,[\cdot,\cdot])$ and since we have supposed that the equality \eqref{eq: compatibility derivation} is satisfied the Lemma \ref{lem: Der alg. ==> Der Lie alg.} indicates that also $X~\tr \in \Der(L,[\cdot,\cdot]_{\diamond})$, thus we have:
\begin{align*}
    x\tr\lb y,z\rb&=x\tr [y, z] - x\tr [y, z]_{\diamond}\\
    &=[x\tr y, z]+[y, x\tr z] - [x\tr y, z]_{\diamond} - [y, x\tr z]_{\diamond}
\end{align*}

Then, recombining the terms, and using the fact that $\td\tors{\td}=0$ is equivalent to
$$x\td\lb y, z \rb=\lb x\td y, z \rb + \lb y, x\td z \rb,$$ we finally get:
\begin{align*}
    x\tr\lb y,z\rb&= x\tr\lb y,z\rb + x\td\lb y,z\rb\\
    &=\underbrace{[x\tr y, z ]-[x\tr y, z]_{\diamond}}_{\lb x\tr y, z \rb}+ \underbrace{[y,x\tr z] - \lb y, x\tr z \rb_{\td}}_{\lb y,x\tr z \rb} + x\td\lb y,z\rb\\
    &=\underbrace{\lb x\tr y, z \rb+ \lb x\td y, z \rb}_{\lb x\tr y + x\td y, z \rb} + \underbrace{\lb y,x\tr z \rb + \lb y, x\td z \rb}_{\lb y,x\tr z + x\td z\rb}\\
    &=\lb x\tr y,z\rb+\lb y,x\tr z\rb
\end{align*}
\end{proof}

\begin{proof}Proof of Lemma \ref{lemma: curvature post-Lie identity}] We start expanding $\ass_{\btr}(x,y,z)$ the following way:
$$x\tr (y\btr z)= \underbrace{x\tr (y \tr z)}_{E_1(x,y,z)} + \underbrace{x\tr (y\td z)}_{E_2(x,y,z)} + \underbrace{x\td (y\tr z)}_{E_3(x,y,z)} + \underbrace{x\td (y\td z)}_{E_4(x,y,z)}$$
$$(x\tr y)\btr z = \underbrace{(x\tr y) \tr z}_{E_1'(x,y,z)} + \underbrace{(x\tr y)\td z}_{E_2'(x,y,z)} + \underbrace{(x\td y)\tr z}_{E_3'(x,y,z)} + \underbrace{(x\td y)\td z}_{E_4'(x,y,z)}$$
    Then:
    $$\ass_{\btr}(x,y,z)=(E_1-E_1' + E_2-E_2' + E_3-E_3' + E_4-E_4')(x,y,z)$$
    In the computation of $\ass_{\btr}(x,y,z)-\ass_{\btr}(y,x,z)$, we following arises:
    \begin{itemize}
        \item The terms in $E_2,E_2',E_3,E_3'$ vanish: indeed the compatibility condition \eqref{eq: compatibility derivation} that $y \tr \cdot~\in \Der(L,\td)$, which implies:
        \begin{equation*}
            E_3(x,y,z)-\Big(E_2-E_2'\Big)(y,x,z)=x\td (y\tr z) - y\tr (x\td z) + (y\tr x)\td z=0
        \end{equation*}
    Reversing $X$ and $Y$, the same compatibility condition \eqref{eq: compatibility derivation} is also:
    $$\Big(E_2-E_2'\Big)(x,y,z)-E_3(y,x,z)=0$$
        \item We also have by linearity:
    $$E_3'(y,x,z)-E_3'(x,y,z)=-[x, y]_{\diamond}\tr z$$
        \item Since $(L,\tr,[\cdot,\cdot])$ is supposed to be a post-Lie algebra, we have that:
        \[(E_1-E_1')(x,y,z)-(E_1-E_1')(y,x,z)=\ass_{\tr}(x,y,z)-\ass_{\tr}(y,x,z)=[x,y] \tr z\]
        \item By definition of the curvature tensor \eqref{eq: curvature}, we have that:
    \begin{align*}
        (E_4-E_4')(x,y,z)-(E_4-E_4')(x,y,z)&=\ass_{\td}(x,y,z)-\ass_{\td}(y,x,z)\\
        &=\curv{\td}(x,y,z)+[x,y] \td z-[x,y]_{\diamond}\td z
    \end{align*}
    \end{itemize}
    
    We finally, regrouping the terms and using linearity, we obtain the desired equality:
    \begin{align*}
        &\ass_{\btr}(x,y,z)-\ass_{\btr}(y,x,z)\\
        &=\underbrace{[x,y] \tr z + [x,y] \td z}_{[x,y] \btr z} - (\underbrace{[x,y]_{\diamond}\td z + [x,y]_{\diamond}\tr z}_{[x,y]_{\diamond}\btr z}) + \curv{\td}(x,y,z)\\
        &=([x,y]-[x,y]_{\diamond})\btr z + \curv{\td}(x,y,z)\\
        &=\lb x,y \rb\btr z + \curv{\td}(x,y,z)
        \end{align*}
    \end{proof}

\subsection{Post-Lie deformation conditions in coordinates.}\label{subsec: PL deformation in coordinates}
We consider as in Section \ref{sec: torsion and curvature} a Lie algebra $(L,[\cdot,\cdot])$ where $L$ is endowed with a basis $\{x_i\}_{i\in I}$ indexed by a set $I$. Given a bilinear operation $\td:L\otimes L\to L$ (the connection), it can be expressed in coordinates along the basis $\{x_i\}_{i\in I}$, as well as the Lie bracket $[\cdot,\cdot]$:
\begin{equation*}
    x_i\td x_j = \sum_{m\in I} \gamma_{i,j}^m x_m \qquad\text{and} \qquad [x_i,x_j] = \sum_{m\in I} \delta_{i,j}^m x_m
\end{equation*}
where the two sums are finite with real coefficients $\gamma_{i,j}^m,\delta_{i,j}^m\in \R$. \\

We recall the notion of torsion $\tors{\td,[\cdot,\cdot]}$ and curvature $\curv{\td,[\cdot,\cdot]}$ of $\td$ on $(L,[\cdot,\cdot])$ given by equalities \eqref{eq: torsion} and \eqref{eq: curvature} and we denote
\[\gamma_{[i,j]}^m:=\gamma_{i,j}^m-\gamma_{j,i}^m \qquad \text{and}\qquad \delta_{[i,j]}^m:=\delta_{i,j}^m-\delta_{j,i}^m\]
We get in coordinates the following decompositions, first for the torsion

$$\tors{\td,[\cdot,\cdot]}(x_i,x_j):=[x_i,x_j]_{\diamond}-[x_i,x_j]=\sum_{m\in I} (\gamma_{[i,j]}^m-\delta_{i,j}^m) x_m$$

then for the covariant derivative of the torsion
\begin{align*}
    (x_i\td&\tors{\td,[\cdot,\cdot]})[x_j,x_k]\\
    :=&x_i\td\left(\tors{\td,[\cdot,\cdot]}[x_j,x_k]\right) - \tors{\td,[\cdot,\cdot]}[x_i\td x_j,x_k] - \tors{\td,[\cdot,\cdot]}[x_j,x_i\td x_k]\\
    =&x_i\td [x_j,x_k]_{\diamond}-[x_i\td x_j,x_k]_{\diamond}-[x_j,x_i\td x_k]_{\diamond}\\
    &-x_i\td[x_j,x_k]_\circ+[x_i\td x_j,x_k]_\circ+[x_j,x_i\td x_k]_\circ\\
    =&\sum_{l,m}\Big(\gamma_{i,l}^m\gamma_{[j,k]}^l - \gamma_{i,j}^l\gamma_{[l,k]}^m  - \gamma_{[j,l]}^m\gamma_{i,k}^l -\gamma_{i,l}^m\delta_{j,k}^l  + \gamma_{i,j}^l\delta_{l,k}^m  + \delta_{j,l}^m\gamma_{i,k}^l\Big) x_m\\
    =&\sum_{l,m}\Bigg(\gamma_{i,l}^m\Big(\gamma_{[j,k]}^l-\delta_{j,k}^l\Big) - \gamma_{i,j}^l\Big(\gamma_{[l,k]}^m - \delta_{l,k}^m \Big)  - \Big(\gamma_{[j,l]}^m - \delta_{j,l}^m\Big)\gamma_{i,k}^l\Bigg) x_m
\end{align*}
and finally for the curvature tensor:
\begin{align*}
    \curv{\td,[\cdot,\cdot]}[x_i,x_j,x_k]&:=x_i\td (x_j\td x_k)-x_j\td (x_i\td x_k)-[x_i,x_j]_\circ\td x_k\\
    &=\sum_{l,m\in I}\Big(\gamma_{i,l}^m\gamma_{j,k}^l-\gamma_{j,l}^m\gamma_{i,k}^l-\delta_{i,j}^l\gamma_{l,k}^m\Big)x_m
\end{align*}

\begin{prop}\label{prop: conditions coordinates} Considering conditions of null torsion, null torsion derivative or null curvature, we obtain the following homogene polynomial equations in $\R[\{\gamma_{i,j}^k,\delta_{i,j}^k\}_{i,j,k\in I}]$:
    \begin{itemize}
        \item $\tors{\td,[\cdot,\cdot]}=0 \iff \forall (i,j,m)\in I^3$:
            \begin{equation}\label{eq: null torsion coordinates}
            \gamma_{[i,j]}^m=\delta_{i,j}^m
            \end{equation}
        \item $\td\tors{\td,[\cdot,\cdot]}=0 \iff \forall (i,j,k,m)\in I^4$:
            \begin{equation}\label{eq: torsion condition coordinates}
            \sum_{l\in I}\gamma_{i,l}^m\Big(\gamma_{[j,k]}^l-\delta_{j,k}^l\Big) - \gamma_{i,j}^l\Big(\gamma_{[l,k]}^m - \delta_{l,k}^m \Big)  - \Big(\gamma_{[j,l]}^m - \delta_{j,l}^m\Big)\gamma_{i,k}^l=0
            \end{equation}
        \item $\curv{\td,[\cdot,\cdot]}=0 \iff \forall (i,j,k,m)\in I^4$:
            \begin{equation}\label{eq: curvature condition coordinates}
            \sum_{l\in I}\gamma_{i,l}^m\gamma_{j,k}^l-\gamma_{j,l}^m\gamma_{i,k}^l-\delta_{i,j}^l\gamma_{l,k}^m=0
            \end{equation}
    \end{itemize}
\end{prop}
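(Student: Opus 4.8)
The plan is to reduce each of the three stated equivalences to the elementary principle that a multilinear map into $L$ vanishes identically precisely when all of its structure coefficients relative to the basis $\{x_m\}_{m\in I}$ vanish. The substantial part of the work — the explicit expansion of the torsion $\tors{\td,[\cdot,\cdot]}$, of its covariant derivative $\td\tors{\td,[\cdot,\cdot]}$, and of the curvature $\curv{\td,[\cdot,\cdot]}$ in terms of the structure constants $\gamma_{i,j}^m$ and $\delta_{i,j}^m$ — has already been carried out in the displays immediately preceding the statement. Thus the proof amounts to formalising how those coordinate expansions yield the stated polynomial equations.

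First I would record the multilinearity reduction. From the defining formulas \eqref{eq: torsion}, \eqref{eq: covariant derivative torsion} and \eqref{eq: curvature}, the map $\tors{\td,[\cdot,\cdot]}$ is bilinear, while $\td\tors{\td,[\cdot,\cdot]}$ and $\curv{\td,[\cdot,\cdot]}$ are trilinear. Consequently each of these maps vanishes on all of $L^{\otimes 2}$, respectively $L^{\otimes 3}$, if and only if it vanishes on every tuple of basis elements $(x_i,x_j)$, respectively $(x_i,x_j,x_k)$. This converts the global conditions $\tors{\td,[\cdot,\cdot]}\equiv 0$, $\td\tors{\td,[\cdot,\cdot]}\equiv 0$ and $\curv{\td,[\cdot,\cdot]}\equiv 0$ into families of conditions indexed by basis tuples.

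Next I would invoke the linear independence of $\{x_m\}_{m\in I}$: a vector $\sum_{m\in I} c_m\,x_m$ equals $0$ in $L$ if and only if every coefficient $c_m$ vanishes. Applying this to the coordinate expansions already displayed — $\tors{\td,[\cdot,\cdot]}(x_i,x_j)=\sum_m(\gamma_{[i,j]}^m-\delta_{i,j}^m)x_m$ for the torsion; the corresponding $\sum_{l,m}(\cdots)x_m$ expression for its covariant derivative, whose coefficient of $x_m$ is the sum over $l$; and $\sum_{l,m}(\gamma_{i,l}^m\gamma_{j,k}^l-\gamma_{j,l}^m\gamma_{i,k}^l-\delta_{i,j}^l\gamma_{l,k}^m)x_m$ for the curvature — I read off the coefficient of each $x_m$ and set it to zero for all admissible indices. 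For the torsion this ranges over $(i,j,m)\in I^3$ and yields \eqref{eq: null torsion coordinates}; for the other two it ranges over $(i,j,k,m)\in I^4$ and yields \eqref{eq: torsion condition coordinates} and \eqref{eq: curvature condition coordinates} respectively.

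Since each step — the multilinearity reduction and the linear independence argument — is an equivalence read in both directions, the three biconditionals follow at once. I do not expect a genuine obstacle here: the only point deserving care is that the preceding displays, obtained by a direct bilinear expansion against the structure constants, are correct index by index. In particular one should verify that the regrouping in the covariant-derivative computation, which factors the torsion coefficients $\gamma_{[\cdot,\cdot]}^{\cdot}-\delta_{\cdot,\cdot}^{\cdot}$ out of the expanded sum, is legitimate; once these expansions are granted, the proposition is immediate.
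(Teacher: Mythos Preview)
Your proposal is correct and matches the paper's approach exactly: the paper does not supply a separate proof for this proposition, treating it as immediate from the coordinate expansions displayed just before the statement, and your write-up simply makes explicit the two trivial steps (multilinearity reduction to basis tuples, then linear independence of the $x_m$) that turn those expansions into the claimed equivalences.
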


\begin{remark}
    After fixing a basis of $L$, all geometric post-Lie deformation are characterised by the solutions of the polynomial equations \ref{eq: null torsion coordinates}, \ref{eq: torsion condition coordinates} and \ref{eq: curvature condition coordinates} in $\R[\{\gamma_{i,j}^k,\delta_{i,j}^k\}_{i,j,k\in I}]$. We mention the Gröbner basis iterative method for solving polynomial equations, see \cite{buchberger1970algorithmisches} and \cite{aschenbrenner2008algorithm}.
\end{remark}

We consider now the context of Section \ref{sec: deformation post-Lie alg of derivations} where $L=\A\otimes \D$, with an associative and commutative algebra $(\A,\cdot)$ and a subspace of the space of derivations $\D\subset Der(\A)$ that is stable by $[\cdot,\cdot]_\circ$. Throughout this subsection, we fix a basis $\{D_i\}_{i\in I}$ indexed by a set $I$. Given a bilinear operation $\td:\D\otimes\D\to\D$, it can be expressed as before in coordinates along the basis $\{D_i\}_{i\in I}$, as well as the commutator bracket $[\cdot,\cdot]_\circ$:
\begin{equation*}
    D_i\td D_j = \sum_{m\in I} \gamma_{i,j}^m D_m \qquad\text{and} \qquad [D_i,D_j]_\circ = \sum_{m\in I} \delta_{i,j}^m D_m
\end{equation*}
where the two sums are finite with real coefficients $\gamma_{i,j}^m,\delta_{i,j}^m\in \R$. Note that by anti-commutativity $\delta_{i,j}^m=-\delta_{j,i}^m$. Then we can express the structure $(\btr,\lb\cdot,\cdot\rb)$ given by the equalities \eqref{eq: post-lie product geometric derivation alg} and \eqref{eq: post-lie bracket geometric derivation alg} along the basis, which gives:
\begin{align*}
    (a\otimes D_i)\btr (b\otimes D_j)&=a D_i(b)\otimes D_j + \sum_{l\in I} ab\otimes \gamma_{i,j}^l D_l\\
\lb a\otimes D_i,b\otimes D_j\rb&=\sum_{l\in I} ab\otimes (\delta_{i,j}^l-\gamma_{i,j}^l) D_l
\end{align*}

Combining Theorem \ref{th: post-Lie conditions on A tens Der(A)} and Proposition \ref{prop: conditions coordinates} we easily obtain:
\begin{prop}
If the coefficients $\gamma$ and $\delta$ satisfy \eqref{eq: null torsion coordinates} and \eqref{eq: curvature condition coordinates}, then $(\A\otimes \D,\btr)$ is a pre-Lie algebra.\\
If the coefficients $\gamma$ and $\delta$ satisfy \eqref{eq: torsion condition coordinates} and \eqref{eq: curvature condition coordinates}, then $(\A\otimes \D,\btr,\lb\cdot,\cdot\rb)$ is a post-Lie algebra.
\end{prop}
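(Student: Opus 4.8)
The plan is to read the two displayed systems of polynomial equations through the dictionary provided by Proposition \ref{prop: conditions coordinates}, thereby reducing both assertions to results already established in coordinate-free form, namely Theorem \ref{th: post-Lie conditions on A tens Der(A)} and Corollary \ref{coro: pre-Lie deformation for post-Lie alg of der.}.

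For the second bullet, I would first observe that, applying Proposition \ref{prop: conditions coordinates} to the Lie algebra $(\D,[\cdot,\cdot]_\circ)$ equipped with the bilinear operation $\td$, equation \eqref{eq: torsion condition coordinates} is exactly the coordinate transcription of $\td\tors{\td,[\cdot,\cdot]_\circ}\equiv 0$ and equation \eqref{eq: curvature condition coordinates} is the coordinate transcription of $\curv{\td,[\cdot,\cdot]_\circ}\equiv 0$. These are precisely the hypotheses of Theorem \ref{th: post-Lie conditions on A tens Der(A)} (the stability of $\D$ under $[\cdot,\cdot]_\circ$ being assumed throughout), so that theorem yields at once that $(\A\otimes\D,\btr,\lb\cdot,\cdot\rb)$ is a post-Lie algebra; recall that the compatibility requirement $x\tr\cdot\in\Der(\A\otimes\D,\td)$ used there is automatic in this setting, as checked in the proof of Theorem \ref{th: post-Lie conditions on A tens Der(A)}.

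For the first bullet, Proposition \ref{prop: conditions coordinates} identifies \eqref{eq: null torsion coordinates} with the vanishing of the torsion $\tors{\td,[\cdot,\cdot]_\circ}$ on $\D$; equivalently the commutator $[\cdot,\cdot]_{\diamond}$ of $\td$ coincides with $[\cdot,\cdot]_\circ$, whence $\lb\cdot,\cdot\rb=[\cdot,\cdot]_\circ-[\cdot,\cdot]_{\diamond}\equiv 0$ on $\A\otimes\D$. Once the torsion is the zero map its covariant derivative is the zero tensor as well, either directly from \eqref{eq: covariant derivative torsion} or from the remark that under \eqref{eq: null torsion coordinates} every parenthesis occurring in \eqref{eq: torsion condition coordinates} vanishes; thus \eqref{eq: null torsion coordinates} already forces \eqref{eq: torsion condition coordinates}. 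Adjoining \eqref{eq: curvature condition coordinates} we are back in the case treated above, and the resulting post-Lie algebra $(\A\otimes\D,\btr,\0)$ with trivial bracket is by definition the pre-Lie algebra $(\A\otimes\D,\btr)$. Alternatively, identity \eqref{eq: curvature vs torsion} shows that under vanishing torsion \eqref{eq: curvature condition coordinates} is equivalent to $\ass_{\td}$ being symmetric in its first two slots, i.e. to $\td$ being a pre-Lie product on $\D$ with $[\cdot,\cdot]_{\diamond}=[\cdot,\cdot]_\circ$, so Corollary \ref{coro: pre-Lie deformation for post-Lie alg of der.} applies verbatim.

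I do not anticipate any genuine obstacle: the entire content is the translation supplied by Proposition \ref{prop: conditions coordinates}. The only step that deserves a moment of care is the bookkeeping in the pre-Lie case, namely verifying that the null-torsion equations \eqref{eq: null torsion coordinates} do entail the null-torsion-derivative equations \eqref{eq: torsion condition coordinates}, so that the coordinate-free statements can legitimately be invoked.
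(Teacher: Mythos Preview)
Your proposal is correct and follows exactly the paper's own route: the paper simply states that the proposition is obtained ``combining Theorem \ref{th: post-Lie conditions on A tens Der(A)} and Proposition \ref{prop: conditions coordinates}'', which is precisely the translation-and-invoke argument you spell out. Your additional remark that \eqref{eq: null torsion coordinates} forces \eqref{eq: torsion condition coordinates} and hence $\lb\cdot,\cdot\rb\equiv 0$ is the natural way to extract the pre-Lie assertion from the post-Lie one, and the alternative via Corollary \ref{coro: pre-Lie deformation for post-Lie alg of der.} is equally valid.
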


In regularity structures, if we consider the space $L_0$ defined by \eqref{eq: post-Lie algebra L_0}, the equations \ref{eq: null torsion coordinates}, \ref{eq: torsion condition coordinates} and \ref{eq: curvature condition coordinates} simplifies, indeed in \cite{LOT} and \cite{jacques2023post}, $\D$ is the space freely generated by the familly of derivations:
$$\B_\D:=\{\partial_i\}_{i\in\{1,\ldots,d\}}\cup \{D^{(\n)}\}_{\n\in\N^d}.$$
We have that for all $i\in\{1,\ldots,n\}$ and $\n=(n_1,\ldots,n_d)\in\N^d\setminus \{\0\}$:
    \begin{equation*}
        [D^{(\n)}, \partial_i]_\circ=n_iD^{(\n-\e_i)}.
    \end{equation*}
    Thus:
    $$\delta_{\n,i}^{\m}=\left\{
    \begin{array}{ll}
    n_i \qquad \text{if}~ \m=\n-\e_i,\\
    0\qquad \text{else}
    \end{array}
    \right.$$
    And since $[\partial_i,\partial_j]_\circ=[\partial_i,D^{(\0)}]_\circ=[D^{(\n)},D^{(\m)}]_\circ=0$ we have that:
    \[\delta_{i,j}^k=\delta_{i,j}^{\n}=\delta_{\p,\q}^\n=\delta_{\p,\q}^i=\delta_{\0,i}^{j}=\delta_{\0,i}^{\n}=0\]

\medskip

\subsection{Link between geometric post-Lie deformations of the derivation post-Lie algebra and the Munthe-Kaas--Lundervold post-Lie algebra}\label{subsec: appendix M-K--L post-Lie alg}
Assuming that $\A$ is an unitary algebra, let us denote by $\1$ the unit element of $\A$. We consider the post-Lie algebra structure $(\A\otimes \Der(\A),\btr,\lb\cdot,\cdot\rb)$ given in Theorem \ref{th: post-Lie conditions on A tens Der(A)}, which is the $\td$-gpL deformation of the canonical post-Lie algebra $(\A\otimes \Der(\A),\tr,[\cdot,\cdot])$ in \cite{jacques2023post} defined in Theorem \ref{theo: post-Lie structure from derivations}. We have a canonical injective morphism of post-Lie algebras:
$$\writefun{\eta}{(\Der(\A),\td,-\tors{\td,[\cdot,\cdot]_\circ})}{(\A\otimes \Der(\A),\btr,\lb\cdot,\cdot\rb)}{X}{\1 \otimes X}$$
The morphism property is a simple fact that the unit element $\1$ satifies that $D(\1)=0$ for all $D\in\Der(\A)$, hence we have:
\begin{align*}
    \eta(D_1)\btr\eta(D_2)&= (\1 \otimes D_1)\btr (\1 \otimes D_2)\\
    &=(\1 \otimes D_1)\tr(\1 \otimes D_2)+(\1 \otimes D_1)\td(\1 \otimes D_2)\\
    &=\underbrace{D_1(\1)\otimes D_2}_{=0} + \underbrace{\1\otimes D_1 \td D_2}_{=\eta(D_1 \td D_2)}
\end{align*}

\begin{align*}
    \eta(-\tors{\td,[\cdot,\cdot]_\circ}(D_1,D_2))&=\eta([D_1,D_2]_\circ-[D_1,D_2]_{\diamond})\\
    &=\1\otimes [D_1,D_2]_\circ - \1\otimes [D_1,D_2]_{\diamond}\\
    &=[\1\otimes D_1,\1\otimes D_2] - [\1\otimes D_1,\1\otimes D_2]_{\diamond}\\
    &=\lb \underbrace{\1\otimes D_1}_{=\eta(D_1)},\underbrace{\1\otimes D_2}_{\eta(=D_2)}\rb
\end{align*}

Now, if we consider a smooth differential manifold $\M$ and we specify $\A:=\mathcal C^\infty(\M)$ endowed with the pointwise product and the unit element $\1$ is given by the constant function equal to $1$: $\1(x)=1$ for all $x\in \M$, and we consider a connection $\nabla$ which has constant torsion and null curvature, then we can consider the post-Lie algebra $(\mathcal C^\infty(\M)\otimes \mathfrak X(\M),\btr,\lb\cdot,\cdot\rb)$ where the product $\td$ is given by the connection: 
\[X\td Y:= \nabla_X Y\]
Then if we consider the Munthe-Kaas-Lundervold post-Lie algebra\\ $(\mathfrak X(\M),\nabla,-\tors{\nabla,[\cdot,\cdot]_J})$, where $[\cdot,\cdot]_J$ denote the Jacobi-Lie bracket of vector fields, we have a canonical injective morphism of post-Lie algebra, which is given by:
\[\writefun{\eta}{(\mathfrak X(\M),\nabla,-\tors{\nabla,[\cdot,\cdot]_J})}{(\mathcal C^\infty(\M)\otimes \mathfrak X(\M),\btr,\lb\cdot,\cdot\rb)}{X}{\1 \otimes X}\]

For the opposite way, it is easy to prove that the representation  map $\rho:\A\otimes \Der(\A)\to \Der(\A)$ given as before by:
\[\rho(a\otimes D)=a\cdot D,\]
is in that present context, a surjective post-Lie algebra morphism:
$$\writefun{\rho}{(\mathcal C^\infty(\M)\otimes \mathfrak X(\M),\btr,\lb\cdot,\cdot\rb)}{(\mathfrak X(\M),\nabla,-\tors{\nabla,[\cdot\cdot]_J})}{f\otimes X}{f\cdot X}$$
Indeed:
\begin{align*}
    \rho\Big((f_1\otimes X_1)\btr(f_2\otimes X_2)\Big)&=f_1X_1(f_2)X_2 + f_1f_2\nabla_{X_1}X_2\\
    &=f_1X_1(f_2)X_2 + f_2\nabla_{f_1X_1}X_2\\
    &=\nabla_{f_1X_1} (f_2 X_2)\\
    &=\nabla_{\rho(f_1\otimes X_1)} \Big(\rho(f_2\otimes X_2)\Big)
\end{align*}
And in that case, $\eta$ is a section of $\rho$ in the sense that:
$$\rho\circ \eta = \ind_{\mathfrak X(\M)}$$
\medskip

\printbibliography
\end{document}